\newif\if@restonecol
\newtheorem{theorem}{Theorem}[section]
  \newtheorem{lemma}[theorem]{Lemma}
   \newtheorem{corollary}[theorem]{Corollary}
  \newtheorem{proposition}[theorem]{Proposition}
 \newtheorem{definition}[theorem]{Definition}
   \newtheorem{remark}[theorem]{Remark}
\theoremstyle{definition}
\newtheorem{exampleth}[theorem]{Example}
\newenvironment{example}{\begin{exampleth}}{\hfill $\diamond$\\ \end{exampleth}}
\newcommand \NN {\mathbb{N}} 
\newcommand \QQ {\mathbb{Q}} 
\newcommand \ZZ {\mathbb{Z}}
\newcommand \CC {\mathbb{C}}
\newcommand \RR {\mathbb{R}}
\newcommand \pr {\mathbb{P}}
\newcommand \codim{\textrm{codim}}
\newcommand \Sec {\textrm{Sec}}
\newcommand \Sn {\mathbb{S}}
\newcommand \init {\textrm{in}}
\newcommand \NP {\textrm{NP}}
\newcommand \cT {\mathcal{T}}
\newcommand \cP {\mathcal{P}}
\newcommand \ww {\omega}
\newcommand \TP {\mathbb{T}}
\newcommand \gfan {\texttt{gfan}}
\newcommand \aaa {\underline{a}}
\newcommand \ap {\underline{a}'}
\newcommand{\val}{\text {val}}
\newcommand{\refl}{\text{refl}}
\author{Mar\'ia Ang\'elica Cueto}
\address{Mathematics Department,
  Columbia University, MC 4403, New York, NY, 10027, USA}
\email{macueto@math.columbia.edu}
\author{Shaowei Lin}
 \address{Department of Mathematics,
  University of California, Berkeley, CA 94720, USA}
\email{shaowei@math.berkeley.edu}
\thanks{M.A.\ Cueto was
    supported by a UC Berkeley Chancellor's Fellowship and S.\ Lin was supported by a Singapore A*STAR Fellowship.}
\title{Tropical secant graphs of monomial curves}
\keywords{Monomial curves,  secant varieties, resolution graphs, tropical implicitization, Newton polytope}
\subjclass[2010]{14Q05, 14T05 (Primary); 14M25 (Secondary)}
\begin{document}

\begin{abstract}
  The first secant variety of a projective monomial curve is a
  threefold with an action by a one-dimensional torus. Its
  tropicalization is a three-dimensional fan with a one-dimensional
  lineality space, so the tropical threefold is represented by a
  balanced graph. Our main result is an explicit construction of that
  graph.  As a consequence, we obtain algorithms to effectively compute
  the multidegree and Chow polytope of an arbitrary projective
  monomial curve. This generalizes an earlier degree formula due to
  Ranestad. The combinatorics underlying our construction is rather
  delicate, and it is based on a refinement of the theory of geometric
  tropicalization due to Hacking, Keel and Tevelev.
\end{abstract}

\maketitle 

\section{Introduction}
\label{sec:introduction}

In this paper, we define and study four graphs that hide rich
geometry: an abstract graph (the \emph{abstract tropical secant
  surface graph}), a weighted graph in $\RR^{n+1}$ (the \emph{tropical
  secant surface graph} or \emph{master graph}), a weighted graph in
the $(n-2)$-sphere (the \emph{tropical secant graph}) and, finally, a
weighted graph representing a simplicial spherical complex 
(the \emph{Gr\"obner tropical secant graph}). 
All four graphs are parameterized by a sequence of $n$ coprime
distinct positive integers $i_1, \ldots, i_n$, where $n\geq 4$. As
their names suggest, these graphs are stepping stones to constructing
either a tropical surface or the tropicalization of a secant
variety.

In recent years, tropical geometry has provided a new approach to
attack implicitization problems \citep{TropDiscr, Mega09, ElimTheory,
  NPofImplicitEquation}. 
We tropicalize classical varieties
to obtain weighted polyhedral fans with the hope of recovering useful
algebro-geometric information by working on the polyhedral-geometric
side. Our paper illustrates this principle with a family of classical
secant threefolds: the first secant variety of a monomial projective curve
$(1: t^{i_1}: \ldots: t^{i_n})$.  By definition, the secant variety of
the curve is the closure of the union of all lines that meet the curve
in two distinct points. These varieties have been studied extensively
in the literature; see \citep{CoxSidman07, SecantMonCurves} and
references therein for more details. One of the main contributions of
this paper is a complete characterization of their tropical
counterparts, which is carried out in full detail in
 Section~\ref{sec:trop-secant-graph}. More precisely,
\begin{theorem}\label{thm:MainThm}
  Given a monomial curve $C$ in $ \pr^n$ parameterized by $n$ distinct
  coprime positive integers $\{i_1, \ldots, i_n\}$, the
  tropicalization of its first secant variety is the cone from the
  vector space $\RR\langle \mathbf{1}, (0, i_1, \ldots,
  i_n)\rangle$ over the tropical secant graph of the curve.
\end{theorem}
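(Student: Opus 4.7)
The plan is to use the framework of geometric tropicalization of Hacking--Keel--Tevelev, adapted to the very affine setting that naturally contains the first secant variety $\Sigma$ of the monomial curve $C$. The two lineality directions $\mathbf 1$ and $(0,i_1,\ldots,i_n)$ are geometric in origin: the first corresponds to the $\CC^*$-scaling coming from projective coordinates, and the second to the one-parameter torus action on $\Sigma$ induced by $t\mapsto \lambda t$ on $C$. Quotienting by the two-dimensional subtorus whose cocharacter lattice is $\ZZ\langle\mathbf 1,(0,i_1,\ldots,i_n)\rangle$ turns $\Sigma$ into a very affine surface $\Sigma^\circ$ in a torus of dimension $n-1$, and it is the tropicalization of this surface we need to identify with the tropical secant graph.

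Next I would build an explicit smooth compactification $\overline{\Sigma^\circ}$ with simple normal crossing boundary. The rational parametrization of $\Sigma$ by the symmetric square of $C$ together with a $\pr^1$ of secant parameters gives a dominant map from a threefold birational to $\Sym^2(\pr^1)\times\pr^1$; after quotienting by the two-torus we obtain a rational surface that we must compactify and desingularize. The main input is to locate every boundary component, which fall into several families: divisors coming from the two ``secant parameters'' going to $0$ or $\infty$, divisors coming from pairs of points on $C$ collapsing (tangent directions, giving points of the tangential variety inside $\Sigma$), and divisors born from blow-ups that separate these behaviors at coincident indices $i_j$. Each boundary divisor will correspond to a vertex of the abstract tropical secant surface graph introduced at the start of the section.

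Once the SNC model is in hand, the geometric tropicalization theorem identifies the tropical fan of $\Sigma^\circ$ with the cone over the dual complex of the boundary: rays are indexed by divisors and are determined by the valuations these divisors induce on the coordinates of the ambient torus; two-dimensional cones correspond to intersection curves of two boundary divisors. The coordinates of each ray are therefore read off combinatorially from how the chosen parametrization degenerates along the corresponding boundary divisor, which is the recipe already encoded in the tropical secant (surface) graph. Weights on edges are computed as intersection multiplicities of the pair of boundary divisors, possibly summed over multiple components of the intersection, and are then pushed forward to the sphere $S^{n-2}$, matching the weight function on the tropical secant graph.

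The main obstacle is the construction and bookkeeping of the compactification: producing an SNC model by a careful sequence of blow-ups and identifying exactly which exceptional divisors survive as boundary of $\overline{\Sigma^\circ}$, while keeping track of the primitive ray generators and intersection multiplicities as explicit combinatorial functions of the sequence $(i_1,\ldots,i_n)$. Once that bookkeeping is in place, the balancing condition required by the structure theorem of tropical geometry is automatic from $\overline{\Sigma^\circ}$ being proper. I would therefore organize the proof as: (i) set up the very affine model and the lineality, (ii) exhibit the SNC compactification and enumerate its boundary divisors and curves, (iii) apply geometric tropicalization to identify rays, cones and multiplicities with those of the tropical secant graph, and (iv) verify that the resulting weighted graph is balanced, confirming it coincides with the graph constructed combinatorially in the earlier sections.
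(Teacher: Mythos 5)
Your route—quotient the secant variety by the two-dimensional subtorus with cocharacter lattice $\ZZ\langle \mathbf 1,(0,i_1,\ldots,i_n)\rangle$ and apply geometric tropicalization directly to the resulting very affine surface—is a legitimate repackaging of what the paper does, and in substance it would reduce to the same hard computation: after the quotient, the natural parameter space of your surface $\Sigma^\circ$ is the $(\lambda,\omega)$-plane, and the boundary you must resolve is exactly the binomial arrangement $\{\omega^{i_j}-\lambda=0\}$ in $\pr^2$, i.e.\ the resolution at the origin, at infinity, and at the roots-of-unity crossings that the paper carries out for the surface $Z$ (the paper instead keeps the secant variety intact, writes $Sec^1(C)=C\centerdot Z$ as a Hadamard product, uses $\cT Sec^1(C)=\cT C+\cT Z$, and transfers multiplicities by the push-forward formula for monomial maps). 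So the packaging differs, but the compactification bookkeeping you defer to step (ii) is not an incidental obstacle; it is the bulk of the proof, and your proposal leaves it entirely unspecified, including the identifications that occur only after quotienting (e.g.\ the collapse of the divisors at infinity onto the $E_{i_j}$ rays and of $F_{\{i_1,\ldots,i_n\}}$, $F_{\{0,i_1,\ldots,i_{n-1}\}}$ onto $E_{i_1}$, $E_{i_{n-1}}$), which is precisely what produces the composite gcd formulas for the edge weights in the tropical secant graph.

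The genuine gap is the degree of the parametrization. The weights in the tropical secant graph carry an explicit factor $\tfrac12$ (e.g.\ $m_{F_{\underline a},D_{i_j}}=\tfrac12\sum_r\varphi(r)\cdot\gcd(\cdots)$), which comes from the fact that the secant parametrization is generically $2:1$: the involution swapping the two points of $C$ on a secant line acts nontrivially on your parameter space, and on the quotient surface $\Sigma^\circ$ the induced $(\lambda,\omega)$-parametrization is generically two-to-one. To apply the geometric tropicalization weight formula you must know this degree $\delta$ exactly, and $\delta=2$ is equivalent to the statement that a generic point of $Sec^1(C)$ lies on a unique secant line meeting $C$ in exactly two points. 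This is not automatic for special exponent sequences and is proved in the paper by a genuinely nontrivial argument (reduction to $n=4$ and a coprimality analysis of generalized Vandermonde/Schur polynomial factorizations). Your proposal never addresses this trisecant-type statement, and passing to $\Sym^2(\pr^1)\times\pr^1$ does not dispose of it: either you still need generic injectivity of the induced map from the symmetric square (the same lemma), or, if you keep the unsymmetrized parametrization, you need $\delta=2$ explicitly. Without it, the multiplicities of the tropical secant graph—and hence the weighted statement of the theorem—cannot be established.
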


Strictly contained in this
tropical variety is the first tropical secant complex of the
monomial curve (Propositions~\ref{pr:TropSecantComplex}
and~\ref{pr:tropSecantVsTropicalizedSecant}). This complex has
recently been investigated by Develin and Draisma in~\cite{DevelinTropSecant,
  DraismaTropSecant} in an attempt to study 
secant
varieties of 
toric varieties via tropicalizations.


Unfortunately, computing the tropicalization of an algebraic variety
without any information about its defining ideal is not an easy
task. This new point of view was pioneered by the work of Kapranov and
his collaborators~\cite{EKL}, and further developed by Hacking, Keel
and Tevelev~\cite{HKT}, and by the first
author~\cite{Implicitizationsurfaces}.  This new theory, known as
\emph{geometric tropicalization} (Theorem~\ref{thm:GT}), relies
on a parametric representation of the variety and the characterization
of tropicalizations of algebraic varieties in terms of
\emph{divisorial valuations}, following the spirit of~\citep{BG}. To
do so, we need to provide a normal $\QQ$-factorial compactification of
the given variety, satisfying suitable boundary properties.  This can
be quite difficult to perform if the variety is non-generic. We can
see this from the extensive number of pages we devote to computing the
master graph using this technique, and also from the small sample of
numerical examples
available in the
literature.

As we explain in Section~\ref{sec:comb-monom-curv}, 
the main obstacle to
apply this theory for non-generic surfaces lies in finding a suitable
(tropical) compactification of the given variety whose boundary has
simple normal crossings
, a condition which can be further relaxed to \emph{combinatorial
  normal crossings}~\citep{Implicitizationsurfaces,
  ElimTheory}. In the surface case,  This last condition requires a 
 divisorial boundary
 where no three boundary components meet at a point.
In principal, this can be achieved by modifying any given
compactification by blow-ups of isolated surface
singularities, and the difficulty becomes algebraic, since we need to
carry all valuations along the different blow-ups.



In practice, knowing which points to blow up
and how to carry the geometric information on the boundary
along 
the
various blow-ups performed can be a combinatorial challenge.  However,
the surfaces studied in this paper 
have a very rich combinatorial structure, and we can make full use of
this feature to compute their tropicalizations using resolutions. Our
methods allow us to read off this tropical variety 
directly from the master graph, which encodes the resolution
diagrams of the surface at each singular point (see
Figure~\ref{fig:OriginAndInfinityAndRest}). This is explained in
detail in 
Sections~\ref{sec:tropical-geometry} and~\ref{sec:comb-monom-curv}, in
particular in Theorem~\ref{thm:MasterGraphIsTropical}. 
This construction provides a compactification of the toric arrangement
given by the $n+1$ binomial curves $(w^{i_j}-\lambda=0)$ in $\TP^2$,
for $0\leq j\leq n$.  Such compactifications have been studied
recently by L.\ Moci~\cite{MociPaper}. His construction, closely
related to ours in spirit, realizes the wonderful compactification of
De Concini and Procesi~\cite{Wonderful}.



In Section~\ref{sec:newt-polyt-secant}, we exploit these tropical
secant graphs to recover geometric information about the first secant
varieties of monomial curves. More precisely, we recover their
multidegrees with respect to the rank two lattice generated by the
all-one's vector and the exponent vectors of the curves, following
algorithms from~\cite{TropDiscr, Mega09}. The degree of these
threefolds was previously worked out by Ranestad
in~\citep{SecantMonCurves} and, unsurprisingly, our methods give
similar combinatorial formulas 
in terms of the
exponents $i_1, \ldots, i_n$. The main advantage of our approach is
that, with the same effort, we can provide much more information about
theses varieties, including their \emph{Chow polytopes}.

Our construction is particularly enlightening in the case when $n=4$,
where the threefold secant variety becomes a hypersurface. In this
special situation, we recover the Newton polytope of the defining
equation. Although the lack of a fan structure in our description of
this tropical variety is not an issue in our methods, it would be
desirable to have one to predict extra combinatorial information about
the Newton polytope, such as the number of facets. For this reason, we
devote the last part of Section~\ref{sec:newt-polyt-secant} to
refining the presentation of the tropical secant graphs to turn them into
weighted simplicial complexes. These structures are inherited from the
Gr\"obner fan structure of the defining ideals and
from the coarsest fan structure of the tropical threefolds. We choose
the name \emph{Gr\"obner tropical secant graph} to highlight this
property.  We illustrate all our constructions and results with the
sequence $\{30,45,55,78\}$, inspired by \cite[Example
3.3]{SecantMonCurves}, and with the rational normal curve
(Example~\ref{ex:RNC}).

Although secant varieties have been extensively studied in the past,
we hope our work illustrates the power of \emph{tropical implicitization}
and how it can be used to go beyond standard implicitization methods even when
looking at classical examples. 

\section{The master graph}
\label{sec:fundamental-graph}

In this section, we describe the main object of this paper: the master
graph. We start by defining an abstract graph parameterized by $n$
coprime positive integers $i_1, \ldots, i_n$.  Throughout the paper,
we set $n\geq 4$. To simplify notation, we call $i_0=0$ and we assume
$0<i_1<i_2<\ldots<i_n$.  We build this graph by gluing two types of
graphs along common labeled nodes: two caterpillar graphs $G_{E,D}$,
$G_{h,D}$ and a family of star graphs $\{G_{F_{\aaa},D}\}_{\aaa}$
parameterized by suitable subsets $\aaa$ of the index set $\{0, i_1,
\ldots, i_n\}$. 
We call this abstract graph
the \emph{abstract tropical secant surface graph}.

Our first building block is the caterpillar graph $G_{E,D}$, as
illustrated on the top-left side of
Figure~\ref{fig:OriginAndInfinityAndRest}. It consists of $2n-1$ nodes
and $2n-2$ edges. Nodes are grouped in two levels, with labels
$E_{i_1}, \ldots, E_{i_{n-1}}$ and $D_{i_1}, \ldots,
D_{i_n}$. Similarly, our second graph, denoted by $G_{h,D}$ and
depicted on the bottom-left side of
Figure~\ref{fig:OriginAndInfinityAndRest}, consists of $2n$ nodes
grouped in two levels with labels $h_{i_1}, \ldots, h_{i_{n-1}}$ and
$D_0, D_{i_1}, \ldots, D_{i_n}$ respectively, and $2n-1$ edges.

The third family of graphs consists of star trees and it is denoted by
$\{G_{F_{\aaa}, D}\}_{\aaa}$. These graphs are parameterized by sets 
  of size at least two, obtained by intersecting an arithmetic
  progression of integer numbers with the index set $\{0, i_1, \ldots,
  i_n\}$. They are
  illustrated in the rightmost picture in
  Figure~\ref{fig:OriginAndInfinityAndRest}. We allow the common
  difference of these progressions to be 1, so the set of all
  exponents $\{0, i_1, \ldots, i_n\}$ is a valid subset. The size of
  the subset $\aaa$ 
  associated to an
  arithmetic progression  coincides with the degree of the
  corresponding node $F_{\aaa}$ in the abstract graph.  Note that
  several arithmetic progressions can give the same subset of $\{0,
  i_1,\ldots, i_n\}$, and hence the same node $F_{\aaa}$ in the graph
  $G_{F_{\aaa}, D}$.  If $\aaa =\{i_{j_1}, \ldots, i_{j_k}\}$, then
  the graph has $k+1$ nodes and $k$ edges: $k$ nodes labeled
  $D_{i_{j_1}}, \ldots, D_{i_{j_k}}$ and a central node $F_{\aaa}$,
  connected to the other $k$ nodes in the graph.  As
  Example~\ref{ex:RanestadMaster} reveals, only nodes of degree at
  least three are relevant for our constructions, so in principle
  we should only consider subsets of size at least three. However, to
  simplify our statements, we allow subsets of
  size two as well.
\begin{figure}[htb]
  \centering
\hspace{-.3cm}
 \begin{minipage}[c]{.665\linewidth}
\hspace{1.05cm} \includegraphics[scale=0.35]{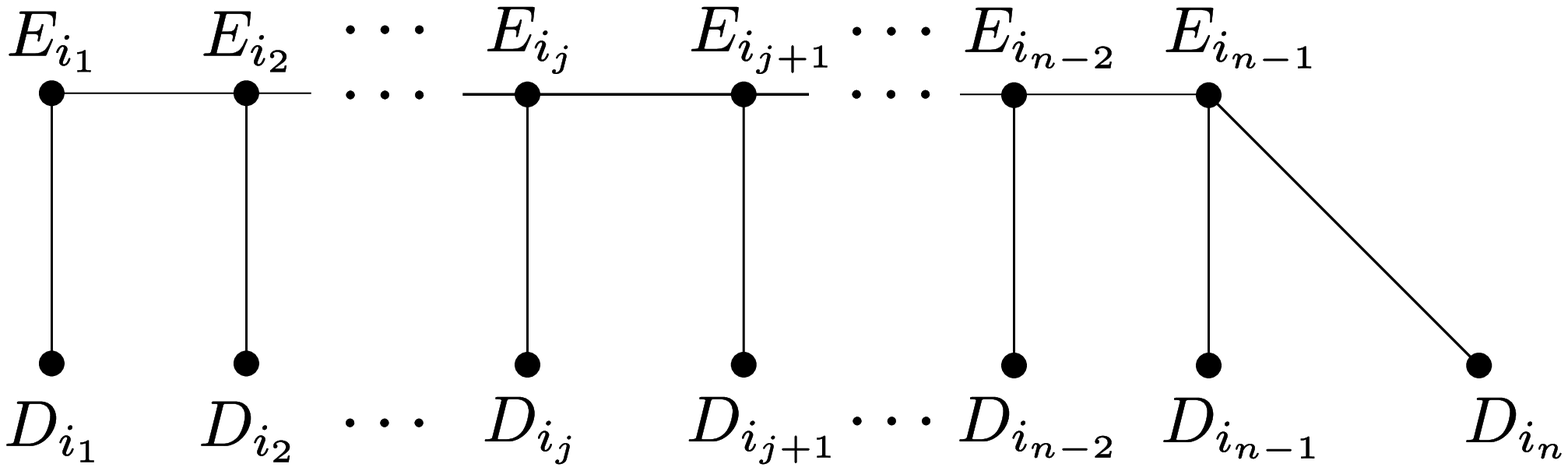}

 \vspace{3ex}

 \includegraphics[scale = 0.35]{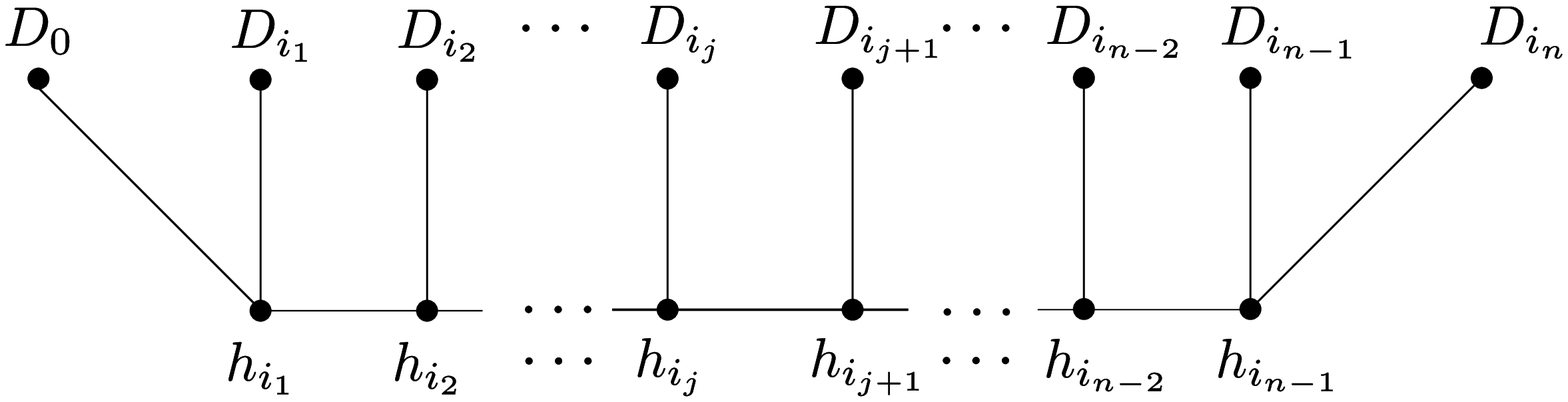}
 \end{minipage}
 \begin{minipage}[c]{.31\linewidth}
\begin{center}\;\;   \includegraphics[scale=0.32]{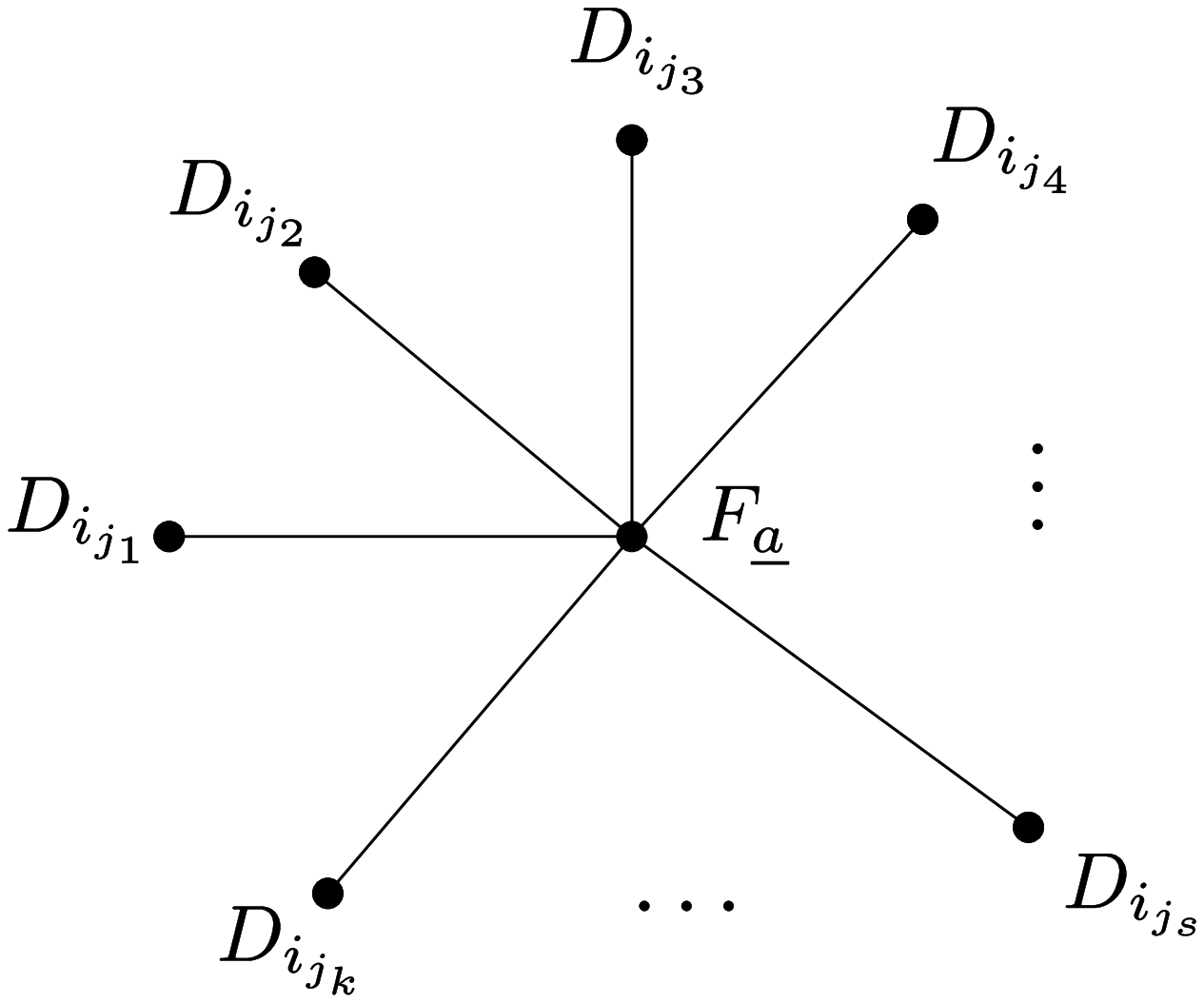}\\
\ \\

$\underline{a}=\{i_{j_1}, \ldots, i_{j_k}\}$
\end{center}
 \end{minipage}
  \caption{The graphs $G_{E,D}$, $G_{h,D}$ and $\{G_{F_{\aaa}, D}\}_{\aaa}$ glue
    together
    to form the \emph{master graph}.}
  \label{fig:OriginAndInfinityAndRest}
\end{figure}

We 
realize the abstract tropical secant surface graph in $\RR^{n+1}$ by
mapping each node to an integer vector and extending linearly on all
edges. Our chosen map has additional data: a weight for each edge in
the graph.  We call this weighted graph the \emph{tropical secant
  surface graph} or \emph{master graph}. We explain this construction
in full detail below. For a numerical example, see
Figure~\ref{fig:RanestadMaster}.

  \begin{definition}\label{def:masterGraph}
    The \emph{master graph} is a weighted graph in $\RR^{n+1}$
    parameterized by $n$ distinct coprime numbers $\{i_1, \ldots,
    i_n\}$ with nodes:
    \begin{enumerate}
    \item $D_{i_j}= {e_j}:=(0, \ldots, 0,
      1, 0, \ldots, 0)$ \qquad ($ 0\leq j\leq n$),
    \item $E_{i_j}=(0, i_1, \ldots, i_{j-1},
      i_j, \ldots, i_j)$ , $h_{i_j}=(-i_j, \ldots,
      -i_j, -i_{j+1}, \ldots, -i_n)$ \quad ($ 1\leq j\leq n-1$),
    \item $F_{\underline{a}}=\sum_{i_j\in \underline{a}}{ e_j}$\quad
      where $\underline{a}\subseteq\{0, i_1, \ldots, i_n\}$ has size
      at least two and is obtained by intersecting an arithmetic
      progression of integers with the index set $\{0, i_1, \ldots, i_n\}$.
    \end{enumerate}
Its edges agree with the edges of the abstract
tropical secant surface graph, and have weights:
\begin{enumerate}
\item $m_{D_{i_0}, h_{i_1}}\!=1$ , $m_{D_{i_n},
    E_{i_{n-1}}}\!\!=gcd(i_1, \ldots, i_{n-1})$ , $m_{D_{i_n},
    h_{i_{n-1}}}\!\!=i_n$,
\item $m_{D_{i_j},E_{i_j}}=\gcd(i_1, \ldots, i_j)$ , $m_{D_{i_j},
    h_{i_j}} = \gcd(i_j, \ldots, i_{n})$ \qquad ($ 1\leq j\leq n-1$),
    \item $m_{E_{i_j}, E_{i_{j+1}}}\!\! =\! \gcd(i_1, \ldots, i_j)$ ,
$m_{h_{i_j}, h_{i_{j+1}}} \!\!= \!\gcd(i_{j+1}, \ldots, i_n)$
     \;\; 
($ 1\leq j \leq n-2$),
\item $m_{F_{\underline{a}}, D_{i_j}}=\sum_r \varphi(r)$, where we sum
  over the common differences $r$ of all arithmetic progressions
  containing $i_j$ and giving the same subset $\underline{a}$.  Here,
  $\varphi$ denotes Euler's phi function.
    \end{enumerate}
  \end{definition}

\begin{remark}
  As we mentioned earlier, if the subset $\aaa$ 
  has two elements, say $i_j$ and $i_k$, then
  $F_{\aaa}$ is a bivalent node and we may eliminate it from the graph
  if desired, replacing its two adjacent edges by a single edge. Both
  edges $F_{i_j,i_k}D_{i_j}$ and $F_{i_j,i_k}D_{i_k}$ have the
  same multiplicity, so we assign this number as the multiplicity of
  the new edge $D_{i_j}D_{i_k}$.

  From the definition, it is immediate to check that the node $E_{i_1}$
  is always bivalent. However, we always keep it in our
  graph, since this greatly simplifies our constructions. 
\end{remark}

We illustrate the previous definition 
with an example.  Note that, in general, the master graph may have
nodes $F_{\aaa}$ with $0\notin
\aaa$. 
This is determined by the combinatorics of the set $\{i_1,
\ldots, i_n\}$.

\begin{example}\label{ex:RanestadMaster}
We compute the master graph associated to the set $\{30,45,55,78\}$. For
simplicity, we eliminate all nine bivalent nodes $F_{i_j,i_k}$ from the
construction and we keep the bivalent grey node $E_{i_1}$. The resulting weighted graph has 16 vertices and 36 edges and it is
depicted in Figure~\ref{fig:RanestadMaster}. There are
five nodes of type
$F_{\aaa}$, namely $F_{0,30,45,55,78}=(1,1,1,1,1)$,
$F_{0,30,45,78}=(1,1,1,0,1)$, $F_{0,30,45,55}=(1,1,1,1,0)$,
$F_{0,30,45}=(1,1,1,0,0)$ and $F_{0,30,78}=(1,1,0,0,1)$. They correspond to the five unlabeled nodes
in the picture.  
\begin{figure}[ht]
    \centering
    \includegraphics[scale=0.4]{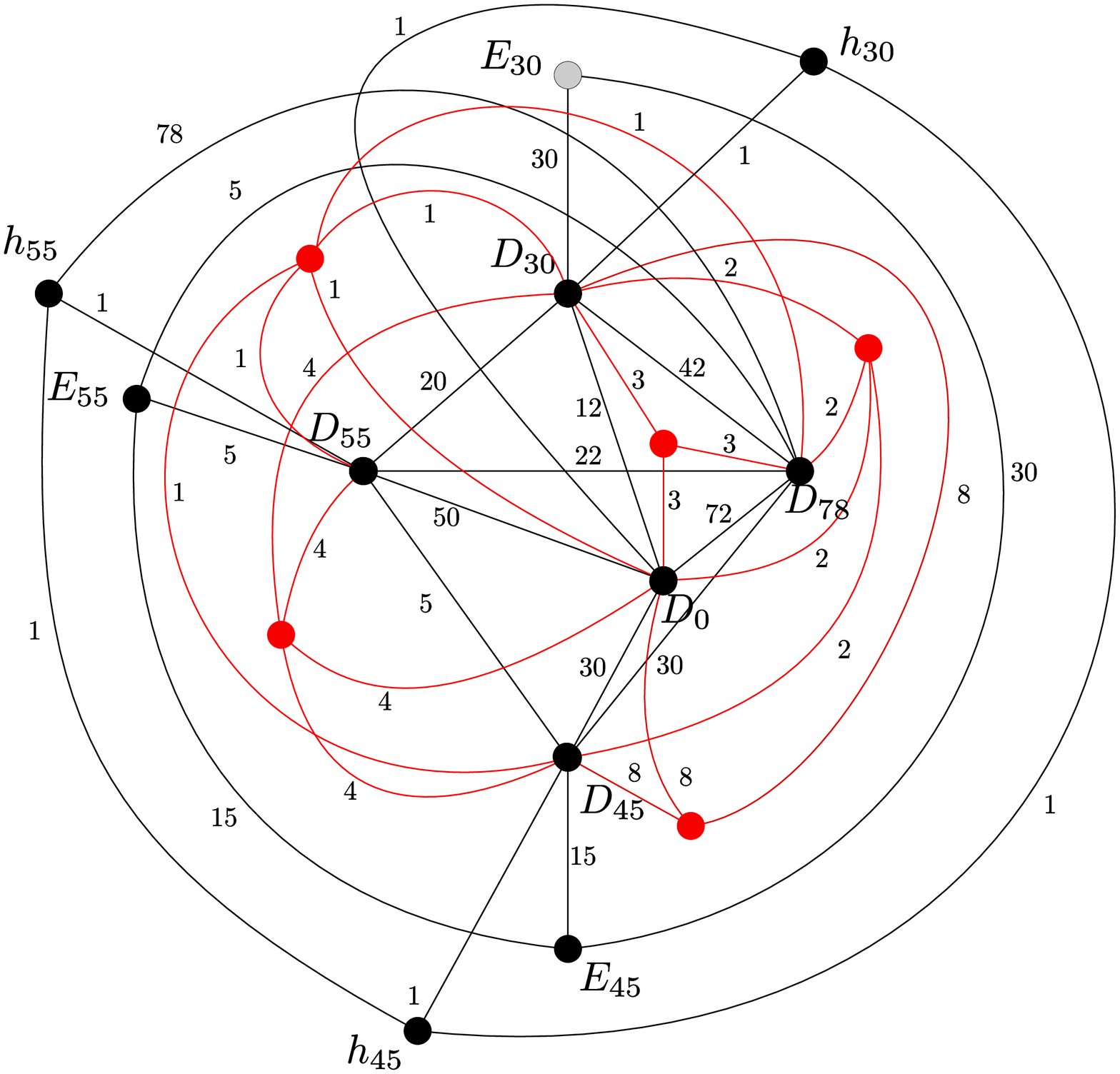}
    \caption{The master graph associated to the curve $(1:t^{30}:t^{45}:t^{55}:t^{78})$.}
    \label{fig:RanestadMaster}
  \end{figure}
\end{example}
Before stating the main result of this section, we recall the
definition of a balanced graph.
\begin{definition}\label{def:balancing} Let $(G,m)\subset \RR^{N}$ be a weighted graph where each node has integer coordinates. Let $w$ be a node in $G$
  and let $\{w_1, \ldots, w_r\}$ be the set of nodes adjacent to
  $w$. Consider the primitive lattices $\Lambda_{w} = \RR\langle
  w\rangle \cap \ZZ^N$ and $\Lambda_{w,w_i}=\RR\langle w, w_i\rangle
  \cap \ZZ^{N}$. Then, $\Lambda_{w,w_i}/\Lambda_{w}$ is a rank one
  lattice and it admits a unique generator which lifts to an element
  in the cone $\RR_{\geq 0}\langle w, w_i\rangle\subset \RR^{N}$. Let
  $u_{w_i|w}$ be one such lifting. We say that the \emph{node} $w$ is
  \emph{balanced} if $\sum_{i=1}^r m_{w_i,w} u_{w_i|w} \in
  \Lambda_w
  $.  If all nodes of $G$ are balanced, then we say that the weighted
  graph $(G,m)$ satisfies the \emph{balancing condition}.
\end{definition}

\begin{theorem}\label{thm:tropSecGraph} The master graph satisfies the \emph{balancing condition}.
\end{theorem}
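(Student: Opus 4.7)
The plan is to verify the balancing condition vertex-by-vertex, partitioning the nodes of the master graph into the four structural classes of Definition~\ref{def:masterGraph}: the arithmetic-progression nodes $F_{\aaa}$, the coordinate nodes $D_{i_j}$, and the chain nodes $E_{i_j}$ and $h_{i_j}$. Since each primitive $u_{w'|w}$ is determined only modulo $\Lambda_w$, I compute every contribution $m_{w,w'}u_{w'|w}$ modulo $\Lambda_w$ and check that the sum vanishes in the quotient.

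At a node $F_{\aaa}$, the key observation is that the weight $m_{F_{\aaa},D_{i_j}}=\sum_r\varphi(r)$ is independent of the choice of $i_j\in\aaa$: once $|\aaa|\geq 2$, the admissible common differences are intrinsic to $\aaa$ (they are the positive $r$ such that all elements of $\aaa$ are congruent mod $r$ and no element of $\{0,i_1,\ldots,i_n\}\setminus\aaa$ is). Calling this common value $m$, the identity $\Lambda_{F_{\aaa},D_{i_j}}=\ZZ e_j\oplus\ZZ F_{\aaa}$ together with the fact that $e_j$ already lies in the cone yields $u_{D_{i_j}|F_{\aaa}}\equiv e_j$, so the balance sum is $m\sum_{i_j\in\aaa}e_j=mF_{\aaa}\in\Lambda_{F_{\aaa}}$. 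At a node $D_{i_j}$, direct lattice computations give $u_{E_{i_j}|D_{i_j}}=E_{i_j}/g_j$, $u_{h_{i_j}|D_{i_j}}=h_{i_j}/g_j^{*}$ and $u_{F_{\aaa}|D_{i_j}}=F_{\aaa}$, where $g_j=\gcd(i_1,\ldots,i_j)$ and $g_j^{*}=\gcd(i_j,\ldots,i_n)$. A short coordinatewise check shows that the ``chain contribution'' to the weighted sum has $k$-th entry $-|i_k-i_j|$ for every $k\neq j$ (with $i_0=0$), while the total $F$-contribution at position $k$ equals $\sum_{\aaa\supset\{i_j,i_k\}}m_{F_{\aaa},D_{i_j}}=\sum_{r\mid|i_k-i_j|}\varphi(r)=|i_k-i_j|$ by the classical identity $\sum_{d\mid n}\varphi(d)=n$. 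The two terms cancel, leaving an element of $\ZZ e_j$. The boundary cases $j=0,n$ proceed identically after omitting the absent chain node.

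The interior chain vertices are where the real work lies. For $E_{i_j}$ with $2\leq j\leq n-2$, set $p=g_{j-1}/g_j$, $q=i_j/g_j$ and $E'_{i_j}=E_{i_j}-i_j\mathbf{1}_{[j,n]}=(0,i_1,\ldots,i_{j-1},0,\ldots,0)$. Then $\gcd(p,q)=1$, $\Lambda_{E_{i_j},E_{i_{j-1}}}=\ZZ X\oplus \ZZ Y$ with $X=E'_{i_j}/g_{j-1}$ and $Y=\mathbf{1}_{[j,n]}$, and $E_{i_j}/g_j=pX+qY$ is primitive in $\Lambda_{E_{i_j},E_{i_{j-1}}}$. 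The primitive direction $u_{E_{i_{j-1}}|E_{i_j}}$ is any cone lift $a_1X+b_1Y$ satisfying the Bezout relation $qa_1-pb_1=1$, while $u_{E_{i_{j+1}}|E_{i_j}}\equiv\mathbf{1}_{[j+1,n]}$ and $u_{D_{i_j}|E_{i_j}}\equiv e_j\pmod{\Lambda_{E_{i_j}}}$. Using $Y=\mathbf{1}_{[j+1,n]}+e_j$, the $g_j$-weighted contributions from $E_{i_{j+1}}$ and $D_{i_j}$ combine into $g_jY$, so the total balance sum modulo $\Lambda_{E_{i_j}}$ is $g_{j-1}a_1X+(g_{j-1}b_1+g_j)Y$, which lies in $\ZZ(pX+qY)$ exactly when $a_1i_j-b_1g_{j-1}=g_j$---i.e.\ the rescaled Bezout relation. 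The $h$-chain interior vertices are treated identically (swapping the role of $(i_1,\ldots,i_j)$ for $(i_j,\ldots,i_n)$), and the boundary vertices $E_{i_1},E_{i_{n-1}},h_{i_1},h_{i_{n-1}}$ yield to the same combining trick; for instance, at the bivalent $E_{i_1}$ balance collapses to $i_1(e_1+\mathbf{1}_{[2,n]})=E_{i_1}$.

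The main obstacle is this last step: identifying consistent bases for the various $\Lambda_{w,w'}$, tracking the Bezout cancellations, and separately verifying each of the four boundary configurations requires careful bookkeeping. In contrast, the $F_{\aaa}$ case hinges only on the intrinsic characterization of admissible common differences, and the $D_{i_j}$ case reduces transparently to the Euler-phi divisor-sum identity.
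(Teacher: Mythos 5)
Your proposal is correct and follows essentially the same route as the paper's proof: a node-by-node verification in which balance at the $F_{\aaa}$ and $D_{i_j}$ nodes reduces to the equal-weight observation and the identity $\sum_{r\mid s}\varphi(r)=s$, while the chain nodes $E_{i_j}$, $h_{i_j}$ are handled by extending $E_{i_j}/g_j$ (resp.\ $h_{i_j}/g^{i_j}$) to a basis of the saturated rank-two lattice via a Bezout relation between $g_{j-1}/g_j$ and $i_j/g_j$. Your disjoint-support basis $X,Y$ is just a cleaner packaging of the paper's explicit vector $v$ (whose primitivity is checked there via $2\times 2$ minors), and, as the paper does, one should still record the short check that the cone-lift condition forces the sign $qa_1-pb_1=+1$ rather than $-1$.
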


 \begin{proof}
We proceed by analyzing the balance at each node, following
   Definition~\ref{def:balancing}. The main difficulty lies in finding
   the corresponding vector $u_{w_i|w}$ for each edge $w_iw$ in the
   graph.  We define $g_{i_j}:=\gcd(i_1, \ldots, i_j)$ and
   $g^{i_j}:=\gcd(i_j, \ldots, i_n)$. Note that these are the weights
   $m_{D_{i_j}, E_{i_j}}$ and $m_{D_{i_j}, h_{i_{j}}}$ of the master
   graph. To simplify notation, we set
   $E_{i_{0}}=E_{i_n}=h_{i_{0}}=h_{i_{n}}={\bf 0}$, add the edges
   ${E_{i_{0}} E_{i_{1}}}$, ${E_{i_{n-1}} E_{i_n}}$, ${h_{i_{0}}
     h_{i_{1}}}$ and ${h_{i_{n-1}} h_{i_{n}}}$ to our graph and assign
   weight zero to them.

   We start by checking the balance at all nodes $E_{i_j}$, for $1\leq
   j \leq n-1$.  In this case, we know that $\Lambda_{E_{i_j}}=\ZZ
   \langle E_{i_j}/g_{i_j}\rangle = \ZZ\langle (0, i_1/g_{i_j},
   \ldots, i_j/g_{i_j}, \ldots, i_j/g_{i_j})\rangle$ and
   $\Lambda_{ E_{i_j}, D_{i_j}}=\ZZ\langle E_{i_j}/g_{i_j},
   e_j\rangle$. So $u_{D_{i_j}|E_{i_j}}= e_j$. Similarly, we have
   $u_{D_{i_n}|E_{i_{n-1}}}=e_n$. On the other hand, we know that the lattice
$\Lambda_{E_{i_j}, E_{i_{j+1}}}$ equals $\RR\langle
E_{i_{j+1}}/g_{i_{j+1}},$ $E_{i_j}/g_{i_j}\rangle \cap \ZZ^{n+1}$.  
   By definition, we need to
   extend the primitive vector $E_{i_j}/g_{i_j}$ to a basis of
   $\Lambda_{ E_{i_j}, E_{i_{j+1}}}$ by adding a single vector with
   appropriate sign. In this case, $u_{E_{i_{j+1}}|
     E_{i_j}}=\sum_{k=j+1}^n e_k$.

   Next, we compute $u_{E_{i_{j-1}}|E_{i_j}}$. Here, $\Lambda =
   \ZZ\langle E_{i_{j-1}}/g_{i_{j-1}}, E_{i_j}/g_{i_j}\rangle$ is not
   a primitive lattice, and we need to extend $E_{i_j}/g_{i_j}$ to a
   basis of its saturation $\Lambda_{E_{i_{j-1}},E_{i_j}}$. Our first
   candidate vector is $(E_{i_j}-E_{i_{j-1}})/(i_j-i_{j-1})=
   -\sum_{k=j-1}^n e_k$. However, since $g_{i_{j-1}}$ need not equal
   $g_{i_j}$, we need to slightly modify our choice. We set $v=
   (0, a\, i_1/g_{i_{j-1}}, \ldots, a\, i_{j-1}/g_{i_{j-1}}, -b,
   \ldots, -b)$, for $a,b\in \ZZ$ such that $a\, i_{j} + b\, g_{i_{j-1}}=
   g_{i_j}$. We can check that all nonzero $2\times 2$-minors of the matrix with
   rows $E_{i_j}/g_{i_j}$ and $v$ are of the form: $-b\,i_k/g_{i_j}
   -(i_ji_k)/(g_{i_j} g_{i_{j-1}})=-i_k/g_{j-1}$, for $j-1\leq k \leq
   n$. Therefore,  their gcd equals one, and $E_{i_j}/g_{i_j}$ and $\pm
   v$ generate a primitive lattice which contains $E_{i_{j-1}}$ by
   construction. To determine the correct choice of sign for $\pm v$,
   we write $E_{i_{j-1}}$ as a linear combination of $E_{i_j}/g_{i_j}$
   and $v$, and we require the coefficient of $v$ to be positive. In this
   case,
\[
E_{i_{j-1}}= g_{i_j}(1-a\, (i_j-i_{j-1})/g_{i_j})\cdot E_{i_j}/g_{i_j}
+ g_{i_{j-1}}\, (i_j-i_{j-1})/g_{i_j} \cdot v.
\]
Thus, we conclude that $u_{E_{i_{j-1}}|E_{i_j}}=v$ for $j-1\leq k \leq
n$.  With these
weights, it is straight-forward to check that the graph is balanced at
$E_{i_j}$.

Working with $g^{i_j}$ instead of $g_{i_j}$, a similar procedure to
the one we just described proves that the graph is balanced at the
nodes $h_{i_j}$ for all $1\leq j \leq n-1$. Balance at the nodes
$F_{\aaa}$ follows by construction, so it remains to check the balance
at the nodes $D_{i_j}$. In this case,
$u_{E_{i_j}|{D_{i_j}}}=E_{i_j}/g_{i_j}$,
$u_{h_{i_j}|{D_{i_j}}}=h_{i_j}/g^{i_j}$,
$u_{F_{\aaa}|{D_{i_j}}}=F_{\aaa}$ (for $i_j\in \aaa$), and
$u_{E_{i_{n-1}}|D_{i_n}}=E_{i_{n-1}}$. The balancing equation 
is
\[
\sum_{\aaa \ni i_j} (\sum_r \varphi(r)) F_{\aaa}
+E_{i_j} + h_{i_j} = \sum_{k=0}^n \big(\sum_{r\, \mid
  \,|i_k-i_j|}\!\!\!\!\varphi(r) \,- |i_k-i_j|\big) \, e_k.
\]
Since $\sum_{l\mid s, l>0} \varphi(l)=s$, we conclude that the graph
is also balanced at $D_{i_j}$. %
 \end{proof}

\section{The master graph is a tropical surface}
\label{sec:tropical-geometry}
In this section, we explain the suggestive name ``tropical secant
surface graph'' for the master graph. More concretely, we show that it
is the tropicalization of a surface 
parameterized by the map $(\lambda, w) \mapsto (1-\lambda,
w^{i_1}-\lambda, \ldots, w^{i_n}-\lambda)$.  Before that, we review
the basics of tropical geometry.

\begin{definition} \label{def:tropicalization}
  Given an affine variety $X\subset \CC^{N}$ with defining ideal
  $I=I(X)$, we define the \emph{tropicalization} of $X$ to be the set
\[
\cT X =\cT I=\{w\in \RR^{N}: \init_w(I) \textrm{ does not contain a monomial}\}.
\]
Here, $\init_w(I)=\langle \init_w(f): f\in I\rangle$, and if
$f=\sum_{\alpha} c_\alpha\, \underline{x}^{\alpha}$
, then $\init_w(f)=\sum_{\alpha\cdot w =W}
c_{\alpha}\, \underline{x}^{\alpha}$, where $W=\min\{\alpha\cdot w:
c_{\alpha}\neq 0\}$.  If 
$X\subset \pr^N$, then its tropicalization is defined as
$\cT X'\subset \RR^{N+1}$, where $X'$ is the affine cone over $X$ in
$\CC^{N+1}$.
\end{definition}

Although it may not be clear from
Definition~\ref{def:tropicalization}, tropicalizations are toric in
nature. More precisely, let $\TP^N = (\CC^*)^N$ be the algebraic
torus. Let $Y$ be a subvariety of $\TP^N$, also known as a \textit{very
  affine variety}.  Suppose $I_Y \subseteq \CC[\TP^N] = \CC[y_1^{\pm},
\ldots, y_N^{\pm}]$ is the defining ideal of $Y$. We define the
tropicalization of $Y\subset \TP^N$ as
\[
\cT Y = \{ v \in \RR^N : 1 \notin \init_v(I_Y)\}.
\]
Here, the initial ideal with respect to a vector $v$ is the same as
that in Definition~\ref{def:tropicalization}. Consider the Zariski
closure $\overline{Y}$ of $Y$ in $\CC^N$. It is easy to see that $\cT
Y$ equals $\cT \overline{Y}$.  Indeed, this follows from the fact that
$I_Y$ is the saturation ideal $\big(I_{\overline{Y}} \CC[\TP^N]:
(y_1\cdots y_N)^{\infty}\big)$ and $I_{\overline{Y}}= I_Y \cap
\CC[y_1,\ldots, y_N]$. Therefore, if we start with an irreducible
variety $X\subset \CC^N$ not contained in a coordinate hyperplane,
then we can consider the very affine variety $Y=X\cap \TP^N$, which
has the same dimension as $X$.  The tropical variety $\cT Y$ is a pure
polyhedral subfan of the Gr\"obner fan of $I$ and it preserves an
important invariant of $Y$: both objects have the same
dimension~\citep{BG}.

Tropical implicitization is a recently developed technique to approach
classical implicitization problems~\cite{ElimTheory}. For instance,
when $Y$ is a codimension-one hypersurface, $I_Y = \langle g \rangle$
is principal and $\cT Y$ is the union of all codimension one cones in
the normal fan of the Newton polytope of a polynomial $g$, so knowing
$\cT Y$ can help us in finding $g$. But to achieve this, we need to
compute $\cT Y$ without explicitly knowing $I_Y$. We show how to do
this in Section~\ref{sec:comb-monom-curv}.

A point $w \in \cT X$ is called \emph{regular} if $\cT X$ is a linear
space locally near $w$. We can attach a positive integer to each
regular point of the tropical variety that carries information about
the geometry of $X$. More precisely, we define the \emph{multiplicity}
$m_w$ of a regular point $w$ to be the sum of multiplicities of all
minimal associated primes of the initial ideal
$\init_w(I)$~\citep{TropDiscr}. The multiplicity of a maximal cone in
$\cT X$ agrees with the multiplicity at any of its regular points. One
can show that this assignment does not depend on the choice of the
regular point and that with these multiplicities, the tropical variety
satisfies the \emph{balancing condition} \citep[Corollary
3.4]{ElimTheory}.  

In the case of projective varieties, or in general, when we have a
torus action given by an integer lattice $\Lambda$, the tropical
variety $\cT X$ has a \emph{lineality space}, that is, the maximal
linear space contained in all cones of the fan $\cT X$.  We call the
underlying lattice $\Lambda$ the \emph{lineality lattice}.  For
example, the lineality space of a tropical hypersurface $\cT(g)$
equals the orthogonal complement of the affine span of the Newton
polytope of $g$, after appropriate translation to the origin.  The
extreme case is that of a toric variety globally parameterized by a
monomial map with associated integer matrix $A$. Its tropicalization
$\cT X$ is a classical linear space, namely, the row span of $A$.
$\cT X$ coincides with its lineality space as sets with constant
multiplicity \emph{one} \citep{TropDiscr}.

Since the lineality space $L$ is contained in all cones of $\cT X$, we
can quotient the ambient space by this linear subspace, while
preserving the fan structure~\cite{Mega09}. Furthermore, we intersect
this new set with the unit sphere in $\RR^{N+1}/L$ and consider the
underlying weighted polyhedral complex.  For example, if $X$ is a
surface with no non-trivial torus action, then we view $\cT X$ as a
graph in $\Sn^{N}$.

We now realize the master graph as a tropical surface in $\RR^{n+1}$:
\begin{theorem}\label{thm:MasterGraphIsTropical} Fix a primitive
  strictly increasing sequence $(0, i_1, \ldots, i_n)$ of coprime
  integers.  Let $Z$ be the surface in $\CC^{n+1}$ parameterized by
  $(\lambda, \ww) \mapsto (1-\lambda, \ww^{i_1}-\lambda,\ldots,
  \ww^{i_{n}}-\lambda)$. Then, the tropical surface $\cT Z\subset
  \RR^{n+1}$ coincides with the cone over the master graph as weighted
  polyhedral fans, with the convention that we assign the weight
  $m_{D_{i_1},E_{i_1}} + m_{F_{\underline{e}}, D_{i_1}}$ to the cone
  over the edge $D_{i_1}E_{i_1}$ if the ending sequence
  $\underline{e}=\{i_1, \ldots, i_n\}$ gives a node
  $F_{\underline{e}}$ in the master graph.
\end{theorem}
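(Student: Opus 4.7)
The plan is to apply geometric tropicalization (Theorem~\ref{thm:GT}) to the very affine surface $Y = Z\cap \TP^{n+1}$. The parameterization $\phi(\lambda,\ww)=(1-\lambda,\ww^{i_1}-\lambda,\ldots,\ww^{i_n}-\lambda)$ realizes $Y$ as the birational image of an open subset of $\af^2$; on the source side, the divisors that are contracted to coordinate hyperplanes of $\CC^{n+1}$ are the $n+1$ affine curves $B_{i_j}:=\{\lambda=\ww^{i_j}\}$ for $0\leq j\leq n$ (with $B_{i_0}=\{\lambda=1\}$) together with the toric boundary of a compactification such as $\pr^1\times\pr^1$. The strategy is to produce a proper birational model of $\af^2$ in which this arrangement is simple normal crossings, identify each boundary component with a node of the master graph, and verify that the divisorial valuations and intersection multiplicities reproduce the vectors and weights of Definition~\ref{def:masterGraph}.

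To resolve the arrangement, I would blow up the non-transverse intersection points of its components, which fall into three families. First, the origin $(\lambda,\ww)=(0,0)$ lies on $B_{i_j}$ for every $j\geq 1$ and on the toric divisor $\{\ww=0\}$: a sequential chain of blow-ups produces a caterpillar of exceptional divisors which I identify with the nodes $E_{i_1},\ldots,E_{i_{n-1}}$ of the subgraph $G_{E,D}$. A symmetric treatment of the corresponding point at infinity yields the divisors $h_{i_1},\ldots,h_{i_{n-1}}$ of $G_{h,D}$. Second, for each nontrivial $r$-th root of unity $\zeta$ and each residue class $[i_k]\in\ZZ/r$, the point $(\ww,\lambda)=(\zeta,\zeta^{i_k})$ is common to exactly those $B_{i_j}$ with $i_j\equiv i_k\pmod r$; a single blow-up there produces an exceptional divisor that I identify with $F_{\aaa}$ for $\aaa=\{i_j\in\{0,i_1,\ldots,i_n\}:i_j\equiv i_k\pmod r\}$, which is precisely the intersection of an arithmetic progression with $\{0,i_1,\ldots,i_n\}$. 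Several choices of $(\zeta,r)$ may give rise to the same subset $\aaa$ and hence the same ray of $\cT Y$; this is the combinatorial source of the factor $\sum_r\varphi(r)$ in Definition~\ref{def:masterGraph}(iv).

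With the resolved model in hand, I would verify that each divisorial valuation vector $v_D=(\val_D(1-\lambda),\val_D(\ww^{i_1}-\lambda),\ldots,\val_D(\ww^{i_n}-\lambda))$ matches the coordinates in Definition~\ref{def:masterGraph}: the case of $D_{i_j}$ is immediate, the case of $F_{\aaa}$ follows from a single blow-up computation, and the cases of $E_{i_j},h_{i_j}$ follow by induction along the caterpillar chain. The edges of the master graph then correspond to pairwise intersections of boundary components on the resolved surface, and their multiplicities follow from the geometric tropicalization formula, whereby the weight of a $2$-cone is an intersection number multiplied by an appropriate lattice index. The $\gcd$ weights on the caterpillar edges and the sums $\sum_r\varphi(r)$ on the star edges emerge naturally from this calculation.

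The main obstacle will be the inductive bookkeeping along the blow-up chains at the origin and at infinity, where many curves of varying tangency orders must be tracked simultaneously and the weights $g_{i_j}=\gcd(i_1,\ldots,i_j)$ and $g^{i_j}=\gcd(i_j,\ldots,i_n)$ surface at each stage — much as already witnessed in the proof of Theorem~\ref{thm:tropSecGraph}. The final convention on the edge $D_{i_1}E_{i_1}$ then reflects a numerical coincidence: whenever $\underline{e}=\{i_1,\ldots,i_n\}$ is itself an admissible arithmetic-progression subset, one has $E_{i_1}=i_1\cdot F_{\underline{e}}$, so the rays $\RR_{\geq 0}E_{i_1}$ and $\RR_{\geq 0}F_{\underline{e}}$ coincide in $\RR^{n+1}$ and the two distinct boundary divisors each contribute to the multiplicity of the single $2$-cone they span with $D_{i_1}$ in $\cT Z$.
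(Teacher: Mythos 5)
Your overall strategy is the same as the paper's: apply geometric tropicalization (Theorem~\ref{thm:GT}) to the complement of the arrangement $\{\ww^{i_j}-\lambda=0\}$, resolve by blow-ups at the origin, at infinity and at the torus points $(\zeta,\zeta^{u})$, read off the divisorial valuations, and extract the weights from intersection numbers and lattice indices (the paper compactifies in $\pr^2$ rather than $\pr^1\times\pr^1$, which is immaterial). The genuine gap is the passage from the resolved boundary to the master graph. A model on which the boundary has (combinatorial) normal crossings has many more components than the master graph has nodes: the chain over the origin consists of $i_{n-1}$ exceptional curves $E_1,\ldots,E_{i_{n-1}}$, of which only $E_{i_1},\ldots,E_{i_{n-1}}$ are nodes, and the chain at infinity is similarly long and is accompanied by the two toric divisors of $\pr^1\times\pr^1$ (the analogues of the paper's $D_\infty$ and $H$), which \emph{are} boundary components through that corner. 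Theorem~\ref{thm:GT} produces $\cT Z$ as the cone over the dual graph of the \emph{entire} boundary, so to reach the stated conclusion you must prove (a) that every discarded ray lies in a two-dimensional cone over a retained edge, e.g.\ $[E_l]=\frac{i_{j+1}-l}{i_{j+1}-i_j}[E_{i_j}]+\frac{l-i_j}{i_{j+1}-i_j}[E_{i_{j+1}}]$ for $i_j<l<i_{j+1}$, $[h_{i_n}]=i_n[H]$, and in your model $[\{\ww=\infty\}]=i_1[D_{i_0}]+[h_{i_1}]$; and (b) that, apart from the coincidence $i_1[F_{\underline{e}}]=[E_{i_1}]$ which you do treat, no two cones over distinct master-graph edges intersect in dimension two. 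Without (a) the set-theoretic equality is unproved, and without (b) the edge weights are unjustified: by Remark~\ref{rm:SumMultGT} the multiplicity at a regular point is the \emph{sum} of the weights of all cones of the resolved dual graph containing it, so any undetected overlap would force a merging of weights exactly as in the $D_{i_1}E_{i_1}$ convention. This is precisely the content of Lemma~\ref{lm:coarseningTropicalgraph} and the multiplicity bookkeeping that follows it in the paper (alternatively, one contracts the superfluous curves and must then verify that the contracted model is still normal, $\QQ$-factorial, with CNC boundary); your plan names the blow-up bookkeeping as the main obstacle but is silent on this coarsening/overlap analysis.

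A few smaller points should also be fixed. The torus points must be indexed by \emph{primitive} $r$-th roots of unity (the subset $\aaa$ through $(\zeta,\zeta^{u})$ is determined by the exact order of $\zeta$), which is what your count $\sum_r\varphi(r)$ tacitly assumes; the divisor $\{\ww=0\}$ is not a boundary component at all, since no $f_j$ vanishes along it, so it plays no role at the origin; the generic finiteness degree $\delta=1$ of the parameterization enters the weight formula of Theorem~\ref{thm:GT} and should be recorded; and the situation at infinity is not literally symmetric to the origin, because two extra boundary divisors pass through that corner and one of them is tangent to all the branches, which is exactly why the paper's computation there is more delicate.
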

The proof of this statement involves techniques from geometric
tropicalization and resolution of plane curve singularities.
Beautiful combinatorics are involved in its proof, as we show in
Section~\ref{sec:comb-monom-curv}.

\begin{corollary}\label{cor:ModifiedMasterGraphIsTropical}
  With the notation of Theorem~\ref{thm:MasterGraphIsTropical}, the
  weighted graph obtained by identifying the nodes $E_{i_1}$ and
  $F_{\underline{e}}$ in the master graph, and by assigning weight
  $i_1+ m_{F_{\underline{e}}, D_{i_1}}$ to the edge $D_{i_1}E_{i_1}$,
  agrees with the one-dimensional simplicial complex $\cT Z\cap
  \Sn^n$.
\end{corollary}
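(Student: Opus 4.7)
The plan is to deduce the corollary directly from Theorem~\ref{thm:MasterGraphIsTropical} by tracking how the cone over the master graph restricts to the unit sphere $\Sn^n \subset \RR^{n+1}$. Since $\cT Z$ is a $2$-dimensional fan obtained by coning the master graph from the origin, intersecting with $\Sn^n$ amounts to taking the link of the origin: each node $v$ maps to $v/\|v\|$, each edge becomes the great-circle arc joining its endpoint directions, and the multiplicity of each $2$-dimensional cone is inherited by the corresponding arc. Two nodes of the master graph yield the same point on $\Sn^n$ exactly when they are positive scalar multiples in $\RR^{n+1}$, and two edges yield the same arc exactly when both endpoints are identified under such rescaling.

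First I would classify all positive-scalar-multiple coincidences among the four families of node vectors from Definition~\ref{def:masterGraph}. The vectors $h_{i_j}$ have strictly negative coordinates and cannot match any non-negative node. Distinct $D_{i_j}=e_j$ are pairwise non-proportional, no $F_{\underline{a}}$ equals $c\,e_j$ because $|\underline{a}| \geq 2$, and distinct $0/1$ vectors $F_{\underline{a}}$ and $F_{\underline{a}'}$ are also non-proportional. For $j \geq 2$ the vector $E_{i_j}$ exhibits the strictly increasing initial segment $0 < i_1 < \ldots < i_j$, so it is not proportional to any $0/1$ vector or coordinate vector. Only the case $j = 1$ survives, where
\[
E_{i_1} \;=\; (0, i_1, i_1, \ldots, i_1) \;=\; i_1\cdot (0, 1, 1, \ldots, 1) \;=\; i_1\, F_{\underline{e}}, \qquad \underline{e} = \{i_1, \ldots, i_n\}.
\]
Since $\underline{e}$ is obtained by intersecting the arithmetic progression $\{i_1, i_1+1, \ldots, i_n\}$ with the index set, and $|\underline{e}|=n\geq 4$, the node $F_{\underline{e}}$ always belongs to the master graph. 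Hence $E_{i_1} \sim F_{\underline{e}}$ is the only node identification on $\Sn^n$.

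Next I would locate the edges that collapse. Two edges become parallel on $\Sn^n$ only when both of their endpoints coincide under the identification, and the only identification available is $E_{i_1} \sim F_{\underline{e}}$. In the master graph, $E_{i_1}$ is adjacent only to $D_{i_1}$ and $E_{i_2}$, while $F_{\underline{e}}$ is adjacent to $D_{i_1},\ldots,D_{i_n}$. Intersecting these adjacency lists yields the single parallel pair $\{D_{i_1}E_{i_1},\, D_{i_1}F_{\underline{e}}\}$; every other edge of the master graph maps to a distinct great-circle arc on $\Sn^n$. By the weight convention of Theorem~\ref{thm:MasterGraphIsTropical}, the $2$-cone over these two merged edges carries the combined weight
\[
m_{D_{i_1}, E_{i_1}} + m_{F_{\underline{e}}, D_{i_1}} \;=\; \gcd(i_1) + m_{F_{\underline{e}}, D_{i_1}} \;=\; i_1 + m_{F_{\underline{e}}, D_{i_1}},
\]
which is precisely the weight attached to the merged edge in the corollary. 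All remaining edges keep their original multiplicities, so the resulting weighted $1$-complex coincides with $\cT Z \cap \Sn^n$.

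The only delicate part is the proportionality analysis in the first step, where I must rule out any unexpected coincidence among the many $F_{\underline{a}}$ nodes or between $E_{i_j}$ and $F_{\underline{a}}$ for larger $j$. Once that classification is in hand, the rest is a mechanical transfer of Theorem~\ref{thm:MasterGraphIsTropical} from the fan $\cT Z \subset \RR^{n+1}$ to its spherical link.
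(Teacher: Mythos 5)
Your overall strategy---pass from the fan $\cT Z$ (equal to the cone over the master graph, by Theorem~\ref{thm:MasterGraphIsTropical}) to its spherical link, classify which node vectors are positive multiples of one another, and merge the unique pair of coincident edges while adding their weights---is exactly how the corollary follows in the paper, and your proportionality bookkeeping (the $h_{i_j}$ have negative entries, the $D$'s and $F$'s are pairwise non-proportional, $E_{i_j}$ for $j\geq 2$ has two distinct nonzero values among its coordinates, so only $E_{i_1}=i_1F_{\underline{e}}$ survives) is correct.

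However, there is a genuine error in the step where you claim that ``the node $F_{\underline{e}}$ always belongs to the master graph'' because $\underline{e}=\{i_1,\ldots,i_n\}$ is the intersection of the one-sided difference-one progression $\{i_1,i_1+1,\ldots\}$ with the index set. The subsets $\aaa$ in Definition~\ref{def:masterGraph} are not arbitrary one-sided progressions: as the derivation in Section~4 makes clear, each $F_{\aaa}$ arises from an actual crossing point $x_{p,r,u}$ of the binomial curves in $\TP^2$, so $\aaa=\aaa_{r,u}=\{i_j \mid i_j\equiv u \pmod r\}$ is a full residue class intersected with $\{0,i_1,\ldots,i_n\}$ (the common difference $1$ yields only the full set $\{0,i_1,\ldots,i_n\}$). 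Consequently $\underline{e}$ gives a node exactly when some $r\geq 2$ divides all differences $i_j-i_1$, i.e.\ when $\gcd(i_2-i_1,\ldots,i_n-i_1)\neq 1$ (coprimality of the $i_j$ then forces $r\nmid i_1$, so $0\notin\aaa$). This is why Theorem~\ref{thm:MasterGraphIsTropical} and the discussion around Lemma~\ref{lm:coarseningTropicalgraph} are phrased conditionally (``if the ending sequence $\underline{e}$ gives a node $F_{\underline{e}}$''), and it is visible in the paper's running example $\{30,45,55,78\}$ (Example~\ref{ex:RanestadMaster}): there $\gcd(15,25,48)=1$ and the master graph has no node $F_{30,45,55,78}$, so no identification or weight change occurs and the edge $D_{30}E_{30}$ keeps weight $30$. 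Your argument, taken literally, would add a spurious node and a spurious summand $m_{F_{\underline{e}},D_{i_1}}$ in every such case. Once you restrict the identification and the weight $i_1+m_{F_{\underline{e}},D_{i_1}}$ to the case $\gcd(i_2-i_1,\ldots,i_n-i_1)\neq 1$ (the corollary being vacuously the master graph itself otherwise), the rest of your reduction goes through; note also that the absence of any further coincidences or partial overlaps of arcs is not purely an endpoint statement but is supplied by the last assertion of Lemma~\ref{lm:coarseningTropicalgraph}, on which the theorem's ``as weighted polyhedral fans'' already relies.
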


\section{Combinatorics of Monomial Curves}
\label{sec:comb-monom-curv}

In this section, we compute the tropical variety of the surface $Z$
described in Theorem~\ref{thm:MasterGraphIsTropical}. Our main tool
will be the theory of \emph{geometric tropicalization}, which we now
present.  The crux of this theory is to read off the tropicalization
of subvarieties of tori from the combinatorics of the boundary of a
suitable compactification. We describe this method for parametric surfaces.

Let $f_1, \ldots, f_N$ be Laurent polynomials in $\CC[t_1^{\pm}, t_2^{\pm}]
$ and consider the rational map ${\bf f} \colon \TP^2
\dashrightarrow \TP^N$, ${\bf f}= (f_1, \ldots, f_N)$. For simplicity,
we assume that the fiber of ${\bf f}$ over a generic point of
$Y\subset \TP^N$ is finite.  Our goal is to compute the
tropicalization $\cT Y$ of the closure of the image of the map ${\bf
  f}$ inside the torus, without knowing its defining ideal.  When
the coefficients of $f_1, \ldots, f_N$ are generic with respect to
their Newton polytopes, a method for constructing $\cT Y$ from these
$N$ polytopes was given
in \citep[Theorem 2.1]{NPofImplicitEquation} and proved in \citep[Theorem
5.1]{ElimTheory}. In the non-generic case, this question is more
subtle and has been partially address in~\cite{Implicitizationsurfaces, ElimTheory}. 
For simplicity, we state the result for the case of parametric \emph{surfaces}
although the method generalizes to higher dimensional subvarieties.

\begin{theorem}[Geometric Tropicalization {\citep[\S 2]{HKT},
    \citep[Theorems
    2.5, 2.8]{Implicitizationsurfaces}}] \label{thm:GT}  
  Let $X$ be a dense open subset of $\CC^2$ or $\pr^2$, and $\mathbf{f}\colon
  Y\to \TP^N$ a generically finite Laurent polynomial map of degree
  $\delta$ parameterizing the surface $Y$.  Let $X\subset
  \overline{X}$ be any normal and $\QQ$-factorial compactification
  whose boundary $W=W_1 \cup\ldots \cup W_k$ has combinatorial normal
  crossings, that is no three components intersect at a point. Let
  $\Delta_{\overline{X},W}$ be the dual graph of $W$, i.e.\ the graph on $\{1, \ldots, m\}$ defined
  by
 \[
 \{i, j\} \in \Delta_{\overline{X},W} \iff W_{i} \cap
 W_{j} \neq \emptyset.\]

 Define the integer vectors $[W_k] := (\val_{W_k}(f_1), \ldots,
 \val_{W_k}(f_N)) \in \ZZ^N$ ($k=1, \ldots, m$) where
 $\val_{W_k}(f_j)$ is the order of zero/pole of $f_j$ along $W_k$. We
 map the abstract graph $\Delta_{\overline{X},D}$ to a graph in $\RR^N$ by
 sending each vertex $i$ to $[W_i]$ and extending linearly on all
 edges.  Define the multiplicity of the edge $([W_i],[W_j])$ to be
\[
m_{([W_i],[W_j])}= \frac{1}{\delta}(W_i\cdot W_j) \operatorname{ index}\big((\RR\otimes_{\ZZ} \ZZ\langle[W_i],
[W_j]\rangle) \cap \ZZ^N : \langle[W_i],
[W_j]\rangle \big),
\]
where $W_{i} \cdot W_{j}$ denotes the intersection number of these
divisors.

Then, the tropical surface $\cT
 Y$ is the cone over this weighted graph.

\end{theorem}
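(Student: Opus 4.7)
The statement has two parts: (i) a set-theoretic equality $\cT Y = \mathrm{cone}(\Delta_{\overline{X},W})$ under the map $i\mapsto [W_i]$, and (ii) the compatibility of the weights. I would treat these in sequence. For the set-theoretic part, the starting point is the valuation-theoretic characterization of tropicalization (Bieri--Groves, Kapranov): a point $w\in\RR^N$ lies in $\cT Y$ if and only if there is a rank-one discrete valuation $\nu$ on $K(Y)=K(X)$ with $(\nu(f_1),\ldots,\nu(f_N))=w$, possibly after rescaling and taking limits. My goal is therefore to parameterize all such valuations using the combinatorics of the boundary $W\subset\overline{X}$.

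The first step is to invoke the valuative criterion of properness on the normal, $\QQ$-factorial compactification $\overline{X}$: every such $\nu$ has a center on $\overline{X}$. If the center is the generic point of a boundary component $W_i$, then $\nu$ is a positive integer multiple of $\mathrm{ord}_{W_i}$ and its image is a positive multiple of $[W_i]$. If the center is a closed point $p\in\overline{X}$, the combinatorial normal crossings hypothesis forces $p$ to lie on at most two boundary components, say $W_i\cap W_j$; in analytic local coordinates $W_i=\{x=0\}$, $W_j=\{y=0\}$. Any divisorial valuation centered at such a $p$ is, up to scaling, a \emph{monomial} valuation with $\nu(x)=a$, $\nu(y)=b$ for nonnegative integers $a,b$, and its image under $\mathrm{val}$ is $a[W_i]+b[W_j]$ (by expanding each $f_k$ in a power series in $x,y$ and applying $\nu$). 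Valuations centered at a point of $\overline{X}\setminus W$ produce the trivial vector $0$. Taking the closure and recalling that divisorial valuations are dense in the Berkovich analytification, this gives $\cT Y$ as the union of all cones $\RR_{\geq 0}\langle [W_i],[W_j]\rangle$ over edges $\{i,j\}$ of $\Delta_{\overline{X},W}$, together with all rays over vertices. This is exactly the cone over the realized graph.

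For the multiplicity part, I would fix an edge $\{i,j\}$, pick $w$ in the relative interior of the corresponding two-dimensional cone, and compute the multiplicity of $\cT Y$ at $w$ in two ways. On the tropical side, the multiplicity is determined by the pushforward under $\mathbf{f}$ of the fundamental class of $\overline{X}$ in the sense of Sturmfels--Tevelev; concretely, one sums over geometric preimages of a generic point of the corresponding cone in $\RR^N$, with multiplicity equal to the local degree of $\mathbf{f}$. On the geometric side, these preimages biject with points $p\in W_i\cap W_j$, each contributing the local intersection number $(W_i\cdot W_j)_p$, and the map $\mathbf{f}$ globally has degree $\delta$. The factor $\frac{1}{\delta}$ and the global intersection number $W_i\cdot W_j=\sum_p (W_i\cdot W_j)_p$ then arise from the projection formula. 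The lattice index correction accounts for the discrepancy between $\ZZ\langle[W_i],[W_j]\rangle$ and its saturation $\bigl(\RR\langle[W_i],[W_j]\rangle\bigr)\cap\ZZ^N$: an integer lattice vector in the cone may correspond to a valuation whose values at the $f_k$ are all integers even though it is not an integer combination of $[W_i]$ and $[W_j]$, and each such hidden vector contributes extra mass to the tropical weight.

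The main obstacle will be the multiplicity formula, not the set-theoretic statement. The set-theoretic equality is essentially formal once Bieri--Groves and the combinatorial normal crossings hypothesis are in hand. By contrast, matching the intersection-theoretic weight $(W_i\cdot W_j)$ and the lattice index $[\,\overline{\ZZ\langle[W_i],[W_j]\rangle}:\ZZ\langle[W_i],[W_j]\rangle\,]$ with the algebraic definition of tropical multiplicity via initial ideals requires either Tevelev's lemma on tropical compactifications or a direct deformation-to-the-normal-cone argument, and care must be taken with the factor $\delta$ since $\mathbf{f}$ is only generically finite rather than birational. This is the step where one genuinely uses $\QQ$-factoriality (to make intersection numbers well defined) and the precise refinement of HKT proved in \citep{Implicitizationsurfaces}.
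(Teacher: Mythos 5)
First, a point of order: the paper contains no proof of Theorem~\ref{thm:GT}. It is imported as a black box from the two cited sources (Hacking--Keel--Tevelev and the first author's earlier work on tropical implicitization of surfaces), so there is no in-paper argument to compare yours against. Measured against those sources, your overall strategy is the right one and essentially the standard one: characterize $\cT Y$ by divisorial valuations (Bieri--Groves/Kapranov), locate centers on the compactification via properness, sort valuations by which boundary strata contain their center, and obtain the multiplicities from the Sturmfels--Tevelev pushforward formula combined with intersection numbers on $\overline{X}$.

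There is, however, a genuine gap in your local analysis at a point $p\in W_i\cap W_j$. You choose analytic coordinates with $W_i=\{x=0\}$, $W_j=\{y=0\}$ and assert that every divisorial valuation centered at $p$ is, up to scaling, monomial in these coordinates. Both halves of this fail. Combinatorial normal crossings only forbids triple intersections; it does not make $W_i$ and $W_j$ smooth or transverse at $p$, so such coordinates need not exist --- and in this paper's application they genuinely do not, since the binomial curves are mutually tangent at the origin and at infinity. Moreover, even in the honest SNC case a divisorial valuation centered at $p$ need not be monomial: the order of vanishing along the exceptional divisor of a second blow-up centered at a free point of the first exceptional divisor assigns the value $2$ to $y-cx$ while every monomial valuation with $\nu(x)=\nu(y)=1$ assigns $1$, so ``expanding $f_k$ in a power series and applying $\nu$'' does not compute $\nu(f_k)$ as a monomial minimum. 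The correct argument sidesteps both problems: because each $f_k$ is a unit on $X$ and only $W_i,W_j$ pass through $p$, $\QQ$-factoriality gives a local factorization $f_k=g_i^{a_{ik}}g_j^{a_{jk}}u_k$ with $u_k$ a unit at $p$ and $a_{ik}=\val_{W_i}(f_k)$, whence $\nu(f_k)=a_{ik}\,\nu(g_i)+a_{jk}\,\nu(g_j)$ for \emph{any} valuation centered at $p$, and the image vector is $\nu(g_i)[W_i]+\nu(g_j)[W_j]$ regardless of the shape of $\nu$. Two further points: you only argue the inclusion of $\cT Y$ into the cone over the graph; the reverse inclusion still requires exhibiting (quasi-monomial) valuations at points of $W_i\cap W_j$ realizing every nonnegative pair $(a,b)$, or a dimension-and-balancing argument. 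And the multiplicity discussion remains a plan rather than a proof: identifying the initial-ideal definition of tropical multiplicity with $\tfrac{1}{\delta}(W_i\cdot W_j)$ times the lattice index is exactly the nontrivial content of the cited refinement, and the projection formula alone does not deliver it.
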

\begin{remark}\label{rm:SumMultGT}
  With the same notation, the multiplicity of a regular point
  $w$ in
  $\cT Y$ equals the sum of the multiplicities of all maximal cones in
  $\cT Y$ containing $w$. 
\end{remark}

To compute $\cT Y$ using the previous theorems, we need a method to
construct a normal $\QQ$-factorial compactification $\overline{X}
\supset Y$ whose boundary has \emph{combinatorial normal crossings
  (CNC)}.  In words, we requires each number of components of the
divisor $W$ to intersect at the expected dimension.  One method for
producing such a compactification is taking the closure $\overline{X}$
of $X$ in $\pr^2$ and resolving the singularities of the boundary
$\overline{X}\setminus X$ to fulfill the CNC
condition~\cite{Implicitizationsurfaces}. Along the way we record
intersection numbers among the boundary components. These numbers
allow us to compute tropical multiplicities, as stated in
Theorem~\ref{thm:GT}. 

\medskip

In what follows, we describe the resolution process of our binomial
surface $Z$ from Theorem~\ref{thm:MasterGraphIsTropical}.  Roughly
speaking, a full resolution gives us several extra (exceptional)
divisors that yield bivalent nodes in the dual graph. If we
contract these curves with negative self-intersection, we obtain a
singular surface whose boundary divisor has CNC. 

Recall that the surface $Z$ was parameterized by ${\bf f}=(f_0,
\ldots, f_n)\colon \CC^2\to Z$, where $f_j:=\ww^{i_j}-\lambda$ ($0\leq
j \leq n$).  Since geometric tropicalization involves subvarieties of
tori, we restrict the domain of the function ${\bf f}$ to the open set
$X=\CC^2\smallsetminus \bigcup_{j=0}^n (f_j=0)$. 
Our task is to
compactify the space $X$. 

We now explain in full detail the compactification process. First, we
na\"ively compactify $X$ inside $\pr^2$. The components of the
boundary divisor of $X$ are $D_{i_j}=(f_j^h(\ww,\lambda, u)=0)$ and
$D_{\infty}=(u=0)$, where $f_j^h$ is the homogenization of $f_j$ with
respect to the new variable
$u$. Figure
~\ref{fig:chartsInPn} illustrate this process in the case of the
binomial arrangement $X$ associated to the index set $\{0, 30, 45, 55,
78\}$ from 
Example~\ref{ex:RanestadMaster}. The black dots
indicate the intersection of three or more of the corresponding
binomial curves, whereas grey dots indicate
pairwise intersections. 

\begin{figure}[htb]
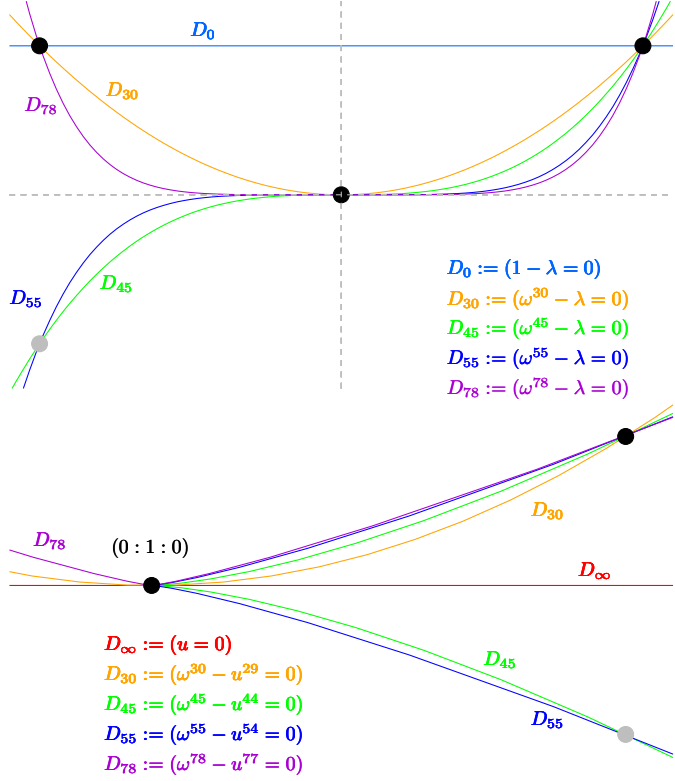

  \centering
  \includegraphics[scale=0.75]{Fig3A.0}

\includegraphics[scale=0.75]{Fig3B.0}
  \caption
  {From top to bottom: 
    $(u=1)$ and $(\lambda=1)$ affine charts describing the
    singularities of the surface $\overline{X}$ in $\pr^2$ given the set
    of exponents $\{0,30, 45,55, 78\}$.}
  \label{fig:chartsInPn}
\end{figure}

The boundary of $X$ in $\pr^2$ encounters three types of
singularities: the origin $(0:0:1)$, the point $(0:1:0)$ at infinity,
and 
singularities in $\TP^2$. We resolve them all by
blow-ups. 
After contracting appropriate exceptional curves, the resolutions
diagrams from Figures~\ref{fig:OriginBlowup}
and~\ref{fig:InfinityBlowup} are \emph{precisely} the graphs on the
left side of Figure~\ref{fig:OriginAndInfinityAndRest}. The nodes
$E_{i_j}$ ($1\leq j\leq n-1$) and $h_{i_j}$ ($2\leq j\leq n-1$)
correspond to exceptional divisors, whereas $h_{i_1}$ refers to the
strict transform of the divisor $D_{\infty}$. All intersection
multiplicities involving the divisors $E_{i_j}$ or $h_{i_j}$ equal
one. Following Theorem~\ref{thm:GT}, we see that the computation of
all multiplicities in the graph $\cT Z$ reduces to calculating
indices of suitable lattices associated to edges of $\cT Z$.

\begin{figure}[htr]
  \centering
  \includegraphics[scale=0.35]{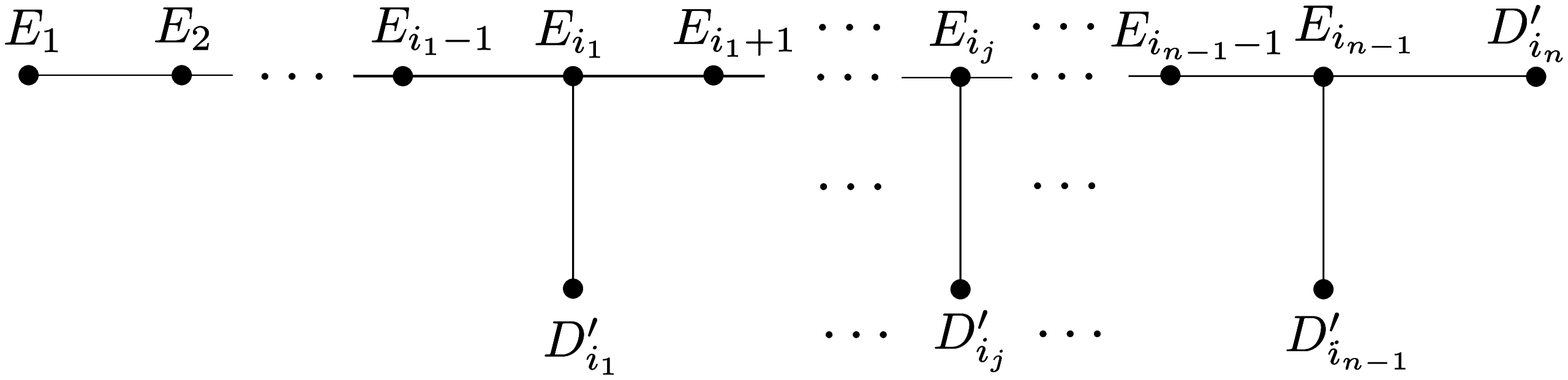}
  \caption{Resolution by blow-ups at the origin, where the $E_{i_j}$'s
    denote exceptional divisors and each $D'_{i_j}$ is the strict transform
    of the boundary divisor $D_{i_j}$.}
  \label{fig:OriginBlowup}
\end{figure}
We now describe the resolution process at the origin, whose local
behavior is illustrated in the top of Figure~\ref{fig:chartsInPn}. At
this singular point, all $n$ curves $D_{i_1}, \ldots, D_{i_n}$
intersect and they are tangential to each other. For any $j$, after a
single blow-up, and after applying the change of coordinates $\lambda
=w \lambda'$, we see that the strict transform of $D_{i_j}$ is
isomorphic to $D_{i_j-1}$ for all $1\leq j \leq n$. This implies that
we can resolve the singularity at the origin after $i_{n-1}$
blow-ups. Moreover, we may use the previous isomorphism to compute the
pull-back of each divisor $D_{i_j}$, one step at a time. For example,
after the first blow-up $\pi_1$, the proper transform of $D_{i_j}$
equals $\pi_1^{*}(D_{i_j})= D_{i_j}' + E_{1}$, where $E_{1}=(w=0)$ is
the exceptional divisor and $D_{i_j}'$ is the strict transform of
$D_{i_j}$. After a second blow-up $\pi_2$, we get $\pi_2^*(E_1)= E_1'
+E_2$, and $\pi_2^*(D_{i_j}')-E_{2}=D_{i_j}''\simeq D_{i_j-2}$, so
$(\pi_1\circ \pi_2)^*(D_{i_j})=D_{i_j}'' + E_{1}' +2E_2$ with
$D_{i_j}''\simeq D_{i_j-2}$ and $E_1'\cdot D_{i_j}''= 0$.  To simplify
notation, we label all exceptional divisors by $E_{l}$ and for each
$j$ we let $D_{i_j}'$ be the strict transform of $D_{i_j}$ under the
composition $\pi$ of all blow-ups.  All exceptional divisors satisfy:
\begin{center}
  \begin{minipage}[c]{0.4\linewidth}
    \[
    E_{l}\cdot E_k =
    \begin{cases}
      1 & \text{ if } |l-k|=1,\\
      0 & \text{ otherwise,}
    \end{cases}
    \]
  \end{minipage}
  \begin{minipage}[c]{0.4\linewidth}
    \[
    D_{i_j}'\cdot E_l =
    \begin{cases}
      1 & \text{ if } l=i_j,\\
      0 & \text{ otherwise.}
    \end{cases}
    \]
  \end{minipage}
\end{center}

\noindent
Proceeding by induction, we obtain
\begin{equation}
\pi^*(D_{i_j}) = D_{i_j}' + \sum_{l=1}^{i_j} l \cdot E_l
+\sum_{l={i_j+1}}^{i_{n-1}} i_j\cdot  E_l \qquad \qquad 1\leq j \leq n.
\label{eq:7}
\end{equation}
By convention, the sum over an empty set equals 0. 

Figure~\ref{fig:OriginBlowup} illustrates the resolution diagram at
the origin. If
we eliminate the bivalent nodes $E_l$ from
Figure~\ref{fig:OriginBlowup} by contracting the corresponding curves
with negative self-intersection, we obtain the graph $G_{D,E}$
depicted in the left side of
Figure~\ref{fig:OriginAndInfinityAndRest}, where, by abuse of
notation, the strict transform of $D_{i_j}$ is also denoted by
$D_{i_j}$. From~\eqref{eq:7} we see that the divisorial valuation of
each exceptional divisor gives the integer vector $E_{i_j}$ described
in Theorem~\ref{thm:tropSecGraph}.
\smallskip

At infinity, the resolution process is more delicate. The toric
arrangement in the corresponding affine chart of $\pr^2$ is depicted
in the bottom of Figure~\ref{fig:chartsInPn}.  Here, the singular
point $p=(0:1:0)$ corresponds to the intersection of $D_{\infty}$ and
all divisors $D_{i_j}$ with $i_j\geq 2$. All prime divisors $D_{i_j}$,
$i_j\geq 2$ have a singularity at $p$, so we first need to perform a
blow-up at this point to smooth them out. More precisely, if $\pi_0$
denotes this blow-up we obtain
\[
\pi_0^*(D_{i_j})=D_{i_j}'+ (i_j-1)H\;,\qquad
\pi_0^*(D_{\infty}) = D_{\infty}' +H,
\]
where $H=(t=0)$ is the exceptional divisor and
$D_{i_j}'=(\ww-t^{i_j-1})\simeq D_{i_j-1}$, $D_{\infty}'=(w=0)$ are
the strict transforms of the corresponding curves.

As the reader may have discovered already, the setting after applying
$\pi_0$ is very similar to the one we described for the singularity at
the origin, although there are some minor local differences between
them 
that are worth pointing out. Firstly, there is a singularity coming
from theintersection of the divisors $D_{i_s-1}, \ldots, D_{i_n-1}$,
where $s$ is the minimum index satisfying $i_s\geq 2$. This singular
point plays the role of the origin in the chart $(u=1)$. In addition,
there are two extra divisors $D'_{\infty}$ and $H$, passing through
this point. These curves had no counterpart at the chart containing
the origin.  Along the resolution, $D_{\infty}'$ is separated from the
other divisors after a single blow-up, but the strict transform of
$H$ is tangential to the strict transform of \emph{all}
divisors $D_{i_j}'$ that meet $H$.

The resolution diagram at infinity is shown in
Figure~\ref{fig:InfinityBlowup}. In that picture,
all exceptional divisors are denoted by $h_{l}$ ($2\leq l\leq i_n$)
and we label the strict transforms of $D_{\infty}, D_{i_j}$ and $H$ by
$D_{\infty}''$, $D_{i_j}''$ and $H'$ respectively.
The pull-backs under the composition $\pi$ of the last
$i_n-1$ blow-ups give:
 \begin{equation*}\left\{
   \begin{aligned}
     \pi^{*}(D_{\infty}')&\!\!\! = D_{\infty}''
     + \sum\limits_{l=2}^{i_n}\;\;  h_l,\qquad  
     \pi^*(H)  \!\!\! =  H' \;\,+ \sum\limits_{l=2}^{i_n} (l-1)\cdot h_l,\\
     \pi^*(D_{i_j}') &\!\!\! = D_{i_j}''\, + \sum\limits_{l=2}^{i_j}
     (l-1) \cdot h_l + \sum\limits_{l=i_j+1}^{i_n} (i_j-1) \cdot
     h_l\qquad (i_j\geq 2).
   \end{aligned}
\right.
\end{equation*}
Composing $\pi$ with the initial blow-up $\pi_0$ at the point $(0:1:0)$, we get:
\begin{equation*}
  \begin{cases}
    (\pi\circ \pi_0)^*(D_{i_j}) &\!\!\! = D_{i_j}'' + (i_j-1) H' + \sum\limits_{l=2}^{i_j}i_j(l-1)
    \cdot h_l + \sum\limits_{l=i_j+1}^{i_n} (i_j-1) l \cdot h_l \quad (i_j\geq 2),\\
    (\pi\circ \pi_0)^*(D_{\infty}) & \!\!\!= 
 H' + D_{\infty}''+ \sum\limits_{l=2}^{i_n} l \cdot h_l.
  \end{cases}
\end{equation*}
All intersection numbers $h_{l} \cdot h_{l+1}$, $h_{i_j}\cdot
D_{i_j}''$, $D_{\infty}''\cdot h_2$ and $h_{i_n}\cdot H'$ equal one,
whereas all other pairs have intersection number zero. In addition, we
know that $D_{\infty}$ intersects $D_0$ at a point, thus
$D_{\infty}''\cdot D_0=1$. Finally, the divisor $D_{\infty}$  also
intersects $D_{i_1}$ at a point different from $(0:1:0)$, only if
$i_1=1$. Thus, $D_{\infty}''\cdot D_{1}=1$ if $i_1=1$ or $0$ in all
other cases. 

\begin{figure}[htr]
  \centering
  \includegraphics[scale=0.4]{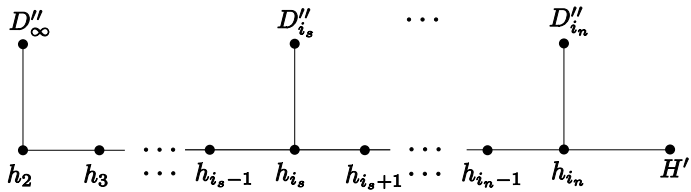}
  \caption{Resolution by blow-ups at infinity. Here, $s$
    is the minimum index with $i_s\geq 2$.}
  \label{fig:InfinityBlowup}
\end{figure}

We now explain the transition from the resolution diagram at infinity
to the graph $G_{h,D}$, depicted at the bottom-left of
Figure~\ref{fig:OriginAndInfinityAndRest}.  As we did when blowing up
the origin, we only keep the $n-1$ exceptional divisors $h_{i_2},
\ldots, h_{i_n}$ giving non-bivalent nodes in the resolution
diagram. In addition, we contract the strict transform $H'$ of the
exceptional divisor $H$, since it has negative self-intersection.

The degree of the node $D_{\infty}''$ in the dual graph is
determined by the value of the index $s$. If $i_1\geq 2$, then $s=1$
and $D_{\infty}''$ is a bivalent node adjacent to $D_0$ and $h_{i_1}$,
so we remove it from the resolution diagram. On the contrary, if
$i_1=1$, then $s=2$ and $D_{\infty}''$ has degree 3: it is adjacent to
the nodes $D_0$, $D_1$ and $h_{i_2}$. The node $h_{i_1}$ in $G_{h,D}$
corresponds to the divisor $D_{\infty}''$.  In both cases, and after
removing all bivalent nodes and the node associated to $H'$, we get
the graph $G_{h,D}$.  \medskip

We now study multiple intersections between divisors in $\TP^2$. If
$(\omega, \lambda)$ satisfies $f_j = \omega^{i_j}-\lambda = 0$ and
$f_k=\omega^{i_k}- \lambda = 0$, then $\omega^{i_j} = \lambda =
\omega^{i_k}$, so $\omega$ is a primitive $r^{\text{th}}$ root of
unity for some $r\mid (i_k-i_j)$. Equivalently, $i_j \equiv i_k \equiv
u \pmod{r}$, $\omega= e^{2\pi ip/r}$ and $\lambda = \omega^u$ for some
integer $p$ coprime to $r$. All other curves $(f_l =0)$ with $i_l
\equiv u \pmod{r}$ also meet at $(\omega, \lambda)$. We represent this
crossing point by $x_{p,r,u}$ and the indices of all curves meeting at
$x_{p,r,u}$ by $\aaa_{r,u}$, or $\aaa$ for short. That is,
$$
x_{p,r,u}=( e^{2\pi ip/r}, e^{2\pi ipu/r}), \quad \aaa=\aaa_{r,u} :=
\{i_j \,\mid \, i_j \equiv u \,\,(\mbox{mod }r)\}.
$$
Furthermore, the gradients of the curves meeting at the
point $x_{p,r,u}$ are pairwise independent, so the curves intersect
transversally at $x_{p,r,u}$.

If three or more curves meet at a point $x_{p,r,u}$ in $\TP^2$, we
blow up this point to separate the curves.  After a single blow-up, 
we obtain a new exceptional divisor $F_{\aaa, x_{p,r,u}}$ which
intersects the strict transform of all ${D}_{i_j}$ ($i_j\in \aaa$)
with multiplicity one.  The resolution diagram is the graph
$G_{F_{\aaa},D}$ on the right-hand side of
Figure~\ref{fig:OriginAndInfinityAndRest}, where we identify the
node $F_{\aaa}$ with the corresponding divisor $F_{\aaa,
  x_{p,r,u}}$. From these intersection numbers, we conclude that the
divisorial valuation of the exceptional divisor $F_{\aaa, x_{p,r,u}}$
gives $[F_{\aaa,x_{p,r,u}}]= \sum_{i_j\in\aaa} e_j$ for all
intersection points $x_{p,r,u}$ coming from the same subset $\aaa$.
Thus, we get a single integer vector $F_{\aaa} = \sum_{i_j \in \aaa}
e_j$ in the realization of the dual graph, as desired. This
explains the notation chosen for the graph $G_{F_{\underline{a}},D}$
in Figure~\ref{fig:OriginAndInfinityAndRest}, where we accounted
only for the indices of divisors intersecting at a point, rather than
recording the point $x_{p,r,u}$ itself. To simplify the computation of
multiplicities in the tropical surface $\cT Z$, we also blow up
crossings with $|\aaa| = 2$. Such blow-ups give bivalent nodes
that we can easily discard in the end.
\medskip

Next, we compute the divisorial valuations of all boundary components
in our compactification. 
%
%
First, we extend the original parameterization of $Z$ from $\TP^2$ to
$\pr^2$. 
The extended map is defined
as 
\[{\bf f}\colon X\subset \pr^2 \to Z\cap \TP^{n+1} \quad
{\bf f}(\ww,\lambda,u)  = \big(\frac{u-\lambda}{u} ,
\frac{\ww^{i_1} -\lambda u^{i_1-1}}{u^{i_1}}, \ldots, \frac{\ww^{i_n}-\lambda
u^{i_n-1}}{u^{i_n}}\big),
\] that is,
${\bf f}(\ww,\lambda, u)= (f_j^h(\ww, \lambda,
u)/u^{\deg(f_j)} )_{i=0}^{n}$.  We compose ${\bf f}$ with the
resolution $\pi$ 
to get the map ${\bf \tilde{f}}=\pi \circ {\bf f}\colon \tilde{X}\dashrightarrow Z\cap
\TP^{n+1}$. 

We know that the functions ${ f_0}, \ldots, { f_{n}}$ are units
on ${X}$ and rational functions on the closure of $X$ in $\pr^2$: they
are pullback under ${\bf{f}}$ of the characters of $\TP^{n+1}$. In
particular, they have zeros and poles only along the boundary of
$\overline{X}$.  By the universal property of the blow-up, the same
holds for $\tilde{X}$ and the functions ${ \tilde{f}_1}, \ldots,
{ \tilde{f}_n}$, since they are pullback under ${\bf \tilde{f}}$ of
the characters of $\TP^{n+1}$. Therefore,
 \[
({ \tilde{f}_0}) = \pi^*({
  f_0})=\pi^*( D_{i_0}-D_{\infty})\quad \text{and}\quad ({ \tilde{ f}_j})=
\pi^*(D_{i_j} - i_j D_{\infty})\quad\text{for }j\geq 1.
\]
For simplicity and to agree with the notation of the graphs in
Figure~\ref{fig:OriginAndInfinityAndRest}, we denote strict
transforms of all divisors with the label of the corresponding
original divisors.  With this convention,
$({ \tilde{f}_j})$ equals
\begin{equation*}
\begin{cases}
    D_{i_j} +  \! \sum\limits_{l=1}^{i_j} l \!\cdot\! E_l
+ \!\sum\limits_{l={i_j+1}}^{i_{n-1}} \! i_j\cdot  E_l -
\;\;\;\;D_{\infty} - H -\sum\limits_{l=2}^{i_j} l \!\cdot\! h_l
\,-\!\sum\limits_{l=i_j+1}^{i_n} l \!\cdot\! h_l  
+\!   \sum\limits_{\substack{\aaa \ni i_j\\ x_{p,r,u}}}
 \! F_{\aaa, x_{p,r,u}}  &  \text{ if } i_j< 2,\\
     D_{i_j}  + \! \sum\limits_{l=1}^{i_j}\! l \!\cdot E_l
+  \!\sum\limits_{l={i_j+1}}^{i_{n-1}}   i_j\!\cdot\!  E_l   -i_j
\!\cdot\! D_{\infty} - H -
 \sum\limits_{l=2}^{i_j}  i_j \!\cdot\! h_l
-\!\sum\limits_{l=i_j+1}^{i_n}\! l \!\cdot\! h_l +
 \! \sum\limits_{\substack{\aaa \ni i_j\\ x_{p,r,u}}}
 \! F_{\aaa,
       x_{p,r,u}} &  \text{ else }.
  \end{cases}
\end{equation*}
The corresponding divisorial valuations are read off from the
columns of the matrix of coefficients of $(\tilde{f}_j)_{j=1}^n$
with respect to the divisors $D_{i_j}, E_{i_j}, h_{i_j}$, $H$ and
$F_{\aaa, x_{p,r,u}}$.  Using Theorem~\ref{thm:GT}, we get
the following nodes in the tropical variety $\cT Z$:
\begin{equation}\label{eq:8}
  \left\{\begin{aligned}
\  \!
    [D_{i_j}] &=e_{j}\;
,\;\; [H ]= -{\bf 1} 
\;,\;\;
    [F_{\aaa, x_{p,r,u}}] =\sum\limits_{i_j\in \aaa} e_j
 \;,\;\;
[D_{\infty}]
= - \sum_{
     i_j<2}e_j - \sum_{ i_j\geq 2} i_j \cdot e_j\;
,\;\;\\
 [E_l] &= \sum_{
      l\leq i_j} l \cdot e_j + \sum_{ l > i_j} i_j \cdot e_j
   \quad \qquad\;\;(1\leq l\leq i_{n-1})
\;
,\;\;\\
    [h_l]&= -\sum\limits_{ i_j<l}l \cdot e_j - \sum\limits_{l\leq i_j}
    i_j \cdot e_j
  \qquad  \;\;(2\leq l\leq i_n)\;.
  \end{aligned}\right.
\end{equation}
We see that $[h_{i_n}]= i_n [H]$, so the cone over the edge $h_{i_n}H$
in the realization of the dual graph is one-dimensional. This explains
why we do not see the divisor $H$ in the graph $G_{h,D}$ from
Figure~\ref{fig:OriginAndInfinityAndRest}. Likewise $i_1\cdot [F_{i_1,
  \ldots, i_n}] = [E_{i_1}]$ if $\gcd(i_n-i_1, \ldots, i_2-i_1)\neq
1$, so the cones over the edges $F_{i_1, \ldots, i_n}D_{i_n}$ and
$E_{i_1}D_{i_1}$ agree. In this case, we replace these two cones by a
single cone
, adding the two weights.  However, these are not the
only identifications we can perform to simplify our construction. 
The next result implies that we can eliminate the bivalent nodes
$E_{l}, h_l$ ($l\neq i_j$) as well as the nodes $h_{i_n}$ and
$D_{\infty}$ from this graph. Roughly speaking, it says that the
bivalent nodes $E_{i_l}$ and $h_{i_l}$ are contained in the
two-dimensional cones spanned by the corresponding nodes $E_{i_j},
E_{i_{j+1}}$ and $h_{i_j}h_{i_{j+1}}$ with $i_j<l<i_{j+1}$, and
similarly for $D_{\infty}$. It also asserts that there are no overlaps
between cones over the edges other than the one we already mentioned.
Using these two facts we can reduce our resolution graphs to
$G_{E,D}$, $G_{h,D}$, and $G_{F_{\aaa},D}$, thus proving the set
theoretic equality in Theorem~\ref{thm:MasterGraphIsTropical}. Recall
that $s$ is the minimum index such that $i_s\geq 2$.
\begin{lemma}\label{lm:coarseningTropicalgraph}
With the notation of~\eqref{eq:8}, 
the following  have equalities hold:
\begin{enumerate}\item 
$\RR_{\geq
    0} \langle [E_{l}], [E_{l+1}]\rangle \bigcap \RR_{\geq 0}
\langle  [E_{l+1}], [E_{l+2}] \rangle = \RR_{\geq 0}\langle
[E_{l+1}]\rangle $ ($i_j\leq l
  \leq i_{j+1}-2$, $0<j<n-1$);
\item
$  \RR_{\geq 0} \langle [E_{i_j}], \ldots, [E_{i_{j+1}}]\rangle =
\RR_{\geq 0}\langle [E_{i_j}], [E_{i_{j+1}}]\rangle$ ($1\leq j \leq {n-2}$);
\item
$\RR_{\geq 0}
\langle [h_{l}], [h_{l+1}]\rangle \bigcap \RR_{\geq 0} \langle [h_{l+1}], [h_{l+2}]
\rangle = \RR_{\geq 0}\langle [h_{l+1}] \rangle $ ($2\leq i_j\leq l
  \leq i_{j+1}-2$, $0<j<n$);
\item $\RR_{\geq 0} \langle [h_{i_j}], \ldots, [h_{i_{j+1}}]\rangle =
\RR_{\geq 0}\langle [h_{i_j}], [h_{i_{j+1}}]\rangle$ ($1\leq j \leq {n-1}$);
\item  $[h_{i_n}]\in \RR_{\geq 0}\langle [h_{i_{n-1}}], [D_{i_n}]\rangle$;

\item $\RR_{\geq
  0}\langle [E_{1}], \ldots, [E_{i_1}]\rangle =\RR_{\geq 0}\langle
[E_{i_1}]\rangle $ and $\RR_{\geq 0}\langle [h_{2}], \ldots, [h_{i_s}]\rangle = 
\RR_{\geq 0}\langle [h_{2}], [h_{i_s}]\rangle$;
\item If $s=1$, then 
$[D_\infty]=[h_{i_1}]$;
  if $s=2$, then 
  $\RR_{\geq 0}\langle [D_{\infty}],  [h_{i_1}], [h_{i_2}]
\rangle = 
\RR_{\geq 0}\langle [h_{i_1}] , [h_{i_2}]\rangle$.
\end{enumerate}
Moreover, among maximal cones over the master graph, there are no
two-dimensional intersections except when $F_{\underline{e}}$ is a
node in the master graph where $\underline{e}=\{i_1, \ldots, i_n\}$.
In this case, $i_1[F_{\underline{e}}]=[E_{i_1}]$ and $\RR_{\geq 0}
\langle [F_{\underline{e}}], [D_{i_1}]\rangle = \RR_{\geq 0} \langle
[E_{i_1}], [D_{i_1}]\rangle$.
\end{lemma}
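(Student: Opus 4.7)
The plan is to verify each of (i)--(vii) and the ``moreover'' clause by direct computation using the explicit formulas \eqref{eq:8} for the divisorial valuations. The key unifying observation is that when $l$ varies inside an interval $i_j \le l \le i_{j+1}$, the sign conditions ``$l \le i_k$'' and ``$i_k < l$'' appearing in \eqref{eq:8} stabilise for each fixed $k$, so both $[E_l]$ and $[h_l]$ become affine-linear functions of $l$. Concretely, I would first establish the identities
\[
[E_l] \;=\; \sum_{k \le j} i_k\, e_k \;+\; l\sum_{k > j} e_k, \qquad [h_l] \;=\; -\,l\sum_{k \le j} e_k \;-\; \sum_{k > j} i_k\, e_k
\]
for $i_j \le l \le i_{j+1}$ (with $l \ge 2$ in the second case). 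Consequently, as $l$ runs through this interval, $[E_l]$ (resp.\ $[h_l]$) traces an affine segment with endpoints $[E_{i_j}], [E_{i_{j+1}}]$ (resp.\ $[h_{i_j}], [h_{i_{j+1}}]$). Parts (i)--(iv) follow at once: each intermediate vector is a convex combination of its two neighbours, and the positive cone spanned by the whole family collapses to the 2D cone over the two endpoints. Part (vi) is the same argument applied to the extreme intervals $[1, i_1]$ and $[2, i_s]$; on the first, $[E_l]$ further simplifies to $l\sum_{k \ge 1} e_k$, so $[E_1],\ldots,[E_{i_1}]$ all lie on a single ray through $[E_{i_1}]$.

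For (v) I would exhibit the explicit nonnegative combination
\[
[h_{i_n}] \;=\; \tfrac{i_n}{i_{n-1}}\,[h_{i_{n-1}}] \;+\; \tfrac{i_n(i_n - i_{n-1})}{i_{n-1}}\,[D_{i_n}],
\]
checked coordinate-by-coordinate using $[h_{i_n}] = -i_n\mathbf{1}$ and $[h_{i_{n-1}}] = -i_{n-1}\sum_{k < n} e_k - i_n e_n$. Part (vii) splits by the value of $s$. When $s = 2$, we have $i_1 = 1$ and the node $h_{i_1}$ of the master graph denotes the strict transform $D_\infty''$ itself (as explained in the paragraph preceding the lemma), so $[h_{i_1}] = [D_\infty]$ by definition and the claimed cone equality is immediate. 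When $s = 1$, we have $i_1 \ge 2$ and a direct computation gives the nonnegative combination $[D_\infty] = (i_1-1)[D_{i_0}] + [h_{i_1}]$, which places $[D_\infty]$ on the 2D cone over the edge $D_{i_0}h_{i_1}$; equivalently, $[D_\infty] = (1/i_1)\,[h_{i_1}] + \tfrac{1-i_1}{i_1}(0,i_1,\ldots,i_n)$, showing that $[D_\infty]$ and $[h_{i_1}]$ span the same ray modulo the lineality space $\RR\langle\mathbf{1},(0,i_1,\ldots,i_n)\rangle$.

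The main obstacle is the final ``moreover'' clause: ruling out all other 2D overlaps between maximal cones of the reduced master graph. The identified exception is easy to spot from the linearity formula above: if $\underline{e} = \{i_1,\ldots,i_n\}$ indexes a node, then $[F_{\underline{e}}] = \sum_{k \ge 1} e_k$ and $[E_{i_1}] = i_1\sum_{k \ge 1} e_k$, so $i_1[F_{\underline{e}}] = [E_{i_1}]$ and the two 2D cones with $[D_{i_1}]$ coincide. For the non-coincidence, I would classify the maximal cones by the types of their extremal rays ($D$-$E$, $D$-$h$, $D$-$F_{\aaa}$, $E$-$E$, $h$-$h$) and exploit the distinguishing features of each family: the $[D_{i_j}]=e_j$ lie along coordinate axes, the $[E_{i_j}]$ are nonnegative with a monotone step-function pattern, the $[h_{i_j}]$ are their nonpositive analogues, and the $[F_{\aaa}]$ are $\{0,1\}$-vectors indexed by arithmetic progressions inside $\{0,i_1,\ldots,i_n\}$. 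Sign considerations rule out most cross-type overlaps immediately; the delicate subcases are the comparisons between two $D$-$F_{\aaa}$ cones for different sets $\aaa$, and between $E$-$E$ cones and $D$-$F_{\aaa}$ cones, where the arithmetic-progression constraint on $\aaa$ is essential to prevent any spurious 2D overlap beyond the $\underline{e}$ exception.
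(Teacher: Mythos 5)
Your plan is essentially the paper's own argument: both hinge on the observation that for $l$ ranging in an interval $[i_j,i_{j+1}]$ the vectors $[E_l]$ and $[h_l]$ of~\eqref{eq:8} depend affinely on $l$, so that (i)--(iv) and (vi) reduce to the elementary convex-combination identities you state (the paper records exactly these), and the final ``moreover'' clause is treated in both cases by an exhaustive check of pairwise intersections of maximal cones, with the single coincidence $i_1[F_{\underline{e}}]=[E_{i_1}]$. You in fact give more detail than the printed proof on (v) --- your combination $[h_{i_n}]=\tfrac{i_n}{i_{n-1}}[h_{i_{n-1}}]+\tfrac{i_n(i_n-i_{n-1})}{i_{n-1}}[D_{i_n}]$ is correct --- and on (vii), which the paper's proof does not address explicitly.

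Two caveats concern (vii). First, your case split is the reverse of the lemma's wording: the lemma attaches the equality $[D_\infty]=[h_{i_1}]$ to $s=1$, but with~\eqref{eq:8} one has $[D_\infty]=(-1,-i_1,\ldots,-i_n)$ while $[h_{i_1}]=(-i_1,-i_1,-i_2,\ldots,-i_n)$ whenever $i_1\geq 2$, so the literal equality can only hold when $i_1=1$ (i.e.\ $s=2$, where the master-graph node $h_{i_1}$ is the strict transform $D_\infty''$). What you prove --- equality for $s=2$, and $[D_\infty]=(i_1-1)[D_{i_0}]+[h_{i_1}]$ for $s=1$, which is precisely what the reduction to $G_{h,D}$ requires --- is the mathematically correct content, but you should state explicitly that you are proving a corrected version of item (vii) rather than the sentence as printed. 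Second, delete the remark that $[D_\infty]$ and $[h_{i_1}]$ agree ``modulo the lineality space $\RR\langle\mathbf{1},(0,i_1,\ldots,i_n)\rangle$'': that lattice is the lineality space of $\cT\, Sec^1(C)$, not of $\cT Z$, which is a pointed fan, so no such reduction is available at this stage; your exact nonnegative combination already suffices. Finally, like the paper, you only sketch the pairwise-cone analysis in the ``moreover'' clause; the sign/type classification you outline is the right organizing principle, but the comparisons between two $D$--$F_{\aaa}$ cones and between $E$--$E$ and $D$--$F_{\aaa}$ cones would still have to be carried out in full to make the argument complete.
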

\begin{proof}
  We prove the identities involving the rays $[E_{l}]$ ($1\leq l \leq
  i_{n-1}$) in $(i)$ and $(ii)$. The claims for $[h_l]$ in $(iii)$ and
  $(iv)$ can be proven analogously. Assume $i_j\leq l\leq
  i_{j+1}-2$. Then $[E_{l+1}]=[E_l] + \sum_{k\geq j+1} e_k$ and
  $[E_{l+2}]=[E_l] + 2\sum_{k\geq j+1} e_k$, and the first identity
  follows by simple linear algebra arguments.

To prove the second claim, it suffices to show that $[E_l]\in \RR_{\geq
  0}\langle [E_{i_j}], [E_{i_{j+1}}]\rangle$ if $i_j< l <i_{j+1}$. In
fact, by linear algebra calculations, we obtain 
$[E_l]= \frac{i_{j+1}-l}{i_{j+1}-i_j} \cdot
[E_{i_{j}}]+\frac{l-i_{j}}{i_{j+1}-i_j} \cdot [E_{i_{j+1}}]$. 
The identities in $(vi)$ are a direct consequence of the equalities
$[E_{l}] =l\sum_{j\geq 1}e_j=\frac{l}{i_1}[E_{i_1}]$, and
$[h_l]=\frac{i_s-l}{i_s-2} [h_2] + \frac{l-2}{i_s-2} [h_{i_s}]$ for
all $2\leq l \leq i_s$.

To prove $(vii)$ we consider all pairs of maximal cones and
compute their intersection. We get
either the origin or the cone over a node in the master graph. 
\end{proof}


Next, we compute the weights of all edges in the
$\cT Z$ 
using
Theorem~\ref{thm:GT} and the map $\mathbf{\tilde{f}}$. 
From the 
resolution 
$\tilde{X}$, we know that the intersection number of any two
boundary  curves is zero or one. Using
Lemma~\ref{lm:coarseningTropicalgraph}, we see that there are no
two-dimensional overlaps, 
except for the cones over the edges $D_{i_1}E_{i_1}$ and $F_{i_1,
  \ldots, i_n}D_{i_1}$. The degree of the map $\mathbf{\tilde{f}}$ is one.

With the exception of the edge $D_{i_1}E_{i_1}$, the formula for
computing weights on the edges containing $D_{i_j}$, $h_{i_j}$,
$E_{i_j}$ 
involves a single summand, namely the corresponding
lattice index. This number is the gcd of the $2\times 2$-minors of a
matrix whose rows are the two nodes of each edge, and it agrees with the
weights assigned to the master graph.

To end, we obtain the multiplicity of the cones over the edges
$F_{\aaa} D_{i_j}$ in $\cT Z$, with $\aaa\neq \{i_1, \ldots,
i_n\}$. In this case, all summands in the formula equal one and so the
multiplicity equals the number of summands.  The summands are in
one-to-one correspondence with the crossing points $x_{p,r,u}$, where
$\aaa=\{i_j\, \mid \, i_j \equiv u\,\,\mbox{\text{mod }r}\}$ and $p$
is coprime to $r$. Therefore, the number of summands is $\sum_{r}
\varphi(r)$, where the sum is over all possible common differences $r$
of arithmetic sequences giving the same set $\aaa$.
Finally, if $\underline{e}=\{i_1, \ldots, i_n\}$ gives a node
$F_{\underline{e}}$ in the master graph, the divisors $E_{i_1}$ and
$F_{\underline{e}}$ map to proportional rays $[E_{i_1}]$ and
$[F_{\underline{e}}]$.  The multiplicities of the cones over the edges
$F_{\underline{e}}D_{i_j}$ with $j\geq 2$ equal $\sum_r \varphi(r)$
for all common differences $r$ generating the set $\underline{e}$. The
formula to compute the weight of the edge $F_{\underline{e}}D_{i_1}$
has an extra summand: the one involving the term
$E_{i_1}D_{i_1}$. Hence, $F_{\underline{e}}D_{i_1}$ has weight
$m_{D_{i_1},E_{i_1}} + m_{F_{i_1, \ldots, i_n}, D_{i_1}}= i_1 +\sum_r
\varphi(r) $. This concludes our proof of
Theorem~\ref{thm:MasterGraphIsTropical}.


\section
  {The master graph under Hadamard products} 
\label{sec:trop-secant-graph}

In this section, we use the 
master graph to construct a new weighted graph: the \emph{tropical
  secant graph}.  This graph encodes the tropicalization of the first
secant variety of a monomial projective curve $C$ parameterized by
$(1: t^{i_1}: \ldots: t^{i_n})$, where $0= i_0 \leq i_1 \leq \ldots
\leq i_n$ are integers.  We define the first secant variety of the
curve $C$ as
\begin{equation*}
Sec^1(C)=\overline{\{a \cdot {p} + b \cdot {q}\,\mid\,
   {p}, {q}\in C, (a:b)\in \pr^1\}}\subset \pr^n.
\end{equation*}

As discussed in Section~\ref{sec:tropical-geometry}, tropicalizations
are toric in nature. Thus, for the rest of this section, instead of
looking at the projective varieties $C$ and $Sec^1(C)$, we study the
corresponding very affine varieties obtained by intersecting their
affine cones in $\CC^{n+1}$ with the torus $\TP^{n+1}$. To simplify
notation, we also denote them by $C$ and $Sec^1(C)$. The
tropicalizations of the projective varieties and their corresponding
very affine varieties are the same, but we think of the projective one
as living in the tropical projective torus
$\TP\pr^{n}:=\RR^{n+1}/\RR\,\langle \mathbf{1}\rangle$ rather than in $\RR^{n+1}$,
reducing its dimension by one. We parameterize the secant variety by the
\emph{secant map} 
\begin{equation}
\phi\colon \TP^{4} \dashrightarrow
\TP^{n+1}, \quad \phi(a,b,s,t) = (as^{i_k}+bt^{i_k})_{0\leq k \leq n}
.\label{eq:SecantMap}
\end{equation}
After a monomial change of coordinates $b = -\lambda a$ and $t =
\omega s$, we rewrite $\phi$ as 
\begin{equation}
\phi(a,s,\omega,\lambda) = \big(as^{i_k} \, (\omega^{i_k} -
\lambda)\big)_{0\leq k \leq n}.\label{eq:2}
\end{equation}
From this observation, it is natural to consider the Hadamard product of subvarieties of tori:

\begin{definition}\label{def:star}
  Let $X,Y\subset \TP^{N}$ be subvarieties of tori.  The
  \emph{Hadamard product} of $X$ and $Y$ equals
\[
X \centerdot Y = \overline{\{(x_1y_1, \ldots,
    x_{N}y_{N}) \, |\, x\in X, y\in Y\}}\subset \TP^{N}.
\]
\end{definition}

From the construction, we get the following characterization of our
secant variety, where $Z$ is precisely the surface $Z$ from Theorem~\ref{thm:MasterGraphIsTropical}.
\begin{proposition}\label{pr:ReparamSecant}
   Let $C$ be the monomial curve $(1:t^{i_1}: \ldots:
  t^{i_n})$ and let $Z$ be the surface parameterized by $(\lambda, \omega)
  \mapsto (1-\lambda, \omega^{i_1} - \lambda, \ldots, \omega^{i_n} -
 \lambda)$. Then, the first secant variety $Sec^1(C)\subset \TP^{n+1}$
  is the Hadamard product $C \centerdot Z$.
\end{proposition}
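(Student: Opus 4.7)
The plan is to verify the equality $Sec^1(C) = C \centerdot Z$ by exhibiting an explicit monomial reparametrization of the secant map, following the computation already sketched in equations \eqref{eq:SecantMap} and \eqref{eq:2}. First I would fix parametrizations compatible with the very affine setting: a generic point of $C\subset \TP^{n+1}$ has the form $(a t^{i_k})_{0\le k\le n}$ for $(a,t)\in (\CC^*)^2$ (using $i_0=0$), and a generic point of $Z$ has the form $(\omega^{i_k} - \lambda)_{0\le k\le n}$.

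Next I would compute the coordinate-wise product of these two generic points. One obtains
\[
a t^{i_k}\cdot(\omega^{i_k} - \lambda) \;=\; a(t\omega)^{i_k} \;+\; (-a\lambda)\, t^{i_k},
\]
which is precisely $\phi(a',b',s',t')_k$ under the substitution $a' := a$, $b' := -a\lambda$, $s' := t\omega$, $t' := t$. This shows that every point of the image of the Hadamard parametrization lies in the image of the secant map $\phi$. For the reverse inclusion, starting from $\phi(a,b,s,t)_k = a s^{i_k}+ b t^{i_k}$ with all entries in $\CC^*$, set $\omega := s/t$ and $\lambda := -b/a$; then the expression rewrites as $a t^{i_k}(\omega^{i_k} - \lambda)$, which is the Hadamard product of the point $(at^{i_k})_k\in C$ with the point $(\omega^{i_k}-\lambda)_k\in Z$.

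To finish, I would observe that the change of variables $(a,b,s,t)\leftrightarrow (a,\lambda,\omega,t)$ is a monomial automorphism of the four-dimensional parameter torus $\TP^{4}$, so the two parametrized images inside $\TP^{n+1}$ coincide as constructible subsets. Since both $Sec^1(C)$ and $C\centerdot Z$ are defined as the closures of these images (one in the Zariski topology of $\pr^n$ and then intersected with $\TP^{n+1}$, the other by Definition~\ref{def:star}), the closures are also equal.

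There is essentially no obstacle beyond bookkeeping: the only subtle point is to ensure that passing from the projective secant construction to the very affine torus does not lose image points, which is immediate here because scaling by $a\in\CC^*$ in the first coordinate accounts for the projective equivalence and the Zariski closure of a very affine variety agrees with the intersection of the affine closure with the torus (as noted in Section~\ref{sec:tropical-geometry}).
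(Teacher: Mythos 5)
Your proposal is correct and is essentially the paper's own argument: the paper proves this proposition simply by the monomial change of coordinates $b=-\lambda a$, $t=\omega s$ in the secant map (equation~\eqref{eq:2}), which is exactly the reparametrization you carry out (up to the harmless symmetric choice $\omega=s/t$ versus $\omega=t/s$), together with the same closure/density bookkeeping. No substantive difference in route or content.
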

We now explain the relationship between Hadamard products and their
tropicalization:
\begin{proposition}\label{pr:TropOfSecant} \citep[Corollary 11]{Mega09}
  Given $C,Z$ as in Proposition~\ref{pr:ReparamSecant}, then as
  \emph{sets}
  \begin{equation*}
\cT Sec^1(C) =\cT 
C + \cT Z,
\end{equation*}
where the sum on the right-hand side denotes the Minkowski sum in
$\RR^{n+1}$.
\end{proposition}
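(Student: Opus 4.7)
The plan is to reduce the claim to the general fact that tropicalization turns Hadamard products into Minkowski sums, and then apply the previous proposition. By Proposition~\ref{pr:ReparamSecant}, the first secant variety factors as $Sec^1(C) = C \centerdot Z$ inside $\TP^{n+1}$. Therefore it suffices to establish the general identity $\cT(X \centerdot Y) = \cT X + \cT Y$ for very affine subvarieties $X, Y \subset \TP^{n+1}$, which is the cited \citep[Corollary 11]{Mega09}, applied to $X = C$ and $Y = Z$.

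To prove the general identity, I would work over the algebraically closed Puiseux series field $\mathbb{K}$ and invoke the fundamental theorem of tropical geometry: for a very affine variety $V \subset \TP^{n+1}$, the tropicalization $\cT V$ equals the closure in $\RR^{n+1}$ of the coordinate-wise valuation image of $V(\mathbb{K}) \cap (\mathbb{K}^*)^{n+1}$. The key algebraic input is then that $\val : \mathbb{K}^* \to \QQ$ is a group homomorphism, so that $\val(x \centerdot y) = \val(x) + \val(y)$ for coordinate-wise products.

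For the inclusion $\cT C + \cT Z \subseteq \cT Sec^1(C)$: given $w_1 \in \cT C$ and $w_2 \in \cT Z$, I would lift each to a Puiseux point $c \in C(\mathbb{K}^*)^{n+1}$ and $z \in Z(\mathbb{K}^*)^{n+1}$ with $\val(c) = w_1$ and $\val(z) = w_2$. Then $c \centerdot z$ lies in $C \centerdot Z = Sec^1(C)$ and has valuation $w_1 + w_2$, placing $w_1 + w_2$ in $\cT Sec^1(C)$. For the reverse inclusion, note that the set $U = \{c \centerdot z : c \in C, z \in Z\}$ is by definition Zariski-dense in $C \centerdot Z$, so $\cT U$ is dense in $\cT(C \centerdot Z)$; applying valuation to an element of $U$ realized over $\mathbb{K}$ shows $\cT U \subseteq \cT C + \cT Z$, and density together with closedness of the right-hand side finishes the inclusion.

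The main obstacle is the reverse inclusion, where one must justify that taking the Zariski closure implicit in the Hadamard product does not enlarge the tropicalization beyond $\cT C + \cT Z$. This reduces to showing that the Minkowski sum is itself closed in $\RR^{n+1}$, which holds here because $\cT C$ is a classical linear space (the row span of the matrix with rows $\mathbf{1}$ and $(0, i_1, \ldots, i_n)$, since $C$ is a toric curve) and $\cT Z$ is a finite polyhedral fan by Theorem~\ref{thm:MasterGraphIsTropical}; the sum of a linear subspace and a finite union of closed polyhedral cones is a closed polyhedral set.
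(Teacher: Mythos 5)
Your argument is correct, but it follows a different route from the one the paper relies on. The paper does not prove this proposition at all: it is quoted from \citep[Corollary 11]{Mega09}, and the proof behind that citation (which is also how the paper uses the statement later) realizes the Hadamard product as the image of $C\times Z$ under the monomial map $\alpha\colon \TP^{2n+2}\to\TP^{n+1}$ with matrix $(I_{n+1}\mid I_{n+1})$, and then applies the push-forward theorem for tropicalizations under monomial maps (Theorem~\ref{thm:ST}) together with $\cT(C\times Z)=\cT C\times \cT Z$, so that $\cT Sec^1(C)=A(\cT C\times \cT Z)=\cT C+\cT Z$. That route has the advantage that formula~\eqref{eq:ST} simultaneously delivers the tropical multiplicities, which is exactly what the paper needs in Proposition~\ref{pr:sizeOfFiber} and in the proof of Theorem~\ref{thm:MainThm}; your valuation-theoretic argument over the Puiseux series field is more elementary and self-contained but purely set-theoretic, which is all the proposition claims, so this is a legitimate trade-off.

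Two points in your write-up deserve tightening. First, exact lifting of $w_1\in\cT C$, $w_2\in\cT Z$ to Puiseux points with prescribed valuation is only guaranteed for points with rational coordinates; for general real points you should argue by density of rational points in these rational polyhedral sets together with closedness of $\cT Sec^1(C)$, just as you do for the other inclusion. Second, in the reverse inclusion the phrase ``$U$ is Zariski-dense, so $\cT U$ is dense in $\cT(C\centerdot Z)$'' is not automatic for an arbitrary dense subset; you need that $U$, being the image of a morphism, is constructible and hence contains a dense open subset $O$ of $C\centerdot Z$, and that $\cT(C\centerdot Z)\smallsetminus \cT\big((C\centerdot Z)\smallsetminus O\big)$ is dense in $\cT(C\centerdot Z)$ because the latter is pure-dimensional while the removed set has strictly smaller dimension. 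With that said, your observation that $\cT C+\cT Z$ is closed (a linear plane plus the support of a finite fan, by Theorem~\ref{thm:MasterGraphIsTropical}) is exactly the right closing step, and the proof goes through.
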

Since the curve $C$ is parameterized by monomials, its tropicalization
$\cT C$ is the two-dimensional vector space spanned by the lattice vectors
$\{ (1, \ldots, 1), (0, i_1, \ldots, i_n)\}$, with constant weight one. In addition, $\cT Z$ is a
pointed polyhedral fan, and the lineality space of $\cT Sec^1(C)$ is
$\cT C$. Thus, the associated spherical complex $(\cT \Sec^1(C)/\cT
C)\cap \Sn^{n-2}$ is a graph. It can be obtained by identifying nodes
and edges in the master graph by their residue class modulo $\cT
C$. In the remainder of this section we explain this reduction process.

Before diving into the computation of $\cT Sec^1(C)$ for any
projective monomial curve $C$, we show that it suffices to treat the
case of exponent vectors that are primitive and whose coordinates are
all distinct. This simplifies the combinatorics encoded in the
multiplicities as well.  Here is the precise statement:
\begin{lemma}\label{lm:PrimitiveIncreasingExponentVector}
  Via reparameterizations, we can assume that the exponent vector
  parameterizing the curve $C$ is a \emph{primitive}
  lattice vector $(0, i_1, \ldots,
  i_n)$ with $0=i_0<i_1<\ldots<i_n$.
\end{lemma}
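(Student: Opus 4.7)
The plan is to apply a sequence of elementary reparameterizations to an arbitrary exponent vector $(a_0,\ldots,a_n)\in\ZZ^{n+1}$, each of which either preserves the projective curve $C$ exactly (and hence $Sec^1(C)$ and $\cT Sec^1(C)$) or identifies the situation with one in a smaller ambient torus via a monomial map whose induced linear map on tropical tori is explicit. The target is a vector with smallest entry $0$, strictly increasing, and primitive.

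First I would permute the coordinates of $\pr^n$ so that $a_0\le a_1\le\cdots\le a_n$; a permutation automorphism of the ambient torus $\TP^{n+1}$ permutes $\cT Sec^1(C)$ by the same permutation, so nothing of substance changes. Second, I would multiply the parameterization $(1:t^{a_0}:\cdots:t^{a_n})$ by $t^{-a_0}$, which leaves $C$ literally unchanged as a projective curve while normalizing the smallest exponent to $0$. Third, if $d:=\gcd(a_1-a_0,\ldots,a_n-a_0)>1$ and $a_i-a_0=d\,b_i$, then because $t\mapsto t^d$ is surjective on $\CC^*$ the set equality
\[
\{(1:t^{db_1}:\cdots:t^{db_n}):t\in\CC^*\} \;=\; \{(1:s^{b_1}:\cdots:s^{b_n}):s\in\CC^*\}
\]
shows that the curve is unchanged by passing to the primitive exponent vector $(0,b_1,\ldots,b_n)$.

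Finally, to force strict monotonicity, suppose two entries still coincide after these operations, say in positions $j$ and $k$ with $j\neq k$. Then every point of $C$, and hence of $Sec^1(C)$, satisfies $x_j=x_k$, so $Sec^1(C)$ lies in the codimension-one subtorus $\{x_j=x_k\}\subset\TP^{n+1}$. Under the monomial inclusion $\TP^n\hookrightarrow\TP^{n+1}$ identifying the $j$-th and $k$-th coordinates, $Sec^1(C)$ is the image of $Sec^1(C')$ for the curve $C'\subset\pr^{n-1}$ obtained by deleting the $k$-th coordinate, and consequently $\cT Sec^1(C)$ is the image of $\cT Sec^1(C')$ under the induced linear embedding of tropical tori. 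Induction on $n$ then reduces to the case of pairwise distinct exponents. The only step requiring more than bookkeeping is this last one, which rests on the standard functoriality of tropicalization under monomial maps; the hard part will be verifying that nothing is lost at the level of multiplicities, but set-theoretically the reduction is clean.
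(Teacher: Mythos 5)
Your argument is correct and is essentially the paper's own: primitivity via the substitution $s=t^{d}$, and distinctness via an injective monomial map together with functoriality of tropicalization (Theorem~\ref{thm:ST}), the only cosmetic difference being that you collapse repeated exponents one pair at a time by induction while the paper collapses all repetition classes at once through a single map $\alpha\colon\TP^r\to\TP^{n+1}$. The multiplicity issue you flag is not actually delicate: in formula~\eqref{eq:ST} the map has degree $\delta=1$ and the coordinate-duplication map sends saturated sublattices of $\ZZ^r$ to saturated sublattices of $\ZZ^{n+1}$, so every lattice index equals $1$ and $m_v=m_{\alpha(v)}$, which is exactly how the paper concludes.
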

The first claim follows by reparameterizing the curve $C$ as $t
\mapsto (1:t^{\frac{i_1}{g}}: \ldots: t^{\frac{i_n}{g}})$, where
$g=\gcd (i_1, \ldots, i_n)$.  The second assertion is a direct
consequence of the following result that shows the interplay between
maps on tori and their tropicalization.  Let $\alpha\colon \TP^r \to
\TP^N$ be a monomial map whose exponents are encoded in a matrix $A\in
\ZZ^{N\times r}$.
\begin{theorem}\citep[Theorem 3.12]{ElimTheory}
\label{thm:ST}
Let $V \subset \TP^r$ be a subvariety.  Then, $ \cT(\alpha(V)) = A(\cT
V).  $ Moreover, if $\alpha$ induces a generically finite morphism of
degree $\delta$ on $V$, the multiplicity of a regular point $w$ of
$\cT (\alpha(V))$ is
\begin{equation}
m_w = \frac{1}{\delta} \cdot \sum_v m_v \cdot \text{ index
}(\mathbb{L}_w \cap \ZZ^N : A(\mathbb{L}_v \cap \ZZ^r)),\label{eq:ST}
\end{equation}
where the sum is over all points $v \in \cT V$ with $A v = w$.  We
also assume that the number of such $v$ is finite and that all of them are
regular points in $\cT V$. In this setting, $\mathbb{L}_v, \mathbb{L}_w$
denote the linear span of neighborhoods of $v \in \cT V $ and $w \in
A (\cT V)$ respectively.
\end{theorem}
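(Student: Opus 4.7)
The plan is to separate the two assertions and address them in turn. The set-theoretic equality $\cT(\alpha(V)) = A(\cT V)$ should follow directly from the valuation-theoretic characterization of tropicalization (the Fundamental/Bieri--Groves theorem): a point $w$ lies in $\cT(\alpha(V))$ if and only if there is a point $y \in \alpha(V)(K)$, over some algebraically closed valued extension $K$ of $\CC$, with coordinate-wise valuation $w$. Since $\alpha$ is monomial with exponent matrix $A$, any preimage $x \in V(K)$ of $y$ satisfies $\val(y) = A\,\val(x)$, and conversely every $x\in V(K)$ pushes forward to $\alpha(x)\in \alpha(V)(K)$. Combined with surjectivity of $V(K)\to \alpha(V)(K)$ at the level of valued points, this gives the set equality on the nose.

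For the multiplicity formula, I would use the pushforward formalism for weighted balanced fans, derived via initial ideals. Pick $w$ in the relative interior of a maximal cone of $\cT(\alpha(V))$ and let $v_1,\ldots,v_s$ be its finitely many preimages under $A$ inside $\cT V$, each regular with local linear span $\mathbb{L}_{v_k}$. The pullback ring map $\alpha^{*}\colon \CC[x_1^{\pm},\ldots,x_N^{\pm}]\to \CC[t_1^{\pm},\ldots,t_r^{\pm}]$ sends $\chi^{u}$ to $\chi^{A^{T}u}$ and so intertwines initial ideals: after localizing at a point in the dense torus, the scheme $\Sp(\CC[\TP^N]/\init_w(I_{\alpha(V)}))$ pulls back under $\alpha$ to the disjoint union of the schemes $\Sp(\CC[\TP^r]/\init_{v_k}(I_V))$, each occurring with transcription governed by the induced map of subtori. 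Counting multiplicities of minimal associated primes on each side and tracking how the monomial map distorts lattices yields the claimed formula.

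The lattice index $[\mathbb{L}_w\cap\ZZ^N : A(\mathbb{L}_v\cap\ZZ^r)]$ enters precisely because the induced map of subtori with cocharacter lattices $\mathbb{L}_v\cap\ZZ^r$ and $\mathbb{L}_w\cap\ZZ^N$ is \emph{finite} of degree equal to that very index; summing $m_{v_k}$ (the local multiplicity upstairs) weighted by these indices counts, scheme-theoretically, the length contributed to $\init_w(I_{\alpha(V)})$. The factor $1/\delta$ arises because $\alpha$ is $\delta$-to-one on $V$, so this sum overcounts the multiplicity of $\init_w(I_{\alpha(V)})$ by exactly $\delta$.

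The hard part will be the rigorous justification of this multiplicity identity, since a bare initial-ideal argument requires care with localizations and with the scheme-theoretic fiber of a monomial map. The cleanest route is probably to invoke the general pushforward theorem for tropical cycles under monomial maps (e.g.\ in the spirit of \cite{ElimTheory, Mega09}): balanced fans push forward to balanced fans, and the weights are given exactly by the stated lattice-index sum. An alternative, more geometric route is to interpret $m_w$ as a stable intersection number with a transverse affine subspace in $\TP^N$; pulling this slice back to $\TP^r$ produces transverse slices through the $v_k$, and a projection-formula argument relates the two intersection numbers with correction given precisely by the lattice index at each $v_k$ and the global degree $\delta$.
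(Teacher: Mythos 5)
First, note that the paper does not prove this statement at all: it is imported verbatim, with attribution, from \citep[Theorem 3.12]{ElimTheory}, so there is no internal proof to compare against and any blind attempt must stand on its own as a proof of the Sturmfels--Tevelev pushforward theorem. Measured against that standard, your set-theoretic argument is essentially right in spirit but has one inaccuracy: $\alpha(V)$ is the \emph{closure} of the image, so the map $V(K)\to \alpha(V)(K)$ on valued points is not surjective as you claim. The standard fix is a density argument: $\val(\alpha(V(K)))=A(\val(V(K)))$ is dense in $\cT(\alpha(V))$ because $\alpha(V(K))$ is Zariski dense in $\alpha(V)$, and $A(\cT V)$ is a closed polyhedral set (finite union of images of cones), so the two closed sets coincide.

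The multiplicity formula is where the genuine gap lies. Your primary route --- ``invoke the general pushforward theorem for tropical cycles under monomial maps (in the spirit of \cite{ElimTheory, Mega09})'' --- is circular: equation~\eqref{eq:ST} \emph{is} that pushforward theorem, so it cannot be cited in its own proof. Your initial-ideal sketch starts from a correct observation (since $\langle A^{T}u,v\rangle=\langle u,Av\rangle$, initial forms satisfy $\init_v(\alpha^*f)=\alpha^*(\init_w f)$ whenever $Av=w$), but the key assertion --- that $\Sp\big(\CC[\TP^N]/\init_w(I_{\alpha(V)})\big)$ pulls back to the disjoint union of the $\Sp\big(\CC[\TP^r]/\init_{v_k}(I_V)\big)$, with the lattice index $[\mathbb{L}_w\cap\ZZ^N : A(\mathbb{L}_{v_k}\cap\ZZ^r)]$ accounting for the degree of the induced subtorus map and with a global overcount of exactly $\delta$ --- is precisely the content that needs proof, and nothing in the sketch establishes it. In particular you must show that the map of subtori with cocharacter lattices $\mathbb{L}_{v_k}\cap\ZZ^r\to\mathbb{L}_w\cap\ZZ^N$ is finite of that degree, that the initial degenerations $\init_{v_k}(I_V)$ exhaust the fiber structure of $\init_w(I_{\alpha(V)})$ (no contributions from non-regular or unaccounted points), and that the generic $\delta$-to-one behaviour of $\alpha|_V$ persists after degeneration; this is exactly what the actual proof in \cite{ElimTheory} achieves via tropical compactifications and toric intersection theory (multiplicities as intersection numbers with orbit closures, plus the projection formula for the induced toric morphism). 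Your alternative ``stable intersection/projection formula'' route is the one closest to that argument and is the direction I would develop, but as written it remains a heuristic outline rather than a proof.
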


\begin{proof} [\textbf{Proof of
    Lemma~\ref{lm:PrimitiveIncreasingExponentVector}}] Let $\{0, i_1,
  \ldots, i_r\}$ be the distinct values in the exponent vector
  defining the curve $C$ in increasing order. We partition the set of
  indices $\{0, \ldots, n\}$ into $S_0\sqcup \ldots \sqcup S_r$, where
  each $S_j$ consists of all indices with the same value $i_j$.  The 
map
  \[
  \alpha\colon \TP^r \to \TP^{n+1}\quad (t_1, \ldots, t_r) \mapsto
  (\underbrace{1, \ldots, 1}_{|S_0| \, \text{times}}, \underbrace{t_1,
    \ldots, t_1}_{|S_1|\, \text{times}}, \ldots, \underbrace{t_r,
    \ldots, t_r}_{|S_r| \,\text{times}})
  \]
  is one-to-one and the linear map $A$ induced by $\alpha$ is
  injective. Therefore,
  \[
  \cT Sec^1(\alpha(C)) = \cT(\alpha(Sec^1(C)))= A(\cT Sec^1(C))\;,\;
  \cT (\alpha(Z)) = A(\cT Z) \; , \; \cT(\alpha(C)) = A (\cT C), 
  \] and any fan structure in $\cT Sec^1(C)$ and $\cT Z$ translates
  immediately to a fan structure in $\cT Sec^1(\alpha(C))$ and $\cT
  \alpha(Z)$, by the injectivity of $A$. 
  From
  ~\eqref{eq:ST} we see that multiplicities match up, i.e.\
  $m_v = m_{\alpha(v)}$ for any regular points $v$, $\alpha(v)$. This
  concludes our proof.
\end{proof}

Our next goal is to explain the relationship between $\cT Sec^1(C)$
and the \emph{master graph} presented in
Section~\ref{sec:fundamental-graph}. First,
Proposition~\ref{pr:TropOfSecant} expresses the tropical secant
variety set-theoretically as the Minkowski sum of $\cT C$ and $\cT
Z$. Despite what one might think at first glance, a Minkowski sum of
fans does not have any canonical fan structure derived from those of
its summands. Some maximal cones can be subdivided, while others can
be merged into bigger cones.  Nonetheless, we can still use this
characterization to describe $\cT Sec^1(C)$ not just as a set, but as
a collection of four-dimensional weighted cones $\{\cT C + \sigma\}$
where $\sigma$ varies over maximal cones of $\cT Z$ whose sum with
$\cT C$ has dimension four. This presentation allows us to compute the
multiplicity of any regular point $\omega$ in $\cT Sec^1(C)$ as the
sum of weights of cones containing $\omega$, in agreement with the
spirit of Theorem~\ref{thm:GT} and Remark~\ref{rm:SumMultGT}.
 
Each maximal cone $\sigma$ in $\cT Z$ is represented by an edge in the
master graph. Thus, if we reduce the collection of four-dimensional
cones encoding $\cT Sec^1(C)$ by its lineality space $\cT C$ and
intersect it with the $(n-2)$-sphere, we obtain a subgraph of the
master graph. Intersections between cones of the collection $\cT
Sec^1(C)$ come in several flavors. If the intersection between a pair
of cones is four-dimensional, we call it an \emph{overlap}. If these
cones coincide, we say the overlap is \emph{complete}; otherwise, it
is \emph{partial}. If their intersection is three-dimensional, we call
it a \emph{crossing}. If they intersect at a common face of each, we
say that the crossing is \emph{nodal}; otherwise, it is
\emph{internal}. We wish to summarize all complete overlaps and nodal
crossings in our subgraph. This data is recorded in the {tropical
  secant graph} from Definition~\ref{def:tropSecantGraph}. For a
numerical example, see
Figure~\ref{fig:RanestadSecantAndGrobnerSecant}. Meanwhile, partial
overlaps and internal crossings are considered in
Theorems~\ref{thm:GrobnerTropSecGraphForN=6},~\ref{thm:GrobnerTropSecGraphForN=5}
and~\ref{thm:GrobnerTropSecGraphForN=4}, when discussing fan
structures. As we hinted in Theorem~\ref{thm:MasterGraphIsTropical}, a
special role is played by $\underline{b}=\{0, i_1, \ldots, i_{n-1}\}$
and $\underline{e}=\{i_1, \ldots, i_n\}$, the ``beginning'' and
``ending'' subsets, as Remark~\ref{rk:tropSecantGraphIdentifications}
shows.

\begin{definition}\label{def:tropSecantGraph}
  The \emph{tropical secant graph} is a weighted subgraph of the
  master graph in $\RR^{n+1}$, with nodes:
  \begin{enumerate}
  \item $D_{i_j}= {e_j}:=(0, \ldots, 0,
1, 0, \ldots,    0)$ 
    \qquad ($ 0\leq j\leq n$),
\item  $E_{i_j}= \sum_{k<j} i_k\cdot e_k + i_j\cdot (\sum_{k\geq j} e_k)=(0, i_1, \ldots, i_{j-1},
  {i_j}, \ldots, i_j)$ \qquad ($ 1\leq j\leq
  n-1$),
\item $F_{\underline{a}}=\sum_{i_j\in \underline{a}}{ e_j}$\quad where 
  $\underline{a}\subsetneq\{0, i_1, \ldots, i_n\}$ varies among all proper
  subsets containing at least two elements that are obtained from an arithmetic progression.
\end{enumerate}
The edges have positive weights:
\begin{enumerate}
\item $m_{ E_{i_j}, E_{i_{j+1}}} = \gcd(i_1, \ldots, i_j)\,
\gcd\limits_{j<t<n} (i_n-i_t)$ \quad  ($1\leq j \leq n-2$),
\item $m_{ D_{i_j},E_{i_j}}\!\! = \gcd\!\big(\gcd (i_1, \ldots, i_{j-1}) 
\! \! \gcd\limits_{j< l\leq n}\!\! (i_l-i_j)\, ;\!\! \gcd\limits_{0\leq k  <j}
\! \! (i_j-i_k) \,
  \! \gcd(i_{j+1}, \ldots, i_n)  \big)$\,   ($1\!\leq\! j\! \leq\!
  n-1$),
\item $m_{F_{\underline{a}},D_{i_j}} =
  \frac{1}{2}\sum\limits_{r}\varphi(r) \cdot\gcd\big(\gcd\limits_{i_l,
    i_k\notin \underline{a}} (\mid i_l-i_k\mid)\; ;
  \gcd\limits_{i_l, i_k\in\, \underline{a}\smallsetminus \{i_j\}} (\mid
  i_l-i_k\mid)\big)$ \quad where $\underline{a} = \{i_l\mid i_l \equiv i_j \!\!\!\pmod r\}$,
             $r\in \ZZ$ induces the subset $\aaa$ and $2\leq |\aaa| \leq n$.
\end{enumerate}
(By convention, a gcd over an empty set of indices is taken to be 0.)
\end{definition}

\begin{remark}
\label{rk:tropSecantGraphIdentifications}
We explain in words the transition from the master graph to the
tropical secant graph in reducing by $\cT C$. First, the edges
$D_{i_n}E_{i_{n-1}}$, $D_{i_n}h_{i_{n-1}}$ and $D_{i_0}h_{i_1}$ are
deleted. Second, the node $F_{0, i_1, \ldots, i_n}$ and all its
adjacent edges disappear. Third, the nodes
$h_{i_j}$ collapse to the corresponding nodes
$E_{i_j}$ for $1\leq j \leq n-1$. Lastly, if $F_{\underline{e}}$ (resp.\ $F_{\underline{b}}$) is a node in the master graph, we identify it with $E_{i_1}$ (resp.\ $E_{i_{n-1}}$) due to the equalities
  \begin{equation*}
     i_1\cdot F_{\underline{e}} = E_{i_1},\quad  (i_n-i_{n-1}) \cdot
     F_{\underline{b}} = E_{i_{n-1}} + (i_n-i_{n-1}) \cdot {\bf 1} +
     (-1) \cdot (0, i_1, \ldots, i_n).
  \end{equation*}
In this identification, the edges adjacent to the first node are added to those of
the second. We also merge the corresponding edges $E_{i_1}D_{i_1}$ and $F_{\underline{e}}D_{i_1}$ (resp.\ $E_{i_{n-1}}D_{i_{n-1}}$ and $F_{\underline{b}}D_{i_{n-1}}$) in the tropical secant graph, assigning the sum of their weights to the new edge. 

As in the case of the master graph, if we have a subset $\underline{a}
= \{i_j,i_k\}$ coming from an arithmetic progression, then the
corresponding node $F_{\underline{a}}$ is bivalent and can be removed
from the tropical secant graph. We then replace the two edges
$F_{\underline{a}}D_{i_j}$ and $F_{\underline{a}}D_{i_k}$ of equal
weight with a single edge $D_{i_j}D_{i_k}$ of the same weight. After
the above deletion of edges and collapse of nodes, some of the other
nodes may also become bivalent, so we are allowed to remove them as
well. However, we keep them 
to simplify notation.
\end{remark}

\medskip

By Theorem~\ref{thm:MainThm} the tropical secant graph characterizes
the tropicalization of the first secant of any monomial curve. Here is
a precse set-theoretic description of the associated graph:

\begin{corollary}
  The underlying graph of $\cT Sec^1(C)$ is obtained by
  gluing the graphs
  \begin{center}
 \begin{minipage}[c]{.5\linewidth}
 \includegraphics[scale=0.38]{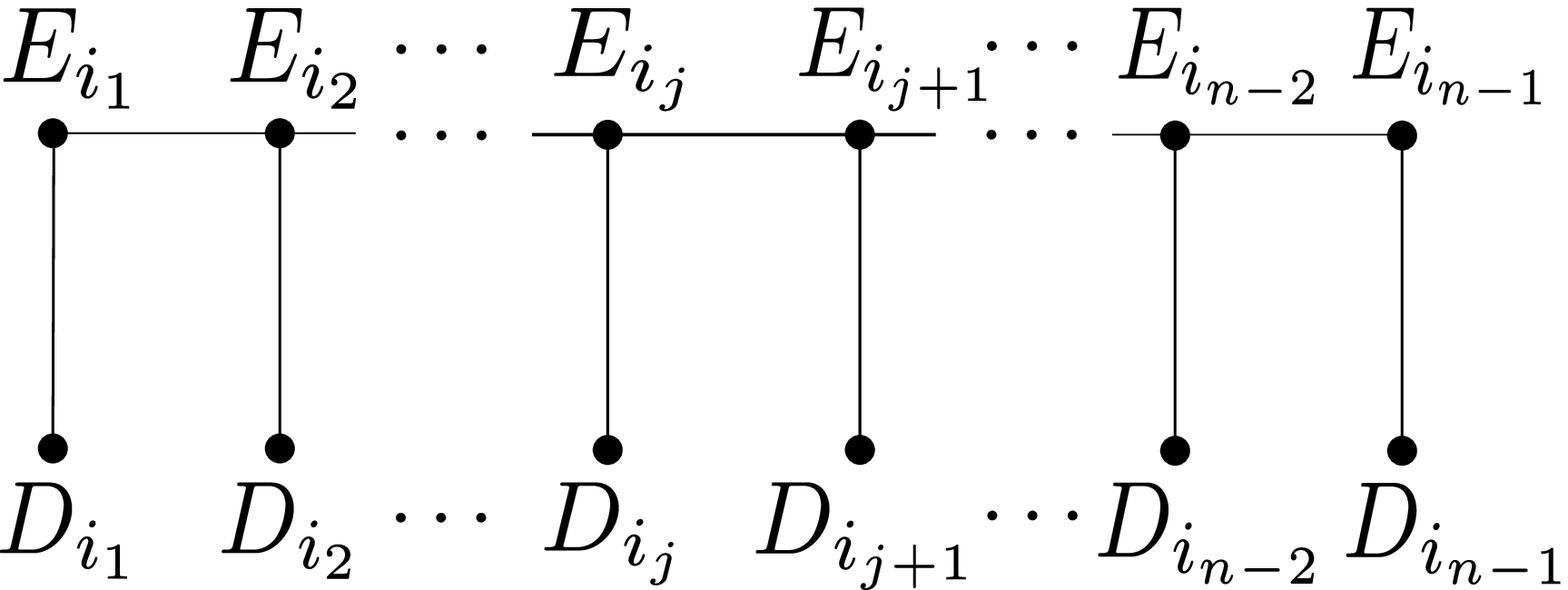} 
 \end{minipage}
 \begin{minipage}[c]{.45\linewidth}
  \centering \includegraphics[scale=0.2]{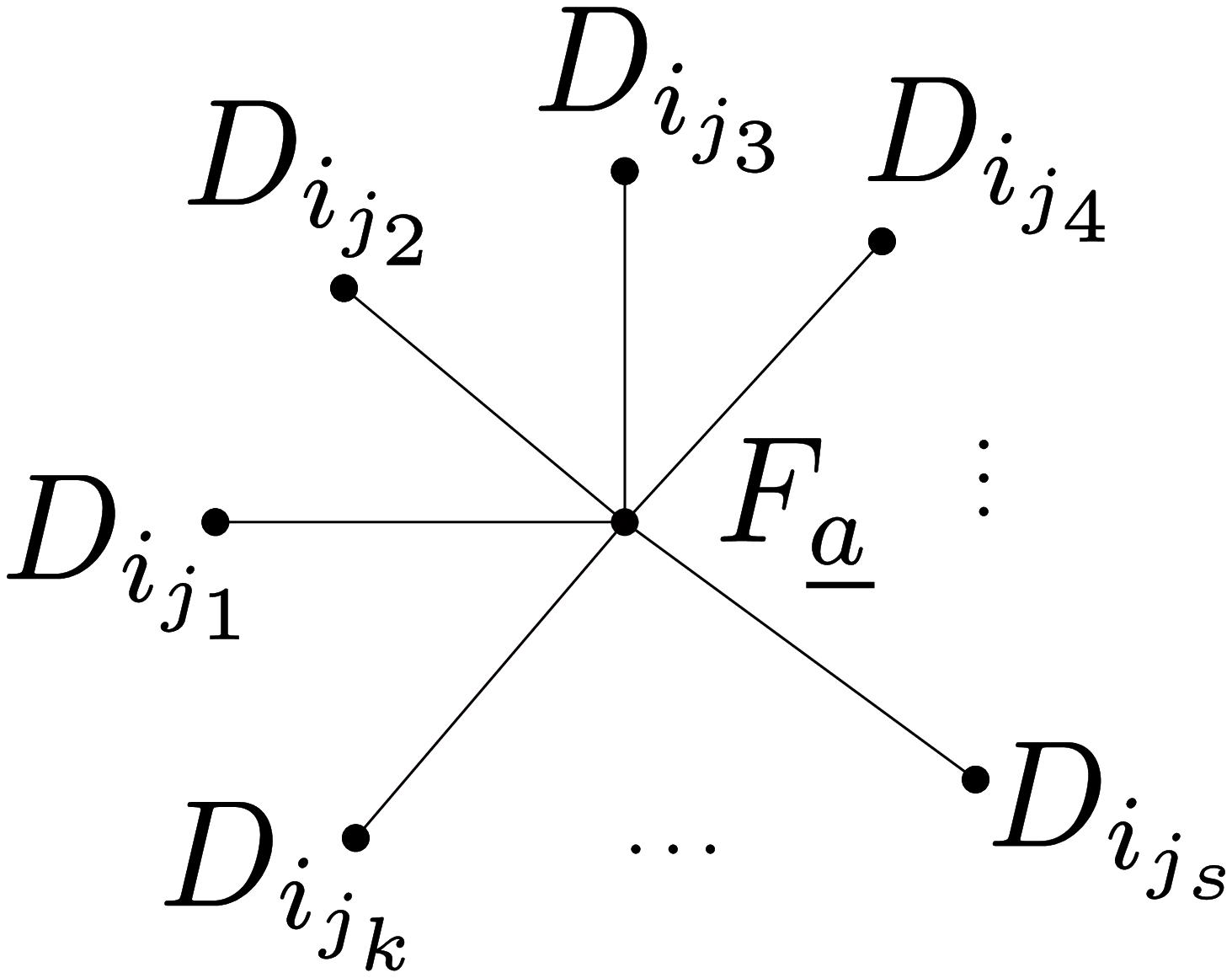}
\[ \small{\underline{a}
\neq \{0,
\ldots, i_n\}}\]
 \end{minipage}
  \end{center}
\noindent
 along all nodes $D_{i_j}$, and gluing together the
  nodes $E_{i_1}\equiv F_{i_1, \ldots, i_n}$, $E_{i_{n-1}}\equiv F_{0,
    \ldots, i_{n-1}}$.
\end{corollary}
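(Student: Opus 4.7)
The plan is to derive this corollary directly from Theorem~\ref{thm:MainThm}, which identifies $\cT Sec^1(C)$ as the cone with lineality space $\cT C = \RR\langle \mathbf{1}, (0, i_1, \ldots, i_n)\rangle$ over the tropical secant graph. The underlying graph of $(\cT Sec^1(C)/\cT C) \cap \Sn^{n-2}$ is therefore, by construction, the tropical secant graph of Definition~\ref{def:tropSecantGraph}, so it suffices to verify that this graph coincides with the gluing described in the statement.

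My approach is to follow the recipe of Remark~\ref{rk:tropSecantGraphIdentifications} and carry out the relevant congruences modulo $\cT C$. Recall that by Theorem~\ref{thm:MasterGraphIsTropical}, $\cT Z$ is the cone over the master graph, and by Proposition~\ref{pr:TropOfSecant} one has $\cT Sec^1(C) = \cT C + \cT Z$ as sets. Consequently the underlying graph is obtained from the master graph --- itself a gluing of the caterpillars $G_{E,D}$, $G_{h,D}$ and the stars $G_{F_{\aaa}, D}$ along the common nodes $D_{i_j}$ --- by reducing modulo the lineality $\cT C$.

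The argument then amounts to a short batch of vector identities. A direct computation gives $E_{i_j} - h_{i_j} = i_j \mathbf{1} + (0, i_1, \ldots, i_n)$, so $h_{i_j} \equiv E_{i_j}$ for $1 \leq j \leq n-1$, which fuses $G_{h,D}$ into $G_{E,D}$. The identity $h_{i_1} + (0, i_1, \ldots, i_n) = -i_1\, D_{i_0}$ degenerates the edge $D_{i_0}h_{i_1}$, while $E_{i_{n-1}} + (i_n - i_{n-1})D_{i_n} = (0, i_1, \ldots, i_n)$ collapses the edges $D_{i_n}E_{i_{n-1}}$ and $D_{i_n}h_{i_{n-1}}$. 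For the star components, $F_{\{0, i_1, \ldots, i_n\}} = \mathbf{1}$ lies in $\cT C$, erasing the whole star indexed by $\{0, i_1, \ldots, i_n\}$; meanwhile $i_1\, F_{\{i_1, \ldots, i_n\}} = E_{i_1}$ and $(i_n - i_{n-1})\, F_{\{0, i_1, \ldots, i_{n-1}\}} \equiv E_{i_{n-1}} \pmod{\cT C}$ produce the two terminal identifications asserted in the statement. Assembling these congruences, what remains in the quotient is precisely $G_{E,D}$ glued at each $D_{i_j}$ to the stars $G_{F_{\aaa},D}$ for proper subsets $\aaa \subsetneq \{0, i_1, \ldots, i_n\}$, subject to the identifications $E_{i_1} \equiv F_{\{i_1, \ldots, i_n\}}$ and $E_{i_{n-1}} \equiv F_{\{0, i_1, \ldots, i_{n-1}\}}$.

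The main --- and essentially only --- obstacle is ruling out accidental coincidences in the quotient beyond those just listed: one must check that no pair of remaining vectors $E_{i_j}$, $D_{i_j}$, $F_{\aaa}$ becomes proportional modulo $\cT C$ and that no two distinct edges of the master graph collapse onto the same edge in $(\cT Z/\cT C)\cap \Sn^{n-2}$. This is a routine linear-algebra verification, using that $(0, i_1, \ldots, i_n)$ is primitive with strictly increasing coordinates, so the distinctive coordinate patterns of the $E$, $D$ and $F$ classes survive the reduction; once this is confirmed, the set-theoretic claim of the corollary follows immediately.
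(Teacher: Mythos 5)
Your proposal is correct and follows essentially the same route as the paper: the paper obtains this corollary from Theorem~\ref{thm:MainThm} together with Remark~\ref{rk:tropSecantGraphIdentifications} and Lemma~\ref{lm:transitionFromMasterToTropicalSecantGraph}, i.e.\ by reducing the master graph modulo $\cT C$ via exactly the congruences you list ($h_{i_j}\equiv E_{i_j}$, $h_{i_1}\equiv -i_1 D_{i_0}$, $E_{i_{n-1}}\equiv -(i_n-i_{n-1})D_{i_n}$, $F_{\{0,i_1,\ldots,i_n\}}=\mathbf{1}$, $i_1 F_{\underline{e}}=E_{i_1}$, $(i_n-i_{n-1})F_{\underline{b}}\equiv E_{i_{n-1}}$). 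Your deferral of the ``no further coincidences'' verification to a routine case check matches the paper's own treatment, which likewise disposes of it by an elementary exhaustive case-by-case analysis.
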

The remainder of this section is devoted to proving
Theorem~\ref{thm:MainThm}, and in particular, to explaining the
mysterious formulas for the weights of the tropical secant
graph. We obtain these numbers using
formula~\eqref{eq:ST}. 
The next propositions and lemmas characterize each one of the
quantities involved in~\eqref{eq:ST}. Our proofs use similar
techniques to the ones of~\cite[Lemma 4.4]{CoxSidman07}.
\begin{proposition}\label{pr:sizeOfFiber}
  Let $\alpha\colon \TP^{2n+2}\to \TP^{n+1}$ be the Hadamard monomial map
  associated to the matrix $(I_{n+1}\mid I_{n+1})\in \ZZ^{(n+1)\times
    2(n+1)}$ and $C, Z$ as in Proposition~\ref{pr:ReparamSecant}. Then, the generic
  fiber of $\alpha_{|_{C\times Z}}$ has size $2$, giving $\delta= 2$ in
  formula~\eqref{eq:ST}.
\end{proposition}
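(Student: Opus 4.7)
The plan is to construct an explicit involution $\sigma$ on the parameter space of $C\times Z$ under which $\alpha|_{C\times Z}$ is invariant, and then show that generically this involution accounts for the entire fiber.

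First, I would compose $\alpha$ with the parameterizations $\psi_C\colon \TP^2\to C$, $(a,t)\mapsto (at^{i_k})_k$, and $\psi_Z\colon \TP^2\to Z$, $(\omega,\lambda)\mapsto(\omega^{i_k}-\lambda)_k$, both of which are generically one-to-one onto their images by the primitivity of $(0,i_1,\ldots,i_n)$ (cf.\ Lemma~\ref{lm:PrimitiveIncreasingExponentVector}). The composed map $\phi:=\alpha\circ(\psi_C\times\psi_Z)$ agrees with the secant map from~\eqref{eq:2}, which I would rewrite as
\[
\phi(a,t,\omega,\lambda) \;=\; \bigl(a(t\omega)^{i_k}-a\lambda\cdot t^{i_k}\bigr)_{k=0}^{n},
\]
expressing each image point as a difference $z_1-z_2$ of two points on the affine cone over $C$, where $z_1=(a(t\omega)^{i_k})_k$ corresponds to curve parameter $t\omega$ with scaling $a$, and $z_2=(a\lambda\, t^{i_k})_k$ to parameter $t$ with scaling $a\lambda$.

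Second, I would introduce the involution $\sigma(a,t,\omega,\lambda):=(-a\lambda,\,t\omega,\,\omega^{-1},\,\lambda^{-1})$ on $\TP^4$, designed to swap the roles of the two summands above. A direct substitution verifies $\sigma^2=\mathrm{id}$ and $\phi\circ\sigma=\phi$, so the generic fiber of $\phi$ has size at least $2$. Since $\dim(C\times Z)=4=\dim Sec^1(C)$ as very affine varieties, $\phi$ is generically finite, so the fiber is indeed finite.

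Third, to show the fiber size is exactly $2$, I would argue that the preimages of a generic $p\in Sec^1(C)$ under $\alpha|_{C\times Z}$ are in bijection with ordered pairs $(z_1,z_2)$ in the affine cone over $C$ satisfying $p=z_1-z_2$. Indeed, given such an ordered pair, the values $a,t,\omega,\lambda$---and hence $(u,v)=(\psi_C(a,t),\psi_Z(\omega,\lambda))$---are uniquely recovered by reading off the zeroth coordinates of $z_1,z_2$ (giving $a$ and $a\lambda$) and the ratios of higher coordinates (giving $t^{i_k}$ and $(t\omega)^{i_k}$, hence $t$ and $\omega$ by primitivity). A generic point $p\in Sec^1(C)$ lies on a unique chord of $C$, and that chord meets $C$ in exactly two distinct points, since $C$ is non-degenerate and non-defective in $\pr^n$ for $n\geq 4$; this can be established along the lines of~\cite[Lemma~4.4]{CoxSidman07}. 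Hence the unordered pair $\{z_1,z_2\}$ is uniquely determined by $p$, and only the two orderings contribute to the fiber. The main obstacle is precisely this last step: although monomial curves are non-generic, one must confirm the classical non-defectivity for their secant varieties. Assembling the three steps gives $\delta = 2$, as claimed.
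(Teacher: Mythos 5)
Your first two steps reproduce the paper's argument in slightly different clothing: the paper also identifies the generic fiber of $\alpha|_{C\times Z}$ with ordered pairs of points of $C$ collinear with $p$ (equivalently, ordered decompositions $p=z_1-z_2$ into points of the affine cone), and the lower bound $\delta\geq 2$ comes from switching the two points, which is exactly your involution $\sigma$. Those parts are fine. The gap is your third step, which is where essentially all of the content of the proposition lives. The assertion that a generic $p\in Sec^1(C)$ lies on a \emph{unique} chord meeting $C$ in exactly two points does not follow from non-degeneracy plus non-defectivity: non-defectivity only says $\dim Sec^1(C)=3$, while what you need is generic \emph{identifiability} (uniqueness of the decomposition), a strictly stronger property that fails for non-defective varieties in general and cannot be obtained by a genericity argument here, since monomial curves are precisely the non-generic case. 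Citing ``along the lines of \cite[Lemma 4.4]{CoxSidman07}'' and flagging the step as ``the main obstacle'' does not close it; as written, the key uniqueness claim is asserted, not proved.

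The paper proves exactly this missing statement as a separate result, Lemma~\ref{lm:No4CoplanarInCGenerically}, and its proof is the nontrivial part of the proposition: one first reduces to $n=4$ with coprime, distinct exponents by projecting onto coordinate $5$-tuples and reparameterizing, and then shows that the locus $W$ of tuples $(s,t,u,v)$ of distinct parameters giving four coplanar, non-collinear points of $C$ has dimension at most $2$, hence cannot dominate the three-dimensional $Sec^1(C)$. This dimension bound is obtained by dividing two $4\times 4$ generalized Vandermonde minors by the Vandermonde determinant to get the Schur polynomials $S_{i_1,i_2,i_3}$ and $S_{i_1,i_2,i_4}$, factoring them via \cite[Theorem 3.1]{SchurFunctions}, and proving the two factorizations share no common factor by degree and irreducibility considerations. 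Until you supply an argument of this kind (or a precise identifiability theorem valid for possibly singular monomial curves, e.g.\ via the fact that curves are not $1$-weakly defective), your step three, and with it the conclusion $\delta=2$, remains unproven.
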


\begin{proof}
  Generically, by equation~\eqref{eq:2}, the elements of the fiber
  of $\alpha$ at
  a point $p$ are in one-to-one correspondence with pairs of points in the
  curve $C$ that are collinear with $p$.  By switching the role of
  these two points in the secant map, we know that the generic fiber
  of $\alpha_{|_{C\times Z}}$ has size at least two. 
  Lemma~\ref{lm:No4CoplanarInCGenerically} implies that 
  it has
  exactly two points.
\end{proof}

\begin{lemma}\label{lm:No4CoplanarInCGenerically}
  For almost all points $p$ in the secant variety of $C$, $p$ lies on a
  single 
  one secant line which, in addition, intersects the curve $C$ at exactly
  two points.
\end{lemma}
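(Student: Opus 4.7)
My plan is to reduce the lemma to a dimension count and then verify it via a Vandermonde-type argument. Let
\[
B_1 = \{p\in \Sec^1(C) : p \text{ lies on more than one secant of } C\},\quad B_2 = \{p\in \Sec^1(C) : p \text{ lies on a secant meeting } C \text{ in } \geq 3 \text{ points}\}.
\]
Since $\Sec^1(C)$ is irreducible of dimension three, it suffices to show $B_1\cup B_2$ has dimension at most two.

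I would first translate each ``bad'' locus into a condition on tuples of points of $C$. If $p\in B_1$ is the intersection of two distinct secants $\overline{P_1 P_2}$ and $\overline{P_3 P_4}$, these lines span a $\mathbb{P}^2$, so the four (generically distinct) points $P_1,P_2,P_3,P_4$ are coplanar in $\mathbb{P}^n$. If $p\in B_2$, it lies on a line meeting $C$ in three collinear points. Setting
\[
V_{\mathrm{cop}} = \{(P_1,\ldots,P_4)\in C^4 : \text{coplanar}\},\qquad V_{\mathrm{col}} = \{(P_1,P_2,P_3)\in C^3 : \text{collinear}\},
\]
the map $V_{\mathrm{cop}}\to B_1$ sending a quadruple to $\overline{P_1 P_2}\cap\overline{P_3 P_4}$ is dominant with generically finite fibers, and $B_2$ is the union of a family of trisecant lines of dimension at most $1+\dim V_{\mathrm{col}}$. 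So it suffices to prove $\dim V_{\mathrm{cop}}\le 2$ and $\dim V_{\mathrm{col}}\le 1$.

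For these bounds, parametrize $k$ points of $C$ by $(t_1,\ldots,t_k)\in\mathbb{A}^k$; coplanarity (resp.\ collinearity) amounts to the matrix $M=(t_\ell^{i_j})_{\ell,j}$ having rank at most $3$ (resp.\ at most $2$). Since $n\geq 4$, the matrix has at least five columns, so for $k=4$ I can select two distinct four-element subsets of columns, for instance $\{0,i_{j_1},i_{j_2},i_{j_3}\}$ and $\{0,i_{j_1},i_{j_2},i_{j_4}\}$; the corresponding $4\times 4$ minors are algebraically independent generalized Vandermonde polynomials in $\mathbb{C}[t_1,\ldots,t_4]$, so their common vanishing has codimension at least two in $\mathbb{A}^4$, giving $\dim V_{\mathrm{cop}}\le 2$. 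An analogous argument with two distinct three-element column subsets of the $3\times(n+1)$ matrix for three points yields $\dim V_{\mathrm{col}}\le 1$. Combining the bounds gives $\dim(B_1\cup B_2)\le 2<3=\dim\Sec^1(C)$, so a generic $p\in\Sec^1(C)$ lies on a unique secant which meets $C$ in exactly two points.

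The main obstacle is the algebraic-independence claim for the two chosen minors. I would verify it by comparing their initial terms under a graded lexicographic order on $\mathbb{C}[t_1,\ldots,t_4]$: the leading monomial of the generalized Vandermonde determinant with column set $\{0,i_{j_1},i_{j_2},i_{j_3}\}$ is of the form $t_2^{i_{j_1}}t_3^{i_{j_2}}t_4^{i_{j_3}}$ up to a sign, so distinct column sets produce distinct leading monomials; equivalently, one checks directly that the Jacobian matrix of the two selected minors has generic rank two. The analogous check for the $3\times 3$ minors of the three-point matrix is identical.
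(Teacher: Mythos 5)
Your overall reduction (bounding the dimensions of the coplanar-quadruple and collinear-triple loci) is the same strategy as the paper's, but the key step fails as stated. Algebraic independence of the two chosen $4\times 4$ generalized Vandermonde minors --- whether certified by distinct leading monomials or by a generically rank-two Jacobian --- does not imply that their common zero locus has codimension two: it only says the map $(f_1,f_2)\colon \CC^4\to\CC^2$ is dominant, which controls the generic fibre, not the fibre over the origin. What is needed is that the two polynomials are coprime in the UFD $\CC[t_1,\ldots,t_4]$, i.e.\ share no irreducible factor (compare $x$ and $xy$: algebraically independent, with distinct leading monomials, yet $V(x,xy)$ is a hypersurface). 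In fact your two minors are visibly not coprime: both are divisible by the ordinary Vandermonde determinant $\prod_{\ell<m}(t_\ell-t_m)$, so their common vanishing contains the big diagonal, a divisor in $\CC^4$ (accordingly your $V_{\mathrm{cop}}$, which contains degenerate quadruples, already has dimension $3$). Worse, even after restricting to distinct points the quotients can still share factors: if some $d>1$ divides every exponent occurring in both of your column sets --- which can happen for an arbitrary choice of two subsets even when the full exponent vector is primitive, e.g.\ exponents $\{2,4,6,8,9\}$ with column sets $\{0,2,4,6\}$ and $\{0,2,4,8\}$ --- then both minors vanish on the codimension-one locus $t_1=\zeta t_2$, $\zeta^d=1$, $\zeta\neq 1$, whose generic point is a quadruple of pairwise distinct parameters. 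So the proposed argument cannot yield $\dim V_{\mathrm{cop}}\le 2$.

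This coprimality problem is precisely what the paper's proof is devoted to. It first reduces to $n=4$ with pairwise distinct coprime exponents (projecting to five coordinates and reparameterizing by $x=s^{d_j}$, $y=t^{d_j}$), which guarantees $\gcd(g,h)=1$ for $g=\gcd(i_1,i_2,i_3)$ and $h=\gcd(i_1,i_2,i_4)$; it then divides both minors by $V(s,t,u,v)$ to obtain the Schur polynomials $S_{i_1,i_2,i_3}$ and $S_{i_1,i_2,i_4}$, and proves these are coprime using the factorization $S=\bigl(V(s^g,t^g,u^g,v^g)/V(s,t,u,v)\bigr)\cdot T$ with $T$ irreducible or constant, a multidegree comparison, and the splitting of $x^g-y^g$ into linear forms. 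To repair your argument you would have to replace the algebraic-independence claim by an actual coprimality argument of this type, choosing column subsets whose gcds are coprime (this is where the reduction to a primitive exponent vector in five coordinates is used), and make the same repair in the collinear-triple bound.
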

\begin{proof}
  We restrict the secant map $\phi$ to the open torus $\TP^3$ mapping
  $(a,s,t) \mapsto (a s^{i_k}+ (1-a)t^{i_k} )_{ 0\leq k \leq n}$. We
  claim that it suffices to prove the lemma for points in the image of
  ${\phi}$, when $n=4$ and the exponents are coprime and distinct

  Assume the statement is true for $n=4$ and coprime
  exponents. Consider all maps $\phi_j$ obtained by composing the map
  $\phi$ with the projections $\pi_j$ onto the five coordinates $0,
  1,2, 3, j$ ($4\leq j \leq n$).  Let $d_j={\gcd(i_1, i_2, i_3, i_j)}$
  and reparameterize $\phi_j$ using the identities $x:=s^{d_j}$ and
  $y:=t^{d_j }$, that is, define $\tilde{\phi_j}\colon \TP^3 \to
  \CC^{n+1}$ as
   \[
 \tilde{\phi_j}(a, x,y):=(1, ax^{\frac{i_1}{d_j}}+(1-a)y^{\frac{i_1}{d_j}},
 ax^{\frac{i_2}{d_j}}+(1-a)y^{\frac{i_2}{d_j}}, ax^{\frac{i_3}{d_j}}+(1-a)y^{\frac{i_3}{d_j}},
 a x^{\frac{i_j}{d_j}}+(1-a) y^{\frac{i_j}{d_j}}).
 \]
 Since the exponents of $\tilde{\phi}_j$ are coprime and the lemma
 holds for $n=4$ by assumption, we know that the fiber of
 $\tilde{\phi}$ over a point
 $\tilde{\phi}_j(a,x,y)$ contains only two points, namely the points
 $(a,x,y)$ and $(1-a, y, x)$. Therefore, any two points $(a,s,t)$ and
 $(a',s',t')$ in the fiber of $\phi$ over the same point satisfies
 $a=a'$, $(s')^{d_j} =s^{d_j}$ and $(t')^{d_j}=t^{d_j}$ for all $4\leq j
 \leq n$, up to symmetry. Since $\gcd(d_4,\ldots, d_n)=1$ we conclude $s=s'$ and
 $t=t'$.

 We now treat the case where $n=4$ and the exponents are coprime and
 pairwise distinct.  To prove our result, it suffices to show that the
 Zariski closure of the set of points in $Sec^1(C)\smallsetminus C$
 which are intersections of two distinct secant lines of $C$ has
 dimension at most two. We parameterize these points by tuples
 $(s,t,u,v)$ of distinct complex numbers, corresponding to four
 coplanar, non-collinear points in $C$.  It suffices to show that the
 set $W$ of such tuples has dimension at most two.

The variety $W$ is
cut out by all $3\times 3$-minors of the $3\times 4$-matrix with rows
$(t^{i_j}-s^{i_j})_{1\leq j\leq 4}$, $(u^{i_j}-s^{i_j})_{1\leq j\leq
  4}$ and $(v^{i_j}-s^{i_j})_{1\leq j\leq 4}$.  Note that for all
minors to vanish, it is
enough to show that two of them do. We pick the ones corresponding
to columns $\{1,2,3\}$ and $\{1,2,4\}$. These minors are precisely the
determinants of the $4\times 4$ generalized Vandermonde matrices
\[
M_{i_1,i_2,i_3}:=\left(
\begin{array}{cccc}
  1 & s^{i_1} & s^{i_2} & s^{i_3}\\
1& t^{i_1} & t^{i_2} & t^{i_3}\\
  1 & u^{i_1} & u^{i_2} & u^{i_3}\\
1& v^{i_1} & v^{i_2} & v^{i_3}
\end{array}
\right) \quad , \quad
M_{i_1,i_2,i_4}:=\left(
\begin{array}{cccc}
  1 & s^{i_1} & s^{i_2} & s^{i_4}\\
1& t^{i_1} & t^{i_2} & t^{i_4}\\
  1 & u^{i_1} & u^{i_2} & u^{i_4}\\
1& v^{i_1} & v^{i_2} & v^{i_4}
\end{array}
\right).
\]
Because $s,t,u$ and $v$ are all distinct, we can divide the
determinant of these two matrices by the product of pairwise
differences among our four variables, that is, by the \emph{Vandermonde
  determinant} $V(s,t,u,v)$. The resulting polynomials are the
\emph{Schur polynomials} $S_{i_1, i_2, i_3}$ and $S_{i_1, i_2, i_4}\in
\ZZ[s,t,u,v]$.

Let $g=\gcd(i_1, i_2, i_3)$ and $h=\gcd(i_1, i_2, i_4)$. Note that $\gcd(g,h)=1$. By \cite[Theorem 3.1]{SchurFunctions} we
can factorize the previous Schur polynomials over $\ZZ[s,t,u,v]$ as
\begin{equation*}
  \begin{aligned}
    S_{i_1, i_2, i_3} & = V(s^g,t^g,u^g,v^g)/V(s,t,u,v) \cdot T_{i_1, i_2,
      i_3}(s,t,u,v),\\
    S_{i_1, i_2, i_4} & = V(s^h,t^h,u^h,v^h)/V(s,t,u,v) \cdot T_{i_1, i_2,
      i_4}(s,t,u,v),
  \end{aligned}
\end{equation*}
where $T_{i_1, i_2, i_3}$ and $T_{i_1, i_2, i_4}$ are either constants
or irreducible over $\CC[s,t,u,v]$. These two polynomials are
homogeneous of total degree $g(i_3/g+i_2/g+i_1/g-3)$ and
$h(i_4/h+i_2/h+i_1/h-3)$, and of degree $i_3-2g$ and $i_4-2h$ in each
variable $s,t,u$ and $v$. By comparing their multidegrees, we conclude that
the polynomials $T_{i_1,i_2,i_3}$ and $T_{i_1,i_2, i_4}$ are coprime.
Next, we claim that the polynomials $V(s^g,t^g,u^g,v^g)/V(s,t,u,v)$
and $V(s^h,t^h,u^h,v^h)/V(s,t,u,v)$ are coprime. This follows from the
well-known identity $(x^g-y^g)=\prod_{\zeta^g=1}(x-\zeta
y)$.

From the previous two observations, we see that if $S_{i_1, i_2, i_3}$
and $S_{i_1, i_2, i_4}$ have a common factor, then either $V(s^h,t^h,
u^h, v^h)/V(s,t,u,v)$ and $T_{i_1, i_2, i_3}$ or the polynomials
$V(s^g,t^g, u^g, v^g)/V(s,t,u,v)$ and $T_{i_1, i_2, i_4}$ would have a
common factor over $\CC[s,t,u,v]$. Without loss of generality, assume
the first pair of polynomials is not coprime. By irreducibility of
$T_{i_1, i_2, i_3}$ and the factorization of $V(s^h,t^h,u^h,v^h)$ into
linear factors involving only two of the variables, this forces
$T_{i_1, i_2, i_3}$ to be linear and to involve only two variables,
contradicting the degree formulas provided above. Hence, we conclude
that $S_{i_1, i_2, i_3}$ and $S_{i_1,i_2,i_4}$ are coprime.  In
particular, this says that the hypersurfaces in $\CC^4$ defined by
$S_{i_1, i_2, i_3}$ and $S_{i_1, i_2, i_4}$ have distinct reduced
irreducible components of dimension three. Therefore, $\dim W\leq 2$
and this ends our proof.
\end{proof}

Next, we compute all cones of the form $\cT C + \RR_{\geq
  0}\otimes\sigma$ of dimension at most three, where $\sigma$ runs
over edges of the master graph $\cT Z$. We discard these cones from
the $\cT \Sec^1(C)$. In addition, we consider all possible pairs
$\sigma, \sigma'$ of such maximal cones in $\cT Z$ to find all
pairwise intersections $(\cT C + \RR_{\geq 0}\otimes \sigma) \cap (\cT
C + \RR_{\geq 0}\otimes\sigma')$ which are complete overlaps or nodal
crossings. By an elementary exhaustive case by case analysis, we
conclude:
\begin{lemma}\label{lm:transitionFromMasterToTropicalSecantGraph}
  After reducing the master graph by the linear space $\cT C$, the
  only non-maximal cones, complete overlaps and nodal crossings are as
  follows:
 \begin{enumerate}
 \item The cones $ \cT C +\RR_{\geq 0}\langle D_0, h_{i_1}\rangle$, 
   $\cT C +\RR_{\geq 0}\langle D_{i_n}, E_{i_{n-1}}\rangle $, $\cT C +\RR_{\geq 0}\langle D_{i_n},
   h_{i_{n-1}}\rangle $ and $
   \cT C +\RR_{\geq 0}\langle F_{0,
     i_1, \ldots, i_n}, D_{i_j}\rangle $ ($0\leq j \leq n$) are not maximal, so we disregard them.
\item The node $F_{\{0, i_1, \ldots, i_n\}}={\bf 1} \in \cT C$, so we
  eliminate it from the graph, together with all its $n+1$ adjacent edges.
  \item For all $1\leq j \leq n-2$, we have equalities $\cT C + \RR_{\geq 0}\langle
    E_{i_j}, D_{i_j}\rangle = \cT C +\RR_{\geq 0}\langle h_{i_j}, D_{i_j}
    \rangle $ and $\cT C +\RR_{\geq 0}\langle E_{i_j}, E_{i_{j+1}}\rangle
    = \cT C +\RR_{\geq 0}\langle h_{i_j}, h_{i_{j+1}} \rangle $ because
    $E_{i_j} \equiv h_{i_j}$ modulo $\cT C$. Hence, we disregard all
    nodes $h_{i_j}$ and their adjacent edges.
  \item $i_1\cdot F_{\underline{e}} = E_{i_1}$ and $(i_n-i_{n-1})
    \cdot F_{\underline{b}} \equiv E_{i_{n-1}}$ modulo $\cT C$, where
    $\underline{e}=\{i_1, \ldots, i_n\}$ and $\underline{b}=\{0, i_1,
    \ldots, i_{n-1}\}$. Thus, the maximal cones $\cT C +\RR_{\geq
      0}\langle F_{\underline{e}}, D_{i_1}\rangle $ and
    $\cT C +\RR_{\geq 0}\langle E_{i_1}, D_{i_1}\rangle $ coincide, as
    well as $\cT C +\RR_{\geq 0}\langle F_{\underline{b}}, D_{i_{n-1}}\rangle
    $ and $\cT C +\RR_{\geq 0}\langle E_{i_{n-1}}, D_{i_{n-1}}\rangle
    $.
 \end{enumerate}
\end{lemma}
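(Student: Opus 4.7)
The proof is by direct coordinate computation using Definition~\ref{def:masterGraph} together with the description $\cT C = \RR\langle \mathbf{1}, (0, i_1, \ldots, i_n)\rangle$. I would split the claims into three parts: the congruences underlying items (ii)--(iv), the remaining edge collapses in item (i), and the exhaustive check that establishes the final ``Moreover'' statement.

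For items (ii)--(iv), an entrywise computation from the definitions yields the key identities
\begin{gather*}
F_{\{0, i_1, \ldots, i_n\}} = \sum_{j=0}^n e_j = \mathbf{1} \in \cT C,
\qquad
i_1\,F_{\underline{e}} = E_{i_1},\\
E_{i_j} - h_{i_j} = i_j\,\mathbf{1} + (0, i_1, \ldots, i_n) \in \cT C \quad (1 \leq j \leq n-1),\\
(i_n - i_{n-1})\,F_{\underline{b}} - E_{i_{n-1}} = (i_n - i_{n-1})\,\mathbf{1} - (0, i_1, \ldots, i_n) \in \cT C.
\end{gather*}
These immediately give the stated congruences and, together with the collapse of $F_{\{0, i_1, \ldots, i_n\}}$, dispose of the $F_{\{0,i_1,\ldots,i_n\}} D_{i_j}$ edges in item (i).

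The remaining edges in item (i) follow from
\[
h_{i_1} + i_1\,D_0 = -(0, i_1, \ldots, i_n) \in \cT C,
\qquad
E_{i_{n-1}} + (i_n - i_{n-1})\,D_{i_n} = (0, i_1, \ldots, i_n) \in \cT C,
\]
which show that the endpoints of $D_0 h_{i_1}$ and $D_{i_n} E_{i_{n-1}}$ become anti-proportional modulo $\cT C$, so the corresponding cones $\cT C + \RR_{\geq 0}\langle \cdot, \cdot\rangle$ are three-dimensional. Combining the second identity with the congruence $h_{i_{n-1}} \equiv E_{i_{n-1}}$ from item (iii) handles the edge $D_{i_n} h_{i_{n-1}}$.

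The main obstacle is the ``Moreover'' clause: ruling out any \emph{other} non-maximal cone, complete overlap, or nodal crossing. Since all node coordinates are fully explicit in terms of $0 = i_0 < i_1 < \cdots < i_n$, this reduces to a finite linear algebra case analysis. I would group the edges by the types $(D_{i_j}, E_{i_j}, h_{i_j}, F_{\aaa})$ of their endpoints and, for each ordered pair of such edges, test whether the four endpoints together with $\mathbf{1}$ and $(0, i_1, \ldots, i_n)$ satisfy an additional linear relation. The strict monotonicity of the $i_j$ rules out spurious coincidences, and inspecting each combination of endpoint types confirms that the only relations are precisely those listed in items (i)--(iv). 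This combinatorial bookkeeping, while elementary, constitutes the bulk of the work.
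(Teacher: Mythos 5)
Your coordinate identities are all correct and are precisely the congruences the paper itself records (cf.\ Remark~\ref{rk:tropSecantGraphIdentifications}), and your handling of items (i)--(iv) is sound: since $\cT C$ is a linear space, each congruence modulo $\cT C$ with a positive scalar immediately yields equality of the corresponding cones, and the anti-proportionality of $D_0, h_{i_1}$ (resp.\ $D_{i_n}, E_{i_{n-1}}$, $D_{i_n}, h_{i_{n-1}}$) modulo $\cT C$ gives the dimension drops in (i). The paper disposes of the exhaustiveness by ``an elementary exhaustive case by case analysis,'' so your plan of a finite check over pairs of edges is in the same spirit as the intended argument.

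The gap is in how that check is carried out and concluded. Testing each pair of edges for ``an additional linear relation'' among the four endpoints together with $\mathbf{1}$ and $(0,i_1,\ldots,i_n)$ detects \emph{every} degeneration, not only complete overlaps and nodal crossings; and your claimed outcome --- that the only such relations are those underlying (i)--(iv), with strict monotonicity of the $i_j$ ruling out spurious coincidences --- is false for $n=4$ and $n=5$. For $n=5$, whenever $i_4+i_1=i_2+i_3$ one has relations such as $(i_r-i_l)F_{\aaa}+(i_j-i_r)D_{i_j}=(i_r-i_u)F_{\ap}+(i_t-i_k)D_{i_k}-i_l\mathbf{1}+(0,i_1,\ldots,i_5)$ (Theorem~\ref{thm:GrobnerTropSecGraphForN=5}), and for $n=4$ Lemma~\ref{lm:n4Overlaps} exhibits partial overlaps under conditions like $(i_4-i_2)i_1=(i_4-i_3)i_2$; the paper's running example $\{30,45,55,78\}$ has six internal crossings. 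The lemma nevertheless holds because those extra coincidences are partial overlaps or internal crossings, which it explicitly does not classify; but to see this your case analysis cannot stop at a rank computation. For each relation found you must examine the signs and supports of the coefficients to decide whether the two cones actually coincide modulo $\cT C$ (complete overlap), meet along a common face (nodal crossing), or merely overlap partially or cross internally --- only the first two types are asserted to be exhausted by (i)--(iv). As written, your final paragraph either asserts a false statement or, if the computation were executed, would produce relations your argument does not account for; the missing step is the classification of each detected relation by the geometry of the intersection, not merely its existence.
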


\begin{proof}[\textbf{Proof of Theorem~\ref{thm:MainThm}.}]
  Proposition~\ref{pr:TropOfSecant} and
  Lemma~\ref{lm:transitionFromMasterToTropicalSecantGraph} prove
  that the cone from $\cT C$ over the tropical secant graph coincides
  with the tropical variety $\cT Sec^1(C)$ as a
  collection of four-dimensional weighted cones. In particular, this shows
  that the tropical secant graph combines all nodes and edges coming
  from nodal crossings and complete overlaps. By
  formula~\eqref{eq:ST}, the multiplicity at a regular point $\omega$
  of $\cT Sec^1(C)$ is the sum of all weights of four-dimensional
  cones $\cT C + \sigma$ containing $\omega$, where $\sigma$ is a
  maximal two-dimensional cone of $\cT Z$. Furthermore, if $m_\sigma$
  is the weight of $\sigma$ in $\cT Z$, the formula weights  $\cT C
  + \sigma$ by 
  \begin{equation*}
\frac{1}{2} \cdot m_\sigma \cdot { index
  }((\mathbb{L}_{\sigma}+\cT C) \cap \ZZ^{n+1} , (\mathbb{L}_\sigma \cap
  \ZZ^n)+(\cT C \cap \ZZ^{n+1})).\label{eq:8bis}
  \end{equation*}

We now prove that this number yields the weight of the corresponding edge in
the tropical secant graph, after combining weights in complete
overlaps following
Remark~\ref{rk:tropSecantGraphIdentifications}. Suppose the cone
$\sigma$ is generated by integer vectors $\mathbf{x}, \mathbf{y} \in \ZZ^{n+1}$. Let
${\bf l_1} =\bf{1}$ and ${\bf l_2} = (0, i_1, \ldots, i_n)$ be the
generators of the primitive lattice $\Lambda$ in $\cT C\cap
\ZZ^{n+1}$. The lattice index in the above formula is the gcd of all
$4\times 4$-minors of the matrix $(\mathbf{x}\mid \mathbf{y} \mid {\bf l_1}\mid {\bf l_2})$
divided by the gcd of all $2\times 2$-minors of the matrix $(\mathbf{x}\mid \mathbf{y})$. These
gcd's are computed as the product of the nonzero diagonal elements of
the Smith normal form of each matrix.

As an example, we show how to obtain the multiplicity $m_{D_{i_j},
  E_{i_j}}$ ($2\leq j \leq n-2$). The remaining multiplicities can be
computed analogously. The edge $D_{i_j}E_{i_j}$ is associated to
precisely two edges in the master graph giving two four-dimensional
cones in $\cT Sec^1(C)$ that overlap completely. These two edges are
$\sigma=D_{i_j}E_{i_j}$ and
$\sigma'=D_{i_j}h_{i_j}$. 
  From Definition 2.1, the multiplicity $m_{\sigma}$ equals $\gcd(i_1,
  \ldots, i_j)$, which is also the gcd of the $2\times 2$-minors of the matrix
  $(D_{i_j}|E_{i_j})$. Likewise, $m_{\sigma'} = \gcd(i_j, \ldots,
  i_n)$ is the gcd of the $2\times 2$-minors of $(D_{i_j}|h_{i_j})$. These
  numbers are precisely the denominators in the formulas for computing
  the indices associated to $\sigma$ and $\sigma'$ in~\eqref{eq:8}. 
Since $E_{i_j} \equiv h_{i_j}$ mod $\Lambda$, we conclude
\begin{align*}
  m_{D_{i_j},E_{i_j}} &= \frac{1}{2}\big(\gcd(4\times 4\text{-minors of }(D_{i_j}|E_{i_j}|{\bf
    l_1}|{\bf l_2})) + \gcd(4\times 4\text{-minors of
  }(D_{i_j}|h_{i_j}|{\bf l_1}|{\bf l_2}))\big) \\ & =
  \gcd(4\times 4\text{-minors of }(D_{i_j}|E_{i_j}|{\bf
     l_1}|{\bf l_2})). 
\end{align*}

We now compute the gcd of all $4\times 4$-minors of the matrix
$(D_{i_j}|E_{i_j}|{\bf l_1}|{\bf l_2})$. For simplicity, we work with
the transpose of this matrix. By elementary operations between rows
that do not change the minors, we alter the second, third and fourth
rows, and expand the minors along the first row, reducing our problem
to computing the $3\times 3$-minors of the matrix
\[
\left(\begin{array}{cccc|ccc}
0 & i_1 &  \ldots& i_{j-1} & i_j & \ldots & i_j\\
1 &   1 &\ldots& 1 & 1& \ldots &1\\
0&  0 &\ldots& 0   &i_{j+1}-i_j &\ldots& i_n-i_j
\end{array}\right)\in \ZZ^{3\times n},
\]
where $i_0=0$.
All non-vanishing $3\times 3$-minors must involve columns from the two
constituent blocks.  The gcd of the minors involving two columns of
the left-hand side is $\gcd (i_1, \ldots, i_{j-1}) \gcd_{j< l\leq   n} (i_l-i_j)$, whereas the gcd of the minors involving two columns
of the rightmost block equals the product $\gcd(i_1,\ldots, i_j)
\gcd_{j<l<n}(i_n-i_l)$. This justifies the formula for
$m_{D_{i_j},E_{i_j}}$ in
Definition~\ref{def:tropSecantGraph}.
\end{proof}


We now study partial overlaps and internal crossings among cones from
$\cT C$ over edges of the tropical secant graph. An
exhaustive case by case analysis shows that if $n\geq 5$, there are no
partial overlaps, and that if $n \geq 6$, there are no internal crossings as
well. For the case
$n=4$, Lemma~\ref{lm:n4Overlaps} and Theorem~\ref{thm:GrobnerTropSecGraphForN=4} indicate that both partial
overlaps and internal crossings are possible. 

Partial overlaps and internal crossings prevent us from inferring a
fan structure for $\cT Sec^1(C)$ from the tropical secant
graph. However, we may introduce new nodes at crossings,
subdivide edges while preserving their weights or merge overlapping
edges and their weights to create a new graph. If this surgery is performed appropriately,
the new graph  encodes the fan structure of our tropical variety as a subfan of the
\emph{Gr\"obner fan} of the homogeneous ideal defining the secant variety. This
motivates the following definition:

  \begin{definition}
    A \emph{Gr\"obner tropical secant graph} for a projective monomial
    curve $C$ parameterized by $n$ coprime distinct integers is a
    weighted graph in $\RR^{n+1}$ whose cone from $\cT C$ gives the
    weighted Gr\"obner fan structure on $\cT Sec^1(C)$.
  \end{definition}

  Unsurprisingly, the complexity of the surgery required to transform
  the tropical secant graph into a Gr\"obner tropical secant graph
  depends on the value of $n$. We present the results for $n>4$ and
  postpone the discussion of the case $n=4$ until the next section.
\begin{theorem}\label{thm:GrobnerTropSecGraphForN=6}
  The tropical secant graph of a monomial curve in $\pr^n$ is a Gr\"obner tropical secant graph for $n\geq 6$.
\end{theorem}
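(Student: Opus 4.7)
The plan is to verify that for $n\geq 6$, the four-dimensional weighted cones $\{\cT C + \RR_{\geq 0}\langle v, v'\rangle\}$ indexed by edges $vv'$ of the tropical secant graph assemble into a genuine polyhedral fan whose support coincides with $\cT\Sec^1(C)$, and that the resulting fan refines the Gr\"obner fan of the ideal of $\Sec^1(C)$. By Theorem~\ref{thm:MainThm}, these cones already cover $\cT\Sec^1(C)$ set-theoretically with the correct multiplicities after the identifications listed in Remark~\ref{rk:tropSecantGraphIdentifications}. Thus it suffices to show two things: (a) no two distinct cones partially overlap in dimension four, and (b) no two distinct cones intersect in a three-dimensional set that is not a common face of both (no internal crossings).

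First I would stratify the edges of the tropical secant graph into the three families suggested by the master graph construction: the \emph{backbone} edges $E_{i_j}E_{i_{j+1}}$ (for $1\leq j\leq n-2$), the \emph{leg} edges $D_{i_j}E_{i_j}$ (for $1\leq j\leq n-1$), and the \emph{star} edges $F_{\aaa}D_{i_j}$ for each arithmetic-progression subset $\aaa$. Each family has an explicit representative written in Definition~\ref{def:tropSecantGraph}, and for every ordered pair of edges I would compute the $2$-plane spanned by the two corresponding nodes modulo the lineality $\cT C = \RR\langle \mathbf{1},(0,i_1,\ldots,i_n)\rangle$. A partial overlap of cones $\cT C+\RR_{\geq 0}\langle v_1,v_2\rangle$ and $\cT C+\RR_{\geq 0}\langle v_3,v_4\rangle$ forces the two planes to coincide modulo $\cT C$; this is a linear algebra condition that I would check by comparing $4\times 4$ minors of $(v_1|v_2|\mathbf{1}|(0,i_1,\ldots,i_n))$ with those of $(v_3|v_4|\mathbf{1}|(0,i_1,\ldots,i_n))$.

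The case analysis proceeds as follows. For two backbone edges, the spans modulo $\cT C$ are determined by consecutive triples $(i_{j-1},i_j,i_{j+1})$, and the only coincidences modulo $\cT C$ were already absorbed in Lemma~\ref{lm:transitionFromMasterToTropicalSecantGraph}. For a backbone and a leg edge at a common vertex $E_{i_j}$, the shared ray $[E_{i_j}]$ is a face of both, so the intersection is exactly $\cT C+\RR_{\geq 0}[E_{i_j}]$: this is nodal, not internal. For a leg and a star edge at a common $D_{i_j}$, one must check that the only shared ray modulo $\cT C$ is $[D_{i_j}]$; this boils down to verifying that $F_{\aaa}$ is not congruent to a positive combination of $E_{i_j}$ and $D_{i_j}$ modulo $\cT C$ unless $\aaa \in \{\underline{b},\underline{e}\}$ (which was already identified in Remark~\ref{rk:tropSecantGraphIdentifications}). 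For two star edges $F_{\aaa}D_{i_j}$ and $F_{\aaa'}D_{i_{j'}}$, the key computation compares the vectors $F_{\aaa}-F_{\aaa'}$ against $\cT C$ and shows that a partial overlap forces $\aaa=\aaa'$, while a three-dimensional intersection away from common faces would force a linear relation among $F_{\aaa},F_{\aaa'},D_{i_j},D_{i_{j'}},\mathbf{1},(0,i_1,\ldots,i_n)$ that cannot hold once $n\geq 6$, simply because the exponent vector then has enough distinct coordinates to separate the relevant subsets.

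The bookkeeping of the star--star and star--backbone cases is the main obstacle: the number of qualifying subsets $\aaa$ grows with $n$, and one must rule out all spurious coincidences in a uniform way. My approach would be to reduce every such coincidence to a non-trivial $\ZZ$-linear relation among vectors $\sum_{i_j\in\aaa}e_j-\sum_{i_j\in\aaa'}e_j$, $e_j-e_{j'}$, $\mathbf{1}$ and $(0,i_1,\ldots,i_n)$, and then argue that for $n\geq 6$ the five integers $i_1,\ldots,i_n$ and their differences are sufficiently generic (even though the $i_j$ themselves are arbitrary) to forbid such relations, whereas for $n=4$ the constraints are loose enough that partial overlaps and internal crossings do occur (as confirmed by Lemma~\ref{lm:n4Overlaps} and Theorem~\ref{thm:GrobnerTropSecGraphForN=4}). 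Once (a) and (b) are established, the cones meet only along common faces, so they form a pure three-dimensional polyhedral fan (after quotienting by $\cT C$); since it has the same support and multiplicities as the Gr\"obner fan structure on $\cT\Sec^1(C)$ and the Gr\"obner fan is the coarsest fan structure with these multiplicities, the two agree, proving the theorem.
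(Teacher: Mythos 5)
Your overall strategy is the same as the paper's: the published proof consists precisely of checking all pairwise intersections of the cones $\cT C+\RR_{\geq 0}\langle v,v'\rangle$ over edges of the tropical secant graph and observing that for $n\geq 6$ no partial overlaps and no internal crossings occur. However, the justification you give at the decisive step is not valid. You propose to rule out the offending linear relations by arguing that for $n\geq 6$ the exponents ``and their differences are sufficiently generic,'' but the exponents are completely arbitrary coprime integers: any sequence containing $1,2,3,4$ satisfies $i_1+i_4=i_2+i_3$, and arithmetic-progression subsets $\aaa$ are exactly the non-generic configurations the construction is built on. The internal crossings for $n=4,5$ (Lemma~\ref{lm:n4Overlaps}, Theorems~\ref{thm:GrobnerTropSecGraphForN=5} and~\ref{thm:GrobnerTropSecGraphForN=4}) are triggered by additive conditions such as $i_r+i_t=i_l+i_u$, and those same conditions can perfectly well hold among sub-indices of a sequence with $n\geq 6$; so ``genericity'' cannot be what separates $n\geq 6$ from $n\leq 5$.

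What actually kills the crossings for $n\geq 6$ is a support/coordinate count, which is what the paper's case analysis implicitly verifies. A partial overlap or internal crossing between cones over $F_{\aaa}D_{i_j}$ and $F_{\ap}D_{i_k}$ amounts to an identity $\alpha F_{\aaa}+\beta D_{i_j}=\gamma F_{\ap}+\delta D_{i_k}+\mu\,\mathbf{1}+\nu\,(0,i_1,\ldots,i_n)$ with $\alpha,\beta,\gamma,\delta>0$. Each index outside $\aaa\cup\ap\cup\{i_j,i_k\}$ gives an equation $\mu+\nu i_m=0$, so two such indices force $\mu=\nu=0$ and then positivity of the coefficients reduces the intersection to a common face; each pair of distinct indices lying in the same stratum of the inside (e.g.\ both in $\aaa\smallsetminus(\ap\cup\{i_j\})$) likewise forces $\nu=0$ and leads to a contradiction. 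Running through the strata bounds the number of indices that can participate in a genuine crossing by about six, i.e.\ $n+1\leq 6$, which is exactly why such configurations exist for $n=4,5$ but not for $n\geq 6$ (and similarly for the crossings involving the $E_{i_j}$ nodes). Your proposal gestures at this when you say the exponent vector ``has enough distinct coordinates to separate the relevant subsets,'' but as written the argument rests on a genericity assumption that fails, so the key step needs to be replaced by this counting argument (or by the exhaustive case analysis the paper invokes). A smaller point: your closing identification of the resulting fan with the Gr\"obner fan structure via ``the coarsest fan structure'' is also asserted rather than argued; what is needed is that the cones over the edges, having no overlaps or crossings, are unions of Gr\"obner cones matching Definition~5.5 of the Gr\"obner tropical secant graph.
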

\begin{proof}
  The proof of this result is elementary and it boils down to
  analyzing intersections between cones from $\cT C$ over pairs of edges in the tropical secant
  graph. If $n\geq 6$, we do not get any partial overlaps or internal crossings.
\end{proof}

\begin{theorem}\label{thm:GrobnerTropSecGraphForN=5}
  For a monomial curve in $\pr^5$, a Gr\"obner tropical secant graph
  may be constructed from the tropical secant graph by adding finitely
  many nodes and subdividing edges accordingly. More precisely, we
  must add nodes $P_{\aaa,j, \ap,k} \in (\cT C + \langle
  F_{\aaa},D_{i_j}\rangle)\bigcap (\cT C + \langle
  F_{\ap},D_{i_k}\rangle) $ where $F_{\aaa}, F_{\ap}$ are nodes in the
  master graph, and the set $\{\aaa,j, \ap,k\}$ together with the index set
  $\{i_1, \ldots, i_5\}$ satisfy one of the following three
  conditions:
  \begin{enumerate}
  \item 
$j=5, k=0$, $i_4+i_1=i_2+i_3$ and the
subsets $\aaa, \ap$ are either  $\aaa=\{i_3,i_4,i_5\},  \ap=\{0,i_1,i_3\}$
or  $\aaa=\{i_2,i_4,i_5\},  \ap=\{0,i_1,i_2\}$;
\item $\aaa=\{i_j,i_r,i_k,i_u\},
  \ap=\{i_u,i_k,i_t\}$, $i_l \notin \aaa\cup \ap$,  $i_r+i_t=i_l+i_u$
  and either
$j>r>l>t, r>u>t$ and $u>k$, or $j<r<l<t, r<u<t$ and
$u<k$;
\item $\aaa=\{i_j,i_u,i_k,i_r\}, \ap=\{i_j,i_u,i_k,i_t\}$, $i_l \notin
  \aaa\cup \ap$ and
  $i_r+i_t=i_l+i_u$,  while $j>u>k, r>l>t$ and $r>u>t$.
  \end{enumerate}

  For each new node $P_{\aaa,j, \ap,k}$, we subdivide the edges
  $F_{\aaa}D_{i_j}$ and $F_{\ap}D_{i_k}$ of the tropical secant graph
  to get edges $F_{\aaa}P_{\aaa,j, \ap,k}$, $P_{\aaa,j,
    \ap,k}D_{i_j}$, and $F_{\ap}P_{\aaa,j, \ap,k}$,
  $P_{\aaa,j, \ap,k}D_{i_k}$, preserving the
  original weights.
\end{theorem}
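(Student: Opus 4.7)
The plan is to reduce the statement to an exhaustive linear-algebraic enumeration. Since the cone of $\cT\Sec^1(C)$ over the tropical secant graph already has the correct underlying set (Theorem~\ref{thm:MainThm}) and records nodal crossings and complete overlaps (Lemma~\ref{lm:transitionFromMasterToTropicalSecantGraph}), the obstructions to being a genuine fan are exactly the \emph{partial overlaps} (pairs of maximal four-dimensional cones sharing a four-dimensional subcone without being equal) and the \emph{internal crossings} (pairs sharing a three-dimensional cone which is not a face of either). The goal is to show that, when $n=5$, every such pathology has the form described in conditions (i)--(iii), and that inserting a vertex $P_{\aaa,j,\ap,k}$ on the common ray (modulo $\cT C$) and subdividing resolves it.

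My first step is to classify the maximal cones to be compared. By Definition~\ref{def:tropSecantGraph} the edges of the tropical secant graph are of three types: $E_{i_j}E_{i_{j+1}}$, $D_{i_j}E_{i_j}$, and $F_{\aaa}D_{i_j}$. The explicit coordinates show that the cones over the first two types are nested in a caterpillar pattern along the span of $\mathbf{1}$ and $(0,i_1,\ldots,i_5)$, so they meet only along the expected faces and never produce partial overlaps or internal crossings. The remaining work concerns pairs of cones $\cT C + \RR_{\geq 0}\langle F_{\aaa}, D_{i_j}\rangle$ and $\cT C + \RR_{\geq 0}\langle F_{\ap}, D_{i_k}\rangle$, and the occasional pair where one of the edges is of type $D_{i_j}E_{i_j}$ intersecting an $F$-type cone; these last ones are handled identically.

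The second step translates intersection into arithmetic. A nontrivial overlap or internal crossing requires a relation $\alpha F_{\aaa} + \beta e_j + \gamma F_{\ap} + \delta e_k \in \cT C = \RR\langle \mathbf{1}, (0,i_1,\ldots,i_5)\rangle$ with not all coefficients in a common face of the two cones. Projecting modulo $\cT C$, the support structure of $F_{\aaa}$, $F_{\ap}$ and the two standard basis vectors forces each index $i_l\notin \aaa\cup\ap\cup\{i_j,i_k\}$ to appear with coefficient zero in the $\cT C$ component, which in turn pins down a linear relation among at most four of the $i_l$. Because $n=5$, the sets $\aaa,\ap$ each have at most four elements and come from arithmetic progressions, and the number of candidate configurations is small. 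Enumerating by $(|\aaa|,|\ap|)\in\{(3,3),(3,4),(4,3),(4,4)\}$ and by the indices $j,k$, every surviving relation collapses to a sum equality $i_r+i_t=i_l+i_u$ together with the arithmetic-progression constraints defining $\aaa$ and $\ap$; the three families (i)--(iii) exhaust these solutions.

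Once the intersections are classified, the subdivision step is essentially formal: each intersection ray modulo $\cT C$ gives a single new node $P_{\aaa,j,\ap,k}$, splitting each of the two offending edges into two subedges whose cones meet only along the new ray. Weights are preserved because subdividing a maximal cone does not change the tropical multiplicity, which is constant on the relative interior by Theorem~\ref{thm:GT} and formula~\eqref{eq:ST}; after the refinement, every pair of maximal cones meets along a common face, so the resulting weighted fan refines $\cT\Sec^1(C)$ and is coarsened by the Gröbner fan of the defining ideal, hence coincides with it. The main obstacle I expect is the case analysis in step two: ensuring that no exotic relation among four-element arithmetic-progression subsets of a five-element set has been missed, and that the ``$i_4+i_1=i_2+i_3$'' configuration in (i) is the only one in which the two indices $j=5, k=0$ give a nontrivial partial overlap rather than a nodal crossing.
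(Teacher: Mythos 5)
Your proposal follows essentially the same route as the paper: an exhaustive pairwise comparison of the cones $\cT C + \RR_{\geq 0}\langle\text{edge}\rangle$, reduction of each potential intersection to an arithmetic relation among the exponents (yielding the sum conditions such as $i_r+i_t=i_l+i_u$), the conclusion that for $n=5$ there are no partial overlaps and only the three listed families of internal crossings, and then subdivision at the new nodes with weights preserved. The only difference is one of completeness rather than method: the paper substantiates the case analysis by exhibiting, for each of the three families, an explicit linear combination expressing the crossing point in both cones (with the sign discussion pinning down the inequality constraints), which is the verification your sketch defers to the enumeration step.
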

\begin{proof}
  The proof is tedious, yet elementary. As before, we consider all
  intersections between cones from $\cT C$ over edges of the tropical
  secant graph.  While there are no partial overlaps, we have three
  types of internal crossings. We write down each intersection point:
  \begin{enumerate}
  \item Suppose $|\aaa|= |\aaa'|=3$, say  $\aaa=\{i_j,i_r,i_u\},
  \ap=\{i_u,i_k,i_t\}$. Let $i_l\notin \aaa\cup \aaa'$.
   Assume the cones over the edges $F_{\aaa}D_{i_j}$
 and  $ F_{\ap}D_{i_k}$ intersect and $j>k$. Then, $j=5, r=4, t=1,
k=0$, $u=2$ or $3$, 
  $i_4+i_1=i_2+i_3$ and
\[
(i_r-i_l)\cdot F_{\aaa} +(i_j-i_r) \cdot D_{i_j} =
(i_r-i_u)\cdot  F_{\ap} + (i_t-i_k) \cdot D_{i_k} + (-i_l)
u\cdot {\bf 1} + (0, i_1, \ldots, i_5).
\]
\item Suppose $|\aaa|=4, |\aaa'|=3$, say $\aaa=\{i_j,i_r,i_k,i_u\},
  \ap=\{i_k,i_u,i_t\}$. Let $i_l\notin \aaa\cup \aaa'$ 
  and assume the cones over $F_{\aaa}D_{i_j}$ and 
   $ F_{\ap}D_{i_k}$ intersect. Then,
  $i_r+i_t=i_l+i_u$ and
\[
(i_r-i_l)\cdot F_{\aaa} +(i_j-i_r) \cdot D_{i_j} =
(i_l-i_t)\cdot  F_{\ap} + (i_u-i_k) \cdot D_{i_k} + (-i_l)
\cdot {\bf 1} + (0, i_1, \ldots, i_5).
\]
In this case, all coefficients (except for $-i_l$ and $1$) have the
same sign, which can be negative. If the latter occurs, we multiply
the previous identity by $-1$, obtaining the internal crossing of the
two cones. This expression gives us up to ten extra points, determined by the
inequalities $j>r>l>t, r>u>t$ and $u>k$, or $j<r<l<t, r<u<t$ and
$u<k$.
\item Suppose $|\aaa|=|\aaa'|=4$, say  $\aaa=\{i_j,i_u,i_k,i_r\},
  \ap=\{i_j,i_u,i_k,i_t\}$, and assume $j>k$. Let $i_l\notin \aaa\cup
  \aaa'$. If the cones
over  $F_{\aaa}D_{i_j}$ and $ F_{\ap}D_{i_k}$
  intersect, then $i_r+i_t=i_l+i_u$ and
\[
(i_r-i_l)\cdot F_{\aaa} +(i_j-i_u) \cdot D_{i_j} =
(i_l-i_t)\cdot  F_{\ap} + (i_u-i_k) \cdot D_{i_k} + (-i_l)
\cdot {\bf 1} + (0, i_1, \ldots, i_5).
\]
By requiring all mandatory coefficients to be positive, we obtain 
$j>u>k, r>l>t$ and $r>u>t$. This gives twelve possibilities for such
internal crossings. 
  \end{enumerate}\vspace{-3.2ex}

\end{proof}


\section{The Newton polytope of the secant hypersurface in $\pr^4$}
\label{sec:newt-polyt-secant}

In this section, we focus our attention on monomial curves in
$\pr^4$. In this situation, the first secant variety becomes a
hypersurface and we wish to obtain its definition homogeneous equation
from the tropical secant graph. A first step
towards a complete solution would be to compute the \emph{Newton
  polytope} of the defining equation $f$, i.e.\ the convex hull of all
exponent vectors such that the corresponding monomial appears with a
nonzero coefficient in $f$.

 We start by summarizing the
methods known to address this question. Secondly, we illustrate this
technique with an example appearing in the literature
\citep{SecantMonCurves}. We conclude the section by constructing the
Gr\"obner tropical secant graph of any monomial curve in $\pr^4$. The
latter is unnecessary for computing the associated Newton
polytope, but it allows us to predict some of its rich combinatorics.

We first explain the connection between $\cT(f)$ and $\NP(f)$ for an
irreducible polynomial $f$ in $n+1$ variables defined over $\CC$. For
a vector $w \in \RR^{n+1}$, the initial form $\init_w(f)$ is a
monomial if and only if $w$ is in the interior of a chamber of the
inner normal fan of $\NP(f)$.  The tropicalization of the hypersurface
$(f=0)$ is the union of all the codimension one cones in the normal fan of
$\NP(f)$.  The multiplicity of a maximal cone in $\cT (f)$ is the
lattice length of the edge of $\NP(f)$ normal to that cone.

A construction of the Newton polytope $\NP(f)$ from its
\emph{weighted} normal fan $\cT (f)$ was developed in
\cite{TropDiscr}, and it is known as the \emph{ray-shooting algorithm}.
We describe it in Theorem~\ref{thm:RayShoot} below:
\begin{theorem} \label{thm:RayShoot}
  Suppose $w \in \RR^{n+1}$ is a generic vector so that the ray $(w +
  \RR_{>0}\, e_i)$ intersects $\cT(f)$ only at regular points, for all
  $i$.  Let $\cP^w$ be the vertex of the Newton polytope 
  $\cP$ 
of $f$ that attains the maximum of $\{w \cdot x : x \in \cP\}$.
  Then, the $i^\text{th}$ coordinate of $\cP^w$ equals
  \begin{equation*}
\sum_{v} m_v \cdot |l^v_i|, 
\end{equation*}
where the sum is taken over all points $v \in \cT(f) \cap (w + \RR_{>
  0} e_i)$, $m_v$ is the multiplicity of $v$ in $\cT(f)$, and
$l^v_i$ is the $i^\text{th}$ coordinate of the primitive integral
normal vector $l^v$ to the maximal cone in $\cT(f)$ containing $v$.
\end{theorem}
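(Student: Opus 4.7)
The plan is to track the vertex $\cP^{\gamma(t)}$ of $\cP = \NP(f)$ as $\gamma(t) := w + t e_i$ sweeps along the ray ($t \geq 0$), and to telescope the discrete jumps to recover $\cP^w_i$. The essential ingredient is polytope--normal-fan duality: the codimension-one skeleton of the (inner) normal fan of $\cP$ equals $\cT(f)$; each maximal cone $\sigma\in \cT(f)$ is dual to an edge $E_\sigma$ of $\cP$; the primitive lattice direction of $E_\sigma$ agrees, up to sign, with the primitive normal $l^\sigma$; and the multiplicity $m_\sigma$ coincides with the lattice length of $E_\sigma$. Hence the two endpoints of $E_\sigma$ differ as lattice vectors by $\pm m_\sigma l^\sigma$, and in particular their $i$th coordinates differ by $m_\sigma |l^\sigma_i|$.

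First, genericity of $w$ ensures that $\gamma$ meets $\cT(f)$ only transversely at finitely many regular points $v_j = \gamma(t_j)$, each lying in the interior of a unique maximal cone $\sigma_j$. On each open interval between consecutive crossings, $\gamma(t)$ stays in a single top-dimensional chamber of the normal fan of $\cP$, so $\cP^{\gamma(t)}$ is constant there; at each $t_j$ the optimizing vertex switches between the two endpoints of $E_{\sigma_j}$, producing a jump in $\cP^{\gamma(t)}_i$ of absolute value $m_{v_j}|l^{v_j}_i|$ by the duality above.

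Next, I would control the sign of each jump via the support function $\Psi(t) = \max_{x \in \cP}\gamma(t) \cdot x$, which is convex and piecewise-linear in $t$ with slope $\cP^{\gamma(t)}_i$ on each smooth piece. Convexity of $\Psi$ forces these slopes to be monotone in $t$, so all the jumps have a common sign. For $t \gg 0$, $\gamma(t)$ lies in the normal cone of the vertex of $\cP$ extremizing $x_i$, hence $\cP^{\gamma(t)}_i$ eventually stabilizes to the extremal value of $x_i$ on $\cP$.

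Telescoping the jumps from $t = 0$ to $t = +\infty$ then gives the identity between $\cP^w_i$ (up to an additive constant coming from the extremal value of $x_i$ on $\cP$) and $\sum_v m_v |l^v_i|$. Since $\cP$ is defined only up to lattice translation when $f$ is a Laurent polynomial, one normalizes $\cP$ by a suitable translation in the $e_i$ direction so that this additive constant vanishes, yielding precisely the formula in the theorem. The main technical obstacle is a careful reconciliation of sign conventions --- argmax versus argmin, inner versus outer normal fan, and the chosen translation of $\cP$ --- but once these are pinned down the argument reduces to convexity of the support function together with polytope--fan duality.
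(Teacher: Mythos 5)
A preliminary remark: the paper does not prove this statement at all --- it is quoted from the ray-shooting result of \citep{TropDiscr} --- so your argument has to stand on its own. Its skeleton is indeed the standard one and the right idea: walk along $\gamma(t)=w+te_i$, observe that the optimal vertex of $\cP$ can only change when $\gamma(t)$ crosses a wall of the normal fan, that at such a crossing the two competing vertices are the endpoints of the edge dual to that wall and hence differ by (lattice length)$\times$(primitive edge direction) $=\pm m_v l^v$, use convexity of the support function to make the jumps monotone, and telescope.

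However, the two points you explicitly defer are exactly where the proof succeeds or fails, and as written they fail. First, the convention mismatch is not cosmetic. With the paper's min-convention (Definition~\ref{def:tropicalization}), $\cT(f)$ is the codimension-one skeleton of the \emph{inner} normal fan, whose walls separate chambers on which the \emph{minimizing} vertex is constant; the vertex you track, $\operatorname{argmax}_{x\in\cP}\gamma(t)\cdot x$, is constant on chambers of the \emph{outer} normal fan and in general does not change at all when $\gamma(t)$ crosses $\cT(f)$. So the pivotal sentence ``at each $t_j$ the optimizing vertex switches between the two endpoints of $E_{\sigma_j}$'' is false for the argmax under the stated conventions, and even when the two skeletons happen to coincide your telescoping yields $\cP^w_i=\max_{x\in\cP}x_i-\sum_v m_v\,|l^v_i|$, not the claimed formula (already for $f=1+x$, $w=-1$ the quoted ``maximum'' version gives $0\neq 1$). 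Carried out consistently --- track the argmin, whose $i$th coordinate drops by exactly $m_v|l^v_i|$ at each crossing of $\cT(f)$ --- the argument proves the formula for the vertex \emph{minimizing} $w\cdot x$; obtaining the statement literally as quoted requires flipping a convention (min for max, max-convention tropicalization, or shooting in direction $-e_i$), and a complete proof must say which. Second, the boundary term at $t\to\infty$: the claim that $\cP$ ``is defined only up to lattice translation'' and can be normalized away is not available, since $f$ is an honest polynomial and $\NP(f)$ is a well-defined polytope. What kills the additive constant is the hypothesis, stated by the paper immediately after the theorem, that $f$ is not divisible by any variable, so that $\cP$ touches every coordinate hyperplane and the limiting vertex has $i$th coordinate $0$. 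With the convention fixed and that hypothesis invoked, your telescoping argument closes; without them it does not.
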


This theorem allows us to compute the vertices of $\NP(f)$ when
$\NP(f)$ lies in the positive orthant and touches all coordinate
hyperplanes, i.e.\ when $f$ is not divisible by any non-constant
monomial.  Note that we do not need a fan structure on $\cT(f)$ to use
Theorem~\ref{thm:RayShoot}.  A description of $\cT(f)$ as a weighted
set provides enough information to compute vertices of $\NP(f)$ in any
generic direction. Obtaining a single vertex using
Theorem~\ref{thm:RayShoot} gives us the multidegree of $f$ with
respect to the grading given by the intrinsic lattice $\Lambda$.

The entire polytope $\NP(f)$ can be computed by iterating the
ray-shooting algorithm with different objective vectors (one per
chamber). A method to choose these vectors appropriately was developed
in \citep[Algorithm 2]{Mega09}: the \emph{walking algorithm}. The core of
the method is to keep track of the cones that we meet while
ray-shooting from a given objective vector, and use the list of such
cones to walk from chamber to chamber in the normal fan of
$\NP(f)$. Along the way, we pick objective vectors inside each
chamber, and we repeat the shooting algorithm.  We illustrate these
methods with an example:
\begin{example} \label{ex:Ranestad} The first secant variety of the
  monomial curve $t\mapsto (1:t^{30}: t^{45}: t^{55}:t^{78})$ in
  $\pr^4$ is known to be a hypersurface of degree 1820 \citep[Example
  3.3]{SecantMonCurves}. Here, we compute the tropical secant graph of
  the set $\{30, 45, 55, 78\}$. Using this data as input for
  the ray-shooting and walking algorithms, we calculate the Newton
  polytope of the secant threefold.

By Theorem~\ref{thm:MainThm}, we encode the tropical hypersurface
$\cT Sec^1(C)\subset \RR^5$ as a 
graph  in $\RR^3$ depicted in the left of Figure~\ref{fig:RanestadSecantAndGrobnerSecant}.
\begin{figure}
  \begin{minipage}[t]{.46\linewidth}
    \includegraphics[scale=0.43]{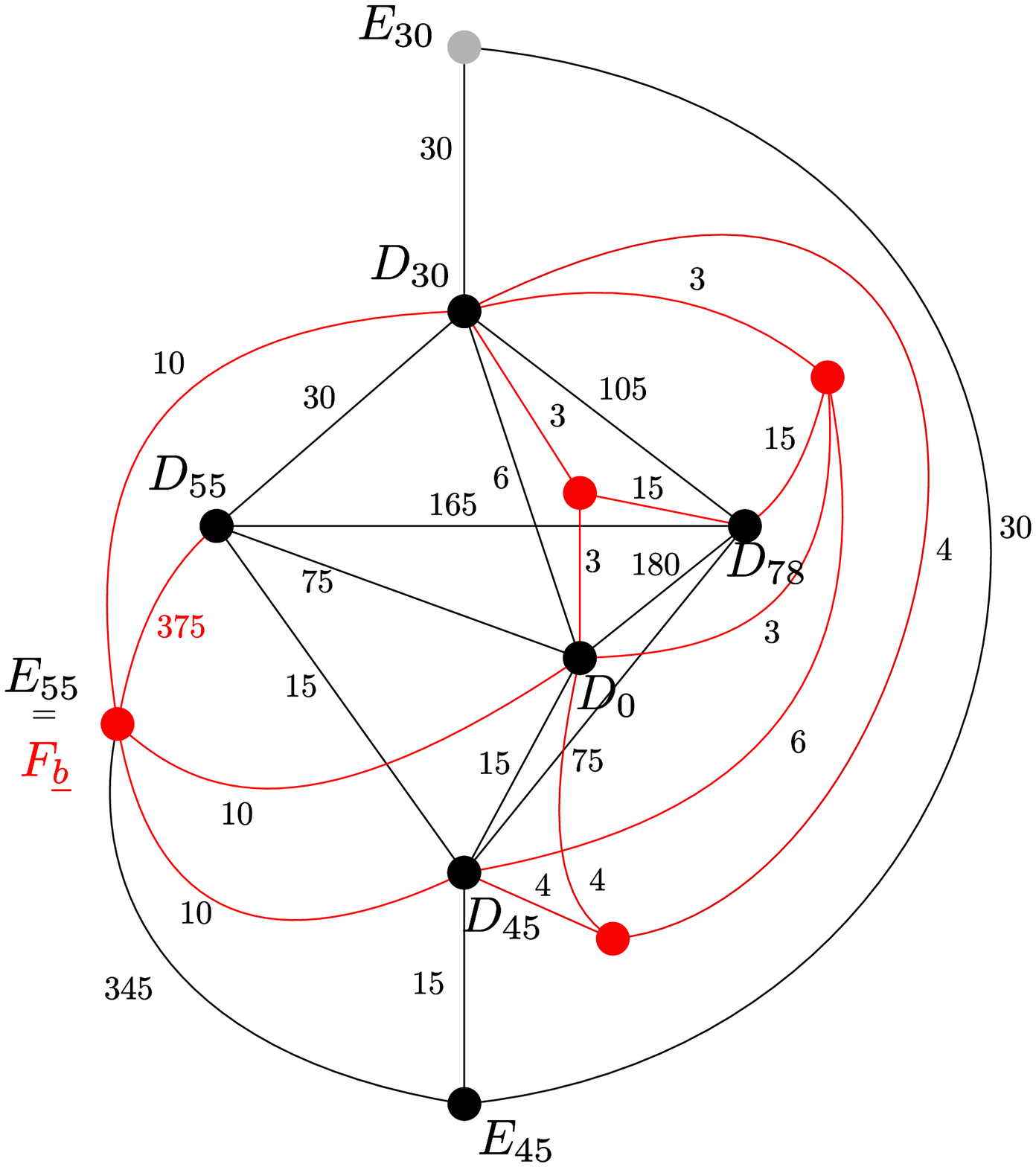}
  \end{minipage}\hfill
  \begin{minipage}[t]{.52\linewidth}
    \includegraphics[scale=0.43]{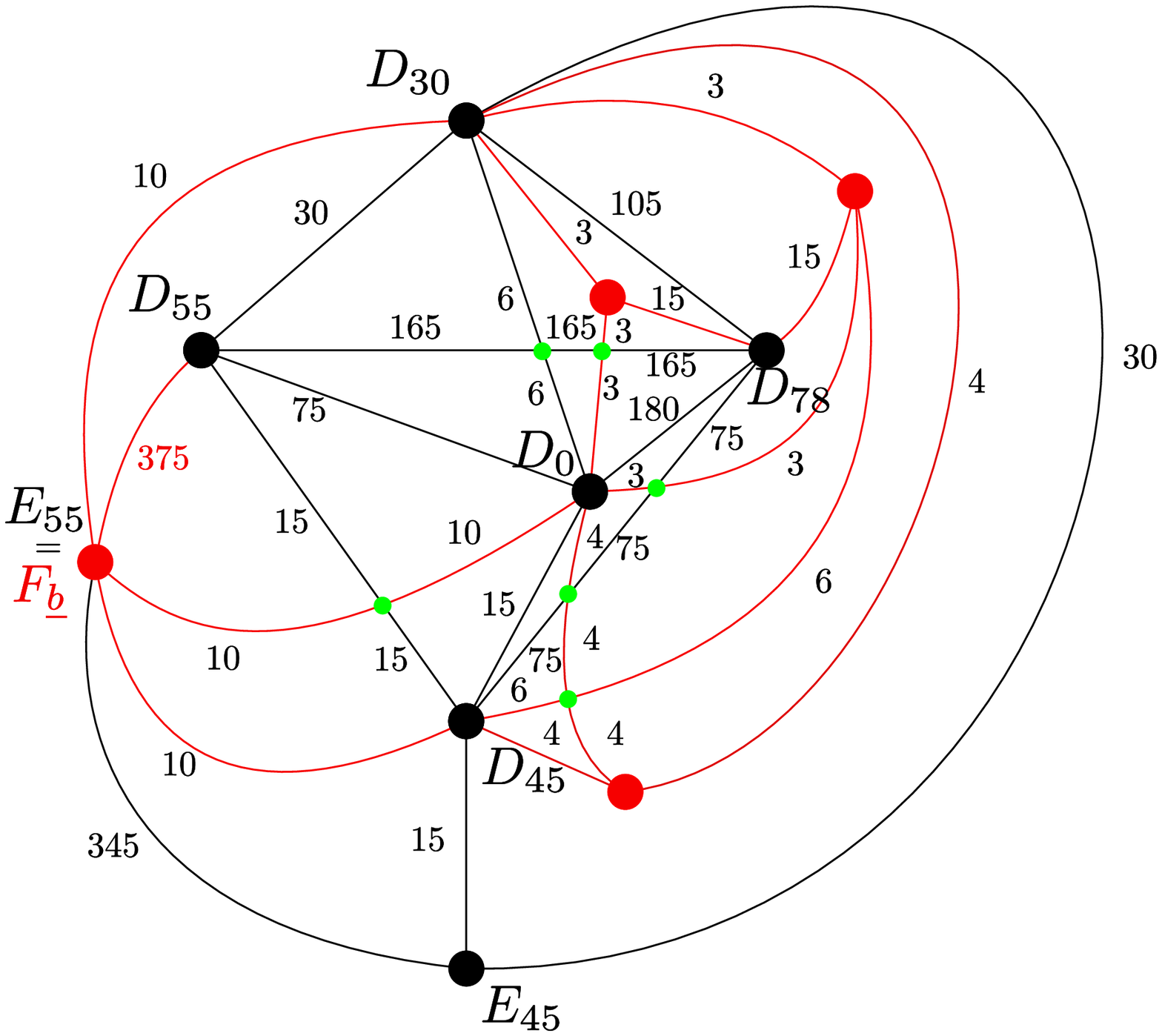}
  \end{minipage}
\caption{The tropical secant graph and the Gr\"obner tropical secant
  graph of the monomial curve $(1:t^{30}:t^{45}:t^{55}:t^{78})$ in
  $\pr^4$.}
  \label{fig:RanestadSecantAndGrobnerSecant}
\end{figure}
The eleven nodes in the graph have coordinates $D_0=e_0$,
$D_{30}=e_1$, $D_{45}=e_2$, $D_{55}=e_3$, $D_{78}=e_4$, $E_{30}= (0,
30, 30, 30, 30)$, $E_{45}=(0,30,45,45,45) $, $F_{0,30,45,55}\equiv
E_{55}=(0,30, 45, 55, 55)$, $F_{0, 30, 45}=(1,1,1,0,0)$,
$F_{0,30,78}=(1,1,0,0,1)$, and $F_{0, 30, 45, 78}=(1,1,1,0,1)$.  The
unlabeled red nodes in the
picture 
indicate nodes of type $F_{\aaa}$, where the subset $\aaa$ consists of
the indices of nodes $D_{i_j}$ adjacent to it.  Note that, in this
example, the nodes $E_{55}$ and $F_{0,30, 45, 55}$ are identified
modulo the lineality space, as predicted by
Definition~\ref{def:tropSecantGraph}. In particular, the edges $E_{55}D_{55}$ and $F_{0,30,45,55}D_{55}$ of
the master graph  coincide in
the tropical secant graph and the old weights add up to $375=345+30$,
as Figure~\ref{fig:RanestadSecantAndGrobnerSecant} shows.
After removing the bivalent gray node $E_{30}$, we have a graph with
$10$ nodes and $23$ edges.

Finally, we apply the ray-shooting and walking algorithms to recover
the Newton polytope of its defining equation.  From our computation,
we see that its multidegree with respect to the lattice $\Lambda
=\ZZ\langle {\bf{1}}, (0, 30, 45, 55, 78)\rangle$ is $(1\,820, 76\,950
)$, recovering the degree value from~\cite{SecantMonCurves}.  The
polytope has 24 vertices and $f$-vector $(24, 38, 16)$. The difference
between the number of facets of the polytope and the number of nodes
in the tropical secant graph shows that this graph does not reflect
the Gr\"obner fan structure of the tropical variety. In particular, we
are missing six vertices which correspond to internal crossings of the
graph. In the right of
Figure~\ref{fig:RanestadSecantAndGrobnerSecant}, we indicate these
six missing vertices with small green nodes.  After adding them to the
picture we obtain the Gr\"obner tropical secant graph of the curve
$C$, which is a planar graph in $\Sn^2$ with 16 nodes and 38 edges.
Each edge emanating from a new small green node corresponding to an internal
crossing inherits the weight of the original edge in the tropical
secant graph.  The complement of this graph has 24 connected
components, which matches the number of vertices of our polytope.
Using \texttt{LaTTe}, we see that the polytope contains $7\,566\,849$
lattice points, which gives an upper bound for the number of monomials
in its defining equation.
\end{example}

We conclude the section by building the Gr\"obner tropical secant
graph of any monomial curve in $\pr^4$. From our previous example, we
already know that the tropical secant graph can be non-planar. We
provide a theorem that indicates which internal crossings need to be
added to make it planar. However, these are not the only possible
intersections: we can have partial overlaps between edges. Luckily,
there are only three types of partial overlaps. We describe them in
the next lemma, which follows notation from
Definition~\ref{def:tropSecantGraph}:
\begin{lemma}\label{lm:n4Overlaps} The only partial overlaps among
  cones in the tropical secant graph of a monomial curve in $\pr^4$ are:
  \begin{enumerate}\item 
    $F_{i_1,i_2,i_3}D_{i_2}$ and $D_{i_2}E_{i_2}$, where
    $(i_4-i_2)i_1 = (i_4-i_3)i_2$. In this case, $E_{i_2}$ lies in the
    interior of the edge $F_{i_1,i_2,i_3}D_{i_2}$, and so we
    replace the edge $F_{i_1,i_2,i_3}D_{i_2}$ in the tropical
    secant graph by the two edges $F_{i_1,i_2,i_3}E_{i_2}$ 
    (with weight $m_{F_{i_1,i_2,i_3}D_{i_2}}$), and $D_{i_2}E_{i_2}$ (with new weight
    $m_{D_{i_2}E_{i_2}}+m_{F_{i_1,i_2,i_3}D_{i_2}}$).
  \item $F_{\aaa}D_{i_0}$ and $F_{\ap}D_{i_0}$, where
    $\aaa=\{0,i_l,i_t\}$ and $\ap=\{0,i_u,i_t\}$,
    $l>t>u$. Furthermore, if $m$ denotes the remaining index, then
    $i_m+i_t= i_l+i_u$ and $l>m>u$. Hence, $F_{\aaa} \in
    F_{\ap}D_{i_0}$, and we replace the edge $F_{\ap}D_{i_0}$
    by the edges ${F_{\ap} F_{\aaa}}$ 
(with weight $m_{F_{\ap}D_{i_0}}$), and ${F_{\aaa}D_{i_0}}$
    (endowed with the new weight $m_{F_{\aaa}D_{i_0}} +
    m_{F_{\ap}D_{i_0}}$).
  \item $F_{\aaa}D_{i_j}$ and $F_{\ap}D_{i_j}$, where
    $\aaa=\{0,i_j,i_t\}$ and $\ap=\{i_j,i_t,i_u\}$. Furthermore, if
    $m$ is denotes the remaining index, then $i_u=i_t+i_m$. Assume
    $t<j$. Then, $F_{\aaa} \in F_{\ap}D_{i_j}$. We replace the edge
    $F_{\ap}D_{i_j}$ by the edges $F_{\ap}F_{\aaa}$ (with weight
    $m_{F_{\ap}D_{i_j}}$), and $F_{\aaa}D_{i_j}$ (changing its weight
    to $m_{F_{\aaa}D_{i_j}} + m_{F_{\ap}D_{i_j}}$).  On the contrary,
    if $j<t$, then $F_{\ap} \in {F_{\aaa}D_{i_j}}$, and we replace the
    edge $F_{\aaa}D_{i_j}$ by the edges ${F_{\ap} F_{\aaa}}$ (with
    weight $m_{F_{\aaa}D_{i_j}}$) and ${F_{\ap}D_{i_j}}$ (with the new
    weight $m_{F_{\aaa}D_{i_j}} + m_{F_{\ap}D_{i_j}}$).
\end{enumerate}
\end{lemma}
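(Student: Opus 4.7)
The plan is to exhaust all unordered pairs of edges $(e, e')$ in the tropical secant graph for $n = 4$ and to test each pair for partial overlaps. Since the graph has at most five $D$-vertices, three $E$-vertices (with $h_{i_j}$ identified to $E_{i_j}$ after reducing by $\cT C$), and finitely many $F$-vertices indexed by subsets of $\{0, i_1, i_2, i_3, i_4\}$ of cardinality three or four cut out by an arithmetic progression, the enumeration is finite and mechanical.

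The first step is to replace the geometric overlap condition by a linear one. A partial overlap between the cones $\cT C + \RR_{\geq 0}\langle \mathbf{x}, \mathbf{y}\rangle$ and $\cT C + \RR_{\geq 0}\langle \mathbf{x}', \mathbf{y}'\rangle$ occurs precisely when, after possibly relabeling, one vertex of one edge, say $\mathbf{x}'$, lies in the relative interior of the other cone, i.e.
\[
\mathbf{x}' = a\, \mathbf{x} + b\, \mathbf{y} + c_1 \mathbf{1} + c_2 (0, i_1, i_2, i_3, i_4)
\]
with $a, b > 0$ and $c_1, c_2 \in \RR$. Matching the five coordinates produces a linear system whose solvability, together with the sign constraints on $a$ and $b$, forces a polynomial relation on the $i_j$'s. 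The second vertex $\mathbf{y}'$ then necessarily lies outside the cone over $e$, producing a partial rather than a complete overlap.

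The second step is to stratify the enumeration by the types of the edges involved: $D$-$E$ against $F$-$D$, $F$-$D$ against $F$-$D$ sharing a $D$-vertex, $F$-$D$ against $F$-$D$ with distinct $D$-vertices, and so on. Dimension considerations combined with positivity of $a,b$ immediately rule out most configurations; in particular, pairs without a shared vertex typically yield only three-dimensional crossings, which are already covered by Theorem~\ref{thm:GrobnerTropSecGraphForN=5} and its analogue in the $n=4$ setting. The surviving systems are then solved explicitly, giving exactly the three families (i), (ii), (iii): case (i) arises from $E_{i_2}$ lying on an edge $F_{i_1, i_2, i_3} D_{i_2}$, where matching the $i_1$ and $i_4$ coordinates modulo $\cT C$ pins down the identity $(i_4-i_2)i_1 = (i_4 - i_3) i_2$; cases (ii) and (iii) correspond to two $F$-to-$D$ edges sharing a $D$-vertex, and eliminating the two lineality coordinates yields the auxiliary identity $i_m + i_t = i_l + i_u$ together with the stated order inequalities on $\{l,m,t,u\}$.

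Finally, once one vertex is shown to lie in the interior of the other cone, the subdivision of the longer edge is forced, and the new weights follow from Remark~\ref{rm:SumMultGT}: the multiplicity at a regular point of $\cT \Sec^1(C)$ is the sum of the weights of all maximal cones containing it, so the overlapping portion of the longer edge inherits the sum of the two original weights, while the outer portion retains the weight of the longer edge alone. The main obstacle is the bookkeeping of the case analysis: different arithmetic progressions may give rise to the same subset $\aaa$, so a single pair of edges can be presented by several combinatorial configurations, and the positivity constraints on $a, b$ must be checked in each subcase. A partial symmetry — swapping the ``beginning'' and ``ending'' subsets by the involution $i_k \mapsto i_4 - i_{4-k}$ — roughly halves the workload, but completeness of the enumeration still requires careful tracking of orderings to ensure that no configuration is missed and no duplication is introduced.
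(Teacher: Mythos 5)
Your plan is essentially the paper's argument: the published proof is precisely the outcome of this pairwise analysis, recording for each of the three families an explicit one-parameter identity that writes the overlap points as positive combinations of the edge generators plus multiples of $\mathbf{1}$ and $(0,i_1,i_2,i_3,i_4)$, with the arithmetic conditions (e.g.\ $(i_4-i_2)i_1=(i_4-i_3)i_2$, $i_m+i_t=i_l+i_u$) falling out of the linear systems exactly as you describe, and the weights adding as in Remark~\ref{rm:SumMultGT}. Two small points to tighten: your vertex inventory should also include the bivalent $F_{\aaa}$ with $|\aaa|=2$, i.e.\ the $D_{i_j}D_{i_k}$ edges (they must be checked for the ``only'' part of the lemma, though they turn out to produce only crossings), and in case (i) the second vertex $D_{i_2}$ is a shared boundary ray rather than lying outside the other cone, so that overlap is a containment and its partialness follows simply from the two cones being distinct.
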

\begin{proof} Let $\mathbf{l_2}=(0, i_1, i_2, i_3, i_4)$.
For any $\mu\geq 0$, each intersection point can be written as:
  \begin{enumerate}\item 
    $\displaystyle{ \frac{i_2(i_4-i_3)}{i_4}\cdot F_{\aaa} +
      (\frac{i_2(i_3-i_2)}{i_4} + \mu) \cdot D_{i_2} =
 E_{i_2} + \mu \cdot D_{i_2} +
      \frac{-i_2}{i_4} \cdot \mathbf{l_2}}$,
  \item     $\displaystyle{ 
 (i_l-i_m)\cdot F_{\aaa} + \mu \cdot D_{i_0} = (i_m-i_u) \cdot
    F_{\ap} + (i_t+ \mu) \cdot D_{i_0} + (-i_m) \cdot {\bf 1} + \mathbf{l_2}
}$,
\item
        $\displaystyle{ i_m \cdot F_{\aaa} + (i_t+\mu) \cdot D_{i_j} = i_t\cdot F_{\ap} +
    (i_j+\mu) \cdot D_{i_j} + i_m \cdot {\bf 1} -
\mathbf{l_2}}$.
  \end{enumerate}
\vspace{-0.5cm}
\end{proof}
From the previous lemma, we get a modification of the tropical secant
graph possibly with internal crossings but without partial overlaps.
Finally, using this new graph, we construct the Gr\"obner tropical
secant graph as indicated by the following theorem, which we
illustrate in Example~\ref{ex:ranestadrevisited}:
\begin{theorem}\label{thm:GrobnerTropSecGraphForN=4}
  The Gr\"obner tropical secant graph for the monomial curve
  $(1:t^{i_1}:t^{i_2}: t^{i_3}: t^{i_4})$ in $\pr^4$ may be obtained
  by adding finitely many internal crossings to the tropical secant
  graph (after modifications using Lemma~\ref{lm:n4Overlaps}).
  These internal crossing come in two types. The first one consists of
  points $P_{\aaa, j, \ap, k} \in (\cT C + \RR_{\geq 0} \langle
  F_{\aaa},D_{i_j}\rangle )\bigcap (\cT C + \RR_{\geq 0}\langle
  F_{\ap,}D_{i_k}\rangle )$, where:
  \begin{enumerate}
  \item 
$\aaa=\{i_j,i_t\}, \ap=\{i_j,i_t, i_l, i_k\}$, where either $j>t>s>k$ and
    $u>s$ or $j<t<l<k$ and $u<l$;
\item 
$\aaa=\{i_j,i_t\}, \ap=\{i_j, i_l, i_k\}$,
where either $t>u>l>k, j>l$ and $i_j+i_u\geq i_l+i_t$ or
$t<u<l<k, j<s$ and $i_j+i_u\leq i_l+i_t$;
\item 
 $\aaa=\{i_j,i_t\}, \ap=\{i_t, i_l, i_k\}$, 
where either $u>l>k, j>t>l$ and $i_j+i_l\geq i_u+i_t$ or
$u<l<k, j<t<l$ and $i_j+i_l\leq i_u+i_t$;
\item 
$\aaa=\{i_t,i_j,i_u,i_k\}, \ap=\{i_j, i_u, i_k\}$,
where either $j>u>k, t>u>k$ and $t>l$  or
$j<u<k, t<u<k$ and $t<l$;
\item 
$\aaa=\{i_j,i_u,i_k\}, \ap=\{i_u, i_t, i_k\}$, 
where either $j>s>t, j>u>k, u>t$ and $i_j+i_t\geq i_u+i_l$  or
$j<l<t, j<u<k, u<t$ and $i_j+i_t\leq i_u+i_l$;

\item 
 $\aaa=\{i_u,i_j,i_k\}, \ap=\{i_j,i_k, i_t\}$, 
where $u>l>t, j>t, u>k, j>k$, $i_l+i_j\geq i_u+i_t\geq i_l+i_k$; 

\item 
$\aaa=\{i_j,i_u,i_k\}, \ap=\{i_k, i_t\}$, where either $j>u>l>t, u>k$ and
$i_u+i_t\geq i_l+i_k$ or
$j<u<l<t, u<k$ and
$i_u+i_t\leq i_l+i_k$;

\item 
$\aaa =\{i_j, i_l, i_u, i_k\}, \ap=\{i_u, i_t, i_k\}$, where either
$j>l>u>k$ and $u>t$ or $j<l<u<k$ and $u<t$;

\item 
$\aaa=\{i_l,i_j, i_u,i_k\}, \ap=\{i_j,i_u, i_t, i_k\}$, where $s>u>t$ and $j>u>k$;
\item 
$\aaa=\{i_4,i_3, i_2\}, \ap=\{i_2,i_1,0\}$, $j=4$ and $k=0$;

\item 
$\aaa=\{i_4,i_3\}, \ap=\{ i_1, i_0\}$,  $j=4$ and $k=0$.
  \end{enumerate}
The second class satisfies $P_{j,\aaa
    , k} \in (\cT C + \RR_{\geq 0} \langle E_{i_j},E_{i_j+1}\rangle  )\bigcap
(\cT C + \RR_{\geq 0}  \langle F_{\aaa},D_{i_k}\rangle )$, where:
\begin{enumerate}
\item $\aaa=\{i_1, i_2,i_3\}$, $j=k=2$ and $i_1(i_4-i_2)\geq i_2(i_4-i_3)$;
\item $\aaa=\{i_1, i_2,i_4\}$, $j=k=2$ and $i_3(i_2-i_1)\geq i_2(i_4-i_1)$;
\item $\aaa=\{i_1, i_2,i_3\}$, $j=1$, $k=2$ and $i_4(i_2-i_1) \geq i_2(i_3-i_1)$;
\item $\aaa=\{i_0, i_2,i_3\}$, $j=1$, $k=2$ and $i_3(i_2-i_1)\geq i_2(i_4-i_1)$.
\end{enumerate}
\end{theorem}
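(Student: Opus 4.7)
The plan is to mirror the approach of Theorem~\ref{thm:GrobnerTropSecGraphForN=5}, but with a substantially more elaborate case analysis. By Proposition~\ref{pr:TropOfSecant} and the construction carried out in Theorem~\ref{thm:MainThm}, $\cT\Sec^1(C)$ is covered by four-dimensional cones of the form $\cT C + \RR_{\geq 0}\sigma$, where $\sigma$ ranges over the edges of the tropical secant graph after the preliminary surgery of Lemma~\ref{lm:n4Overlaps} has excised the three families of partial overlaps. To obtain the Gr\"obner fan structure, I need every pairwise intersection of two such four-dimensional cones to be a common face; otherwise the intersection is an internal crossing and must be promoted to a new node subdividing both edges.

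First, I would enumerate the combinatorial types of candidate edge pairs. Only edges incident to some node $F_{\aaa}$ or some spine node $E_{i_j}$ can generate new internal crossings, since the edges $D_{i_j}E_{i_j}$ and $D_{i_j}D_{i_k}$ share at most a ray as a face, and intersections among the spine edges $E_{i_j}E_{i_{j+1}}$ are fully resolved by Lemma~\ref{lm:coarseningTropicalgraph}. For each candidate pair I would parameterize an arbitrary point of the intersection as a nonnegative combination of the two pairs of generators plus a free $\RR$-span by $\mathbf{1}$ and $(0,i_1,i_2,i_3,i_4)$ (the generators of the lineality lattice of $\cT C$), and then solve the resulting $5\times 5$ linear system. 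Each nontrivial intersection produces (a) one linear relation on the exponents, such as $i_u+i_t=i_l+i_r$ or $i_1(i_4-i_2)=i_2(i_4-i_3)$, and (b) a collection of strict inequalities on the indices that record which coefficients are nonnegative.

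Second, I would organize the crossings of type $(F_{\aaa}D_{i_j},F_{\ap}D_{i_k})$ by the sizes $|\aaa|,|\ap|\in\{2,3,4\}$ and by $|\aaa\cap \ap|$, which yields precisely the eleven cases $(1)$--$(11)$. The crossings of type $(E_{i_j}E_{i_{j+1}},F_{\aaa}D_{i_k})$ are handled in parallel: since the coordinates of $E_{i_j}$ have the explicit form $(0,i_1,\ldots,i_{j-1},i_j,\ldots,i_j)$, the nontrivial intersection condition reduces to comparing expressions of the form $i_j(i_l-i_k)$ with products of differences of the exponents, and this carves out exactly the four listed cases. The weight on each subdivided edge is inherited directly from the weight of the original edge, because the multiplicity of a regular point is computed locally from the multiplicity of the underlying maximal cone in $\cT Z$ together with a lattice index, and neither changes when an edge is cut at an interior point while preserving its pair of generating rays.

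The main obstacle is the combinatorial bookkeeping. Unlike for $n\geq 5$, in $\pr^4$ the scarcity of exponents forces many arithmetic progressions to coincide and many $2$-element subsets $\aaa$ to survive as nodes whose contribution to intersections must be tracked before they are finally eliminated as bivalent; moreover, the partial overlaps handled by Lemma~\ref{lm:n4Overlaps} interact with the crossings and must be carefully excised from the list so as not to be double-counted. The hard part will therefore be to verify that the eleven plus four enumerated conditions are mutually exclusive, jointly exhaustive, and each genuinely internal (i.e.\ not already a face of one of the cones). Once this classification is settled, the verification of each individual crossing is a short explicit linear algebra computation in $\RR^5/\cT C$.
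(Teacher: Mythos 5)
Your proposal follows essentially the same route as the paper: the paper's proof likewise reduces to an exhaustive pairwise comparison of the cones $\cT C + \RR_{\geq 0}\sigma$ over edges of the (Lemma~\ref{lm:n4Overlaps}-modified) tropical secant graph, and its entire content is the list of explicit linear combinations, modulo $\mathbf{1}$ and $(0,i_1,i_2,i_3,i_4)$, exhibiting each intersection point—exactly the output of the $5\times 5$ linear systems you describe, with the positivity of the coefficients yielding the stated inequalities. The only difference is that the paper actually records these fifteen identities, whereas you defer that computation; as a strategy the two are the same.
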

\begin{proof}
  The proof is very similar to the one we outlined for
  Theorem~\ref{thm:GrobnerTropSecGraphForN=5}, so we only give the
 linear combination expressing each intersection point described in the
  statement. As usual, we let $\mathbf{l_2}=(0,i_1,i_2,i_3,i_4)\in \Lambda
  \subset \ZZ^5$.  The internal crossings of the first type are listed
  below:
  \begin{enumerate}
  \item 
For simplicity, assume $j>t>l>k$ and $u>l$ (if not, we multiply
    the expression by $-1$ to obtain the intersection point):
\[
(i_t-i_l) \cdot F_{i_j, i_t} + (i_j-i_t) \cdot D_{i_j} = (i_u-i_l)
\cdot F_{i_j, i_t, i_l, i_k} + (i_l-i_k) \cdot D_{i_k} -i_u \cdot {\bf 1} +
\mathbf{l_2}.
\]
\item 
Assume $i_j+i_u\geq i_l+i_t$. The intersection point is:
\[
(i_t-i_u) \cdot F_{i_t, i_j} + (i_j+i_u-i_l-i_t) \cdot D_{i_j} =
(i_u-i_l) \cdot F_{i_j, i_l, i_k} + (i_l-i_k) \cdot D_{i_k} - i_u
\cdot {\bf
  1} + \mathbf{l_2}.
\]
Note that if $i_j+i_u= i_l+i_t$, the intersection point is
$F_{i_j,i_t}$.
\item 
Assume $i_j+i_l\geq i_u+i_t$. The intersection point is:
\[
(i_t-i_l) \cdot F_{i_t, i_j} + (i_j+i_l-i_u-i_t) \cdot D_{i_j} =
(i_u-i_l) \cdot F_{i_t, i_l, i_k} + (i_l-i_k) \cdot D_{i_k} - i_u\cdot
{\bf
  1} + \mathbf{l_2}.
\]
Again, if $i_j+i_u= i_l+i_t$, the intersection point is
$F_{i_j,i_t}$.
\item 
Assume $u>k$. The intersection point is:
\[
(i_t-i_l) \cdot F_{i_t, i_j,i_u,i_k} + (i_j-i_u) \cdot D_{i_j} =
(i_t-i_u) \cdot F_{i_j, i_u, i_k} + (i_u-i_k) \cdot D_{i_k} - i_l\cdot
{\bf
  1} + \mathbf{l_2}.
\]
\item 
Assume $i_j+i_t\geq i_u+i_l$. The intersection point is:
\[
(i_u-i_t) \cdot F_{i_j,i_u,i_k} + (i_j+i_t-i_u-i_l) \cdot D_{i_j} =
(i_l-i_t) \cdot F_{i_u, i_t, i_k} + (i_u-i_k) \cdot D_{i_k} - i_l\cdot
{\bf
  1} + \mathbf{l_2}.
\]
If $i_j+i_t=i_u+i_l$, the intersection point is $F_{i_j,i_u,i_k}$.
\item 
By symmetry, we can assume $j>k$. The intersection point is:
\[
(i_r-i_l) \cdot F_{i_r,i_j,i_k} + (i_l+i_j-i_r-i_t) \cdot D_{i_j} =
(i_l-i_t) \cdot F_{i_j, i_k, i_t} + (i_r+i_t-i_l-i_k) \cdot D_{i_k} - i_l\cdot
{\bf
  1} + \mathbf{l_2}.
\]
At most one of the equalities $i_l+i_j=i_i+i_t$, $i_r+i_t=i_l+i_k$
holds, and in this case, $F_{\aaa}$ or $F_{\ap}$ is the
intersection point.
\item 
Assume $i_r+i_t\geq i_l+i_k$. The intersection point is:
\[
(i_r-i_l) \cdot F_{i_r,i_j,i_k} + (i_j-i_r) \cdot D_{i_j} =
(i_l-i_t) \cdot F_{i_k, i_t} + (i_r+i_t-i_l-i_k) \cdot D_{i_k} - i_l\cdot
{\bf
  1} + \mathbf{l_2}.
\]
If $i_r+i_t=i_l+i_k$, the intersection point is $F_{i_k,i_t}$.
\item 
Assume $u>t$. The intersection point is:
\[
(i_u-i_t) \cdot F_{i_j,i_l,i_u,i_k} + (i_j-i_l) \cdot D_{i_j} =
(i_l-i_u) \cdot F_{i_u, i_t,i_k} + (i_u-i_k) \cdot D_{i_k}+ (i_u-i_t-i_l)\cdot
{\bf
  1} + \mathbf{l_2}.
\]
\item 
 The intersection point is:
\[
(i_u-i_t) \cdot F_{i_j,i_l,i_u,i_k} + (i_j-i_u) \cdot D_{i_j} =
(i_l-i_u) \cdot F_{i_j,i_u, i_t,i_k} + (i_u-i_k) \cdot D_{i_k}+ (i_u-i_t-i_l)\cdot
{\bf
  1} + \mathbf{l_2}.
\]
\item 
 The intersection point is:
\[
(i_2-i_1) \cdot F_{i_4,i_3,i_2} + (i_4-i_3) \cdot D_{i_4} =
(i_3-i_2) \cdot F_{i_2, i_1,0} + i_1 \cdot D_{0}+ (i_2-i_3-i_1)\cdot
{\bf
  1} + \mathbf{l_2}.
\]
\item 
The intersection point is:
\[
(i_3-i_2) \cdot F_{i_4, i_3} + (i_4-i_3) \cdot D_{i_4} =
(i_2-i_1) \cdot F_{i_1, i_0} + i_1 \cdot D_{i_0} - i_2 \cdot {\bf
  1} + \mathbf{l_2}.
\]
\end{enumerate}
We list the internal crossings of the second type:
\begin{enumerate}
\item
The intersection point is:
\[
\frac{i_3-i_1}{i_3-i_2}\cdot E_{i_2} +
\frac{i_1(i_4-i_2)-i_2(i_4-i_3)}{(i_4-i_3)(i_3-i_2)}\cdot E_{i_3} =
i_1  \cdot F_{i_1, i_2, i_3} + (i_2-i_1)  \cdot D_{i_2} 
+ \frac{i_1}{i_4-i_3} \cdot  \mathbf{l_2}.
\]
The positivity of the coefficient of $E_{i_3}$ gives the inequality
constraint in the statement. Note that if $i_1(i_4-i_2)=i_2(i_4-i_3)$
then the crossing point is $E_{i_2}$, which is already a node in the
graph, but in this case it lies in the interior of the edge
$F_{i_1,i_2,i_3}D_{i_2}$. 
\item
The internal crossing is:
\[
\frac{i_3}{i_3-i_2}\cdot E_{i_2}
+\frac{i_3(i_2-i_1)-i_2(i_4-i_1)}{(i_4-i_3)(i_3-i_2)} \cdot E_{i_3} =
i_1 \cdot F_{i_1, i_2, i_4} + i_2 \cdot D_{i_2} -\frac{i_1}{i_4-i_3}
\cdot \mathbf{l_2},
\]
with the positivity constraint for the coefficient of $E_{i_3}$. If
$i_3(i_2-i_1)=i_2(i_4-i_1)$ the crossing point is $E_{i_2}$ and it
lies in the interior of the edge $F_{i_1,i_2,i_4}D_{i_2}$. 
\item The intersection point is:
\[
\frac{i_4(i_2-i_1)-i_2(i_3-i_1)}{i_1(i_2-i_1)}\cdot E_{i_1} + \frac{i_3-i_1}{i_2-i_1}\cdot E_{i_2} =  (i_4-i_3)\cdot F_{i_1, i_2, i_3} + (i_3-i_2) \cdot D_{i_2} +    \mathbf{l_2},
\]
with the positivity constraint for the coefficient of $E_{i_1}$.  If
$i_4(i_2-i_1)=i_2(i_3-i_1)$ the crossing point is $E_{i_2}$ and it
lies in the interior of the edge $F_{i_1,i_2,i_3}D_{i_2}$.
\item The intersection point is:
\[
\frac{i_3(i_2-i_1)-i_2(i_4-i_1)}{i_1(i_2-i_1)}\cdot E_{i_1} + \frac{i_4-i_1}{i_2-i_1}\cdot E_{i_2} = (i_4-i_3) \cdot F_{i_0, i_2, i_3} +  (i_3-i_2)\cdot D_{i_2} 
-  (i_4-i_3)\cdot {\bf
  1} + \mathbf{l_2},
\]
with positivity constraint for the coefficient of $E_{i_1}$.  If
$i_3(i_2-i_1)=i_2(i_4-i_1)$ the crossing point is $E_{i_2}$ and it
lies in the interior of the edge $F_{i_0,i_2,i_3}D_{i_2}$.
  \end{enumerate}

  \vspace{-0.55cm}\end{proof} 
\begin{example} [\textbf{Example~\ref{ex:Ranestad}, revisited}]\label{ex:ranestadrevisited}
  As we saw, there are no new overlaps of edges, so
  Lemma~\ref{lm:n4Overlaps} does not apply here. Using
  Theorem~\ref{thm:GrobnerTropSecGraphForN=4}, we can explain the six
  new small green nodes we added to build the Gr\"obner tropical
  secant graph (Figure~\ref{fig:RanestadSecantAndGrobnerSecant}). They
  come from six internal crossings of the first type between the edges
  $F_{\aaa}D_{i_j}$ and $F_{\ap}D_{0}$, where: $ \aaa=\{55,78\}, j=
  78, \ap=\{0,30, 78\}$ (case (ii)
  ); $\aaa=\{55,78\}, j= 78, \ap=\{0,30\}$ (case (xi)
  ); $\aaa=\{45,78\}, j= 78, \ap=\{0,30, 45, 78\}$ (case (i)
  ); $ \aaa=\{0,30,45,78\}, j=45, \ap=\{0,30, 45\}$ (case (iv)
  ); $ \aaa=\{45,78\}, j=78, \ap=\{0,30, 45\}$ (case (iii)
  ); and $ \aaa=\{45,55\}, j= 55, \ap=\{0,30, 45, 55\}$ (case (i)
  ).
\end{example}


\section{Chow polytopes, tropical secants of lines, toric arrangements and beyond}
\label{sec:chow-polytope-secant}

The implicitization methods discussed in the previous section can be
generalized to monomial curves in higher dimensional projective
spaces. In this
case, one can recover the \emph{Chow polytope} of the secant variety
by a generalization of the ray-shooting algorithm, known as the
\emph{orthant-shooting} algorithm \citep[Theorem
2.2]{TropDiscr}. Instead of shooting rays, we shoot orthants (i.e.\
cones spanned by subsets of the canonical basis of $\RR^{n+1}$) of
dimension equal to $n-3$ (the codimension of our variety). A formula
similar to the one described in Theorem~\ref{thm:RayShoot} gives us
the vertex of the Chow polytope associated to a sufficiently generic
input objective vector. However, it is not easy to given an analog
to the walking algorithm. The difficulty comes from the fact that, a
priori, there is no canonical way of walking along the complement of
the tropical variety. Recently, Alex Fink has developed a method to
reduce the computation of the Chow polytope to the codimension one
setting, based on the 
{orthant shooting algorithm}~\cite{ChowTropical}. His approach
allows us to use the techniques discussed for the secant hypersurface
case~\cite{ChowTropical}. Thanks to his results, existing software
from~\cite{Mega09} can be used in higher codimension examples, such
as 
rational normal curves in $\pr^n$.

Before giving a numerical example, we explain the method presented
in~\cite{ChowTropical} for computing Chow polytopes. We define a map,
called the \emph{Chow map $ch$}, which takes a tropical variety $\cT X$ of
dimension $d$ in $\TP\pr^n$ to its \emph{tropical Chow hypersurface},
$ch(\cT X)=\cT{X}\boxplus {\mathcal{L}_{n-d-1}}^{\refl}$
\cite[Definition 5.2]{ChowTropical}. The set $ch(\cT X)$ is 
precisely the tropicalization of a hypersurface in $\pr^n$ whose
Newton polytope equals the desired Chow polytope of $X$~\cite[Theorem
5.1]{ChowTropical}. 

We now describe the tropical hypersurface $ch(\cT X)$ as the union of
weighted $(n-1)$-dimensional cones in $\TP\pr^n$. First, we pick all
weighted maximal $d$-dimensional cones $(\sigma, m_{\sigma})$ in $\cT
X\subset \TP\pr^n$, and all $n-d-1$ cones $C_{J}$ generated by subsets
of $n-d-1$ vectors among $\{-e_0, -e_1, \ldots, -e_n\}$, i.e.\ the
negative of the elements in the canonical basis of $\RR^{n+1}$. The
subscript $J$ indicates the indices of the vectors chosen from
this basis. For simplicity, we assume that each cone $\sigma$ is
simplicial and that it is spanned by integer vectors $\{v^{\sigma}_1,
\ldots, v^{\sigma}_d\}$ in $\TP\pr^n$. Secondly, we take the Minkowski sum
of $\sigma$ and $C_J$ for every possible pair, and we check if the
cone $\sigma +C_J$ in $\TP\pr^n$ has codimension 1. If so, this means
that the matrix
\[
A:=\left(\begin{array}[c]{c|c|c|c|c|c}
  v^{\sigma}_1 & \ldots & v^{\sigma}_d & -e_{j_1} & \ldots & -e_{j_{n-d-1}}
\end{array}
\right),
\]
is full dimensional, for $J=\{j_1, \ldots, j_{n-d-1}\}$. We weight
the  new cone $\sigma + C_J$ by 
\begin{equation}
m_{\sigma+C_J}:=m_{\sigma}\cdot \gcd(\text{maximal } (n-d-1)\times (n-d-1)\text{--minors of
}A)\label{eq:1}.
\end{equation}
If the matrix $A$ is not of full rank, we discard the cone $\sigma
+C_J$ from the list of valid combinations and we move on to the next
pair. The set $ch(\cT X)$ is the union of the valid combinations, with
weights given by formula~(\ref{eq:1}).

\begin{example}\label{ex:RNC}
  The canonical example of a monomial curve in $\pr^{n}$ is the \emph{rational
  normal curve} $t\mapsto (1:t:t^2:\ldots:t^n)$. 
These curves and their secants have been extensively studied in the
past. They are known to be determinantal varieties (\cite[Proposition
9.7]{Harris},\cite[Proposition 2.2]{HankelSecants}) defined by the
$3\times 3$-minors of the $j\times (n-j+2)$ \emph{Hankel} matrix:
\[
x_j:=
\left(
  \begin{array}{ccccc}
x_1 & x_2 & x_3 & \ldots & x_{n-j+2}\\
x_2 & x_3 & \ldots & \ldots & \ldots\\
x_3 &  \ldots & \ldots & \ldots & \ldots\\
\vdots  & \vdots & \vdots &\vdots & \vdots\\
x_j & x_{j+1} & \ldots & x_{n} & x_{n+1}
\end{array}
\right).
\]
The ideal generated by the $3\times 3$-minors of this matrix is
independent of the index $j$ \citep[Corollary 2.2]{Conca} and it is a
set-theoretic complete intersection \cite{Valla} of degree
$\binom{n-1}{2}$ \cite{Conca}.

Using Theorem~\ref{thm:MainThm}, we compute the tropical secant
graph of the rational normal curve in $\pr^n$.  It has
 $n+1$ nodes $D_{j}=e_{j}$ ($0\leq j \leq n$), $n-3$ nodes
$E_{j}=(0, 1, 2, \ldots, j , \ldots, j)$ ($2\leq j \leq n-2$) and
$(\lfloor n/2\rfloor + 1) (\lfloor n/2\rfloor -2)/2$ nodes $F_{\aaa}$,
where $\aaa = 
\aaa_{r,u}:=
\{ u+ k \cdot r: k\in \NN \}
\cap \{0, \ldots, n\}$ for some $0\leq u < r$ and $1<r< \lfloor
n/2\rfloor$. In addition, it has one or two nodes $F_{\aaa}$ where
$r=\lfloor n/2\rfloor$ and $u=0$ (for $n$ even), or $u=0,1$ (for $n$
odd).

The graph has $2n-5$ edges labeled $E_j E_{j+1}$, ($2\leq j \leq n-3$), $E_j
D_j$ ($2\leq j \leq n-2$), $D_1E_2$, $E_{n-2}D_{n-1}$,  with weight
one. It also has edges $F_{\aaa_{r,u}}D_{j}$ ($j\in \aaa_{r,u}$),
 with weight $\varphi(r)/2$ if $r>2$, or weight 1 if
$r=2$. In addition, it has  $\lceil n/2\rceil (n+3+\lfloor
n/2\rfloor)/2$ edges $D_jD_{j+r}$ ($0\leq j <n-r$ and $r> \lfloor
n/2\rfloor$) with weight $\varphi(r)/2$, and $\lfloor n/2\rfloor -2$ 
edges $D_{j}D_{j+\lfloor n/2\rfloor}$ ($2\leq j \leq \lfloor
n/2\rfloor$) with weight $\varphi( \lfloor n/2\rfloor)/2$. Finally, 
if $n$ is even, the edge $D_{1} D_{1+n/2}$ has weight
$\varphi(n/2)/2$ if $n>2$ and weight $1$ if $n=2$.

We illustrate the previous construction in the case $n=4$.  After
removing the bivalent node $D_2=e_2$, 
we are left with a graph with six nodes $D_0=e_0$,
$D_1=e_1$, $D_3=e_3$, $D_4=e_4$, $E_2=(0,1,2,2,2)$ and
$F_{0,2,4}=(1,0,1,0,1)$ and nine edges, all with trivial weight
$1$. This graph is the 1-skeleton of a bipyramid
(Figure~\ref{fig:RNC4,6}).
\begin{figure}
  \centering
 \begin{minipage}[c]{.41\linewidth}
  \includegraphics[scale=0.3]{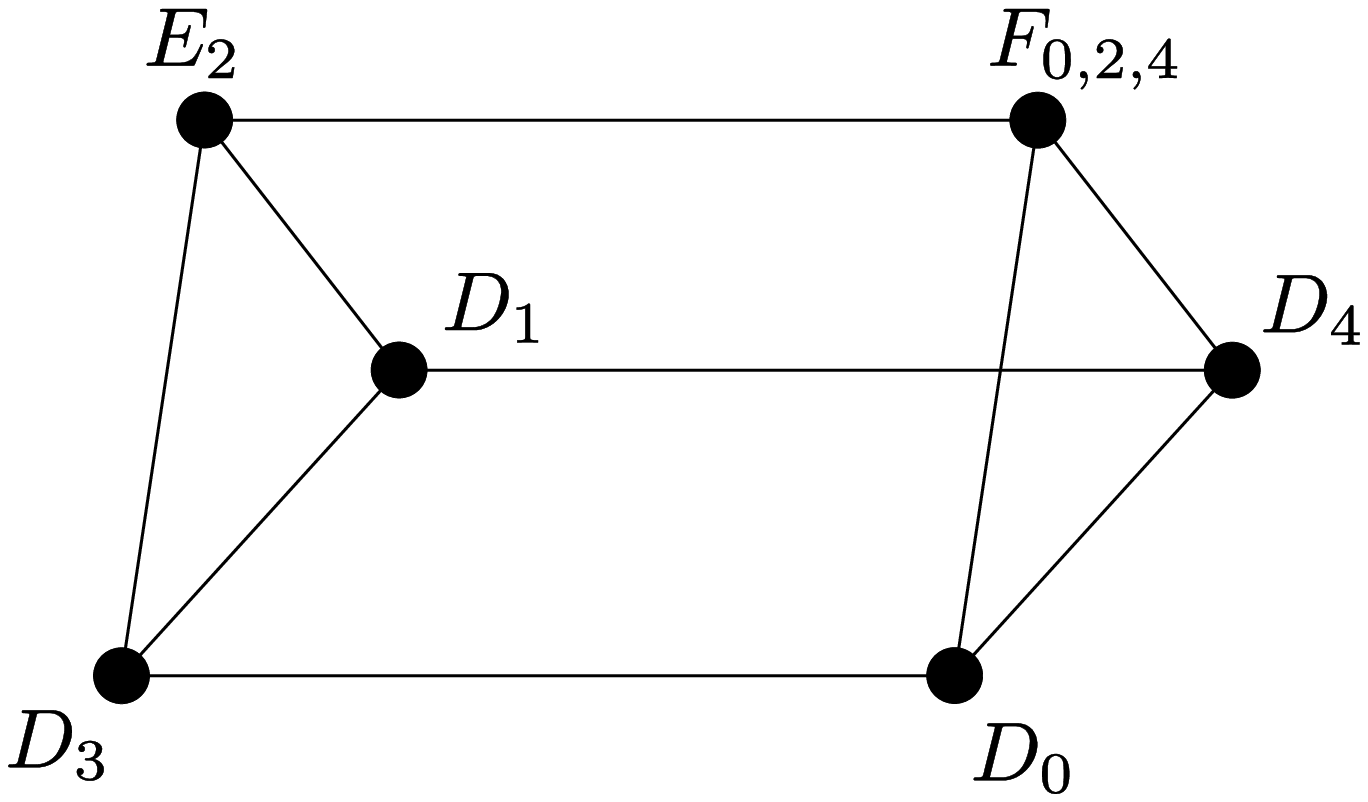}
\end{minipage}
 \begin{minipage}[c]{.5\linewidth}
\includegraphics[scale=0.2]{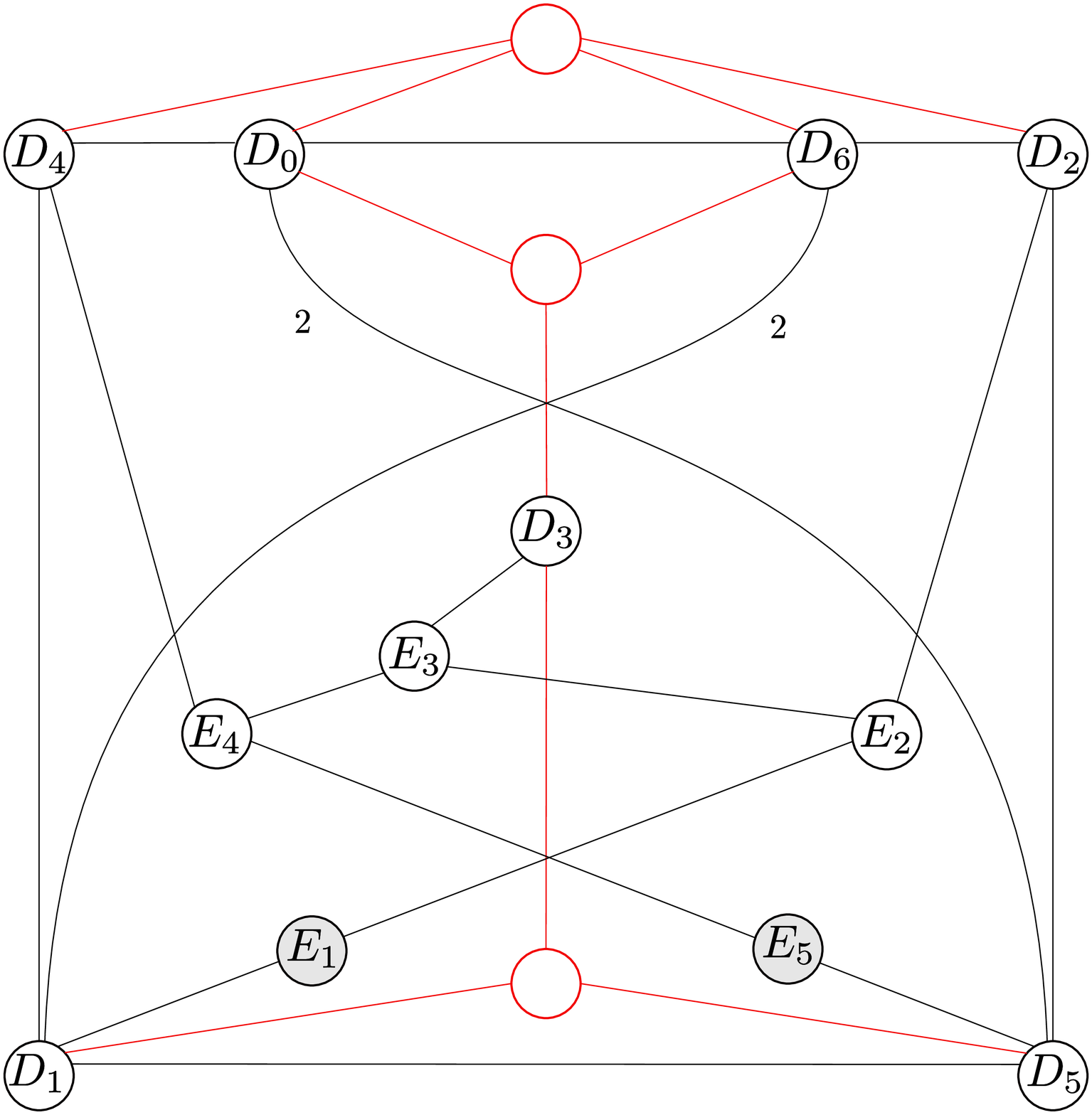}
\end{minipage}
  \caption{The Gr\"obner tropical secant graphs of the rational normal curves in
    $\pr^4$ and $\pr^6$.}
  \label{fig:RNC4,6}
\end{figure}

Using ray-shooting and walking algorithms, we see that the equation
has multidegree $(3,6)$ with respect to the lattice
$\Lambda=\ZZ\langle {\bf 1}, (0,1,2,3,4)\rangle$ and its Newton
polytope has vertices $(0, 0, 3, 0, 0)$, $(0, 1, 1, 1, 0)$, $(1, 0, 0,
2, 0)$, $(0, 2, 0, 0, 1)$, $(1, 0, 1, 0, 1)$, and $f$-vector
$(5,9,6)$. This graph is precisely the tropical discriminant of the
Veronese surface, regarded as the projectivization of the variety of
all symmetric $3\times 3$ matrices of rank at most one \citep[Example
4.4]{TropDiscr}. Its defining equation is a dehomogeneization of the
Hankel $3\times 3$-determinant.

The case $n=6$ was computed in \cite[Example 20]{BJSST}. Its defining
ideal is generated by the $3\times 3$-minors of the $4\times 4$-Hankel
matrix. After changing the signs of the rays obtained by \gfan\ (to
agree with our \emph{min} convention) and reducing modulo the lattice
$\Lambda$, we see that our construction matches theirs for all rays
except for the ones corresponding to three bivalent node $E_1, E_5$
and one that is absent in our graph from Figure~\ref{fig:RNC4,6} (nodes 2, 3 and 15 in the notation
of~\cite{BJSST}).  We follow our convention from
Example~\ref{ex:Ranestad} and keep the red nodes $F_{\aaa}$
unlabeled. All weights in our graph equal 1, except for two edges with
weight 2 (indicated in Figure~\ref{fig:RNC4,6}). From this
computation, we confirm that the Gr\"obner tropical secant graph
coincides with the tropical secant graph for $n\geq 6$.

We now use the methods from \cite{ChowTropical} for computing
the Chow polytope of the secant variety of the rational normal curve
in $\pr^6$. In this case, the polytope has 289 vertices and $f$-vector
$(289, 897, 981, 442, 71)$. All vertices have multidegree
$(30,90)$, which matches the formula $\deg ch(X)=\codim X \cdot \deg
X$ from \cite[Lemma 3.4]{ChowTropical}, since $30 = 3 \cdot
\binom{5}{2}$. 

Changing the grading to reflect the torus action by the exponent
vector $(0,1,2,\ldots, n)$ rather than the action by the all-ones
vector, gives us the weighted projective space
$\pr^{n}_{(0,1,2,\ldots, n)}$ as the ambient space, rather than the
usual $\pr^n$. We conjecture that this new setting results in the
formula ${\deg}'\, ch(Sec^1(C)) = (n-3) \cdot {\deg}'\, Sec^1(C)$
connecting the degree of $ch(Sec^1(C))$ with respect to this
exponent vector to the codimension and the degree of the secant
variety of $C$ inside this new toric
variety. 
\end{example}

We now switch gears to study the set of all tropical lines between
points in the tropicalization of a monomial projective curve. We aim
to highlight the differences between this set and the tropicalization
of the first secant variety of the same curve.  By definition, a
tropical line segment between two points in the tropical curve $\cT C$
is the loci of all points obtained as the coordinatewise minima (i.e.\
tropical addition) of two fixed points in the  classical
plane spanned
by the lattice $\Lambda=\langle {\bf 1}, (0, i_1, \ldots,
i_n)\rangle$. We interpret this as the line spanned by the vector
$(0,i_1, \ldots, i_n)$ in
$\TP\pr^n$. 
The set of all tropical lines between points in $\cT C$ is often
denoted by $S^1(\cT C)$ and it is called the \emph{first tropical
  secant variety} of the line $\cT C\subset \TP \pr^n$. Since $S^1(\cT
C)$ is the image of the tropicalization of the secant map $\phi$
from~(\ref{eq:SecantMap}), we know it is contained in the
tropicalization of the image of $\phi$, hence $S^1(\cT C)$ is
contained in $\cT Sec^1(C)$.  These two tropical sets have been
compared and their rich combinatorial structures has been studied by many,
including Develin and Draisma~\cite{DevelinTropSecant,DraismaTropSecant}. In particular,
by~\cite[Corollary 2.2]{DevelinTropSecant}, we know that $S^1(\cT C)$
is a cone from $\cT C$ over a polytopal complex, called the
\emph{first tropical secant complex} of $\cT C$. We aim to describe
this complex for the case of monomial curves.

Each point in $S^1(\cT C)\subset \RR^{n+1}$ may be thought of as a
height vector for a configuration of points $\{0, i_1, \ldots, i_n\}$
on $\RR$ which induces a regular subdivision of the convex hull
defined by these $n+1$ points.  The faces of this polytopal complex
correspond to regular subdivisions such that two facets cover all
points. These faces are ordered by refinements of the corresponding
subdivisions.  Since by assumption, our exponent vector has distinct
coordinates, the classical line $\cT C$ is \emph{generic} in the sense
of Develin, and~\cite[Theorem 3.1]{DevelinTropSecant} gives a very
nice characterization of this complex. It is precisely the set of
lower faces of the cyclic polytope $C(2,n-1)$, defined as the convex
hull of $n-1$ generic points in the parabola $\{y=x^2\}\subset \RR^2$.
It follows immediately that this complex is a chain graph with $n-1$
vertices. Figure~\ref{fig:FirstTropicalSecantComplexInP4} illustrates
this construction for a generic classical line in $\TP\pr^4$.
\begin{figure}[htb]
  \centering
  \includegraphics[scale=0.3]{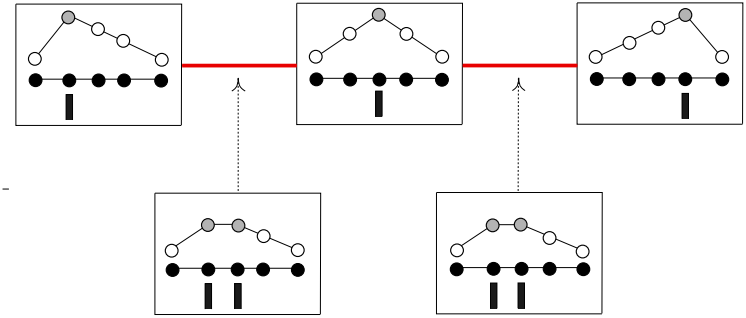} 
  \caption{The first tropical secant complex of the line $\RR\langle
    (0, i_1, i_2, i_3, i_4)\rangle$ in~$\TP\pr^4$.}
\label{fig:FirstTropicalSecantComplexInP4}
\end{figure}

\begin{proposition}
\label{pr:TropSecantComplex}
  The first tropical secant complex of the tropicalization of the monomial
  curve $(1:t^{i_1}:\ldots:t^{i_n})$ (with $0<i_1<\ldots<i_n$) is a chain graph with $n-1$
  vertices $v^{(1)}, \ldots, v^{(n-1)}$ embedded in $\RR^{n+1}$ by 
\begin{equation*}
v^{(k)}=\sum_{j\leq k} i_j\, e_j + \sum_{k<j<n}
\frac{i_k}{i_n-i_k}(i_n-i_j) \,e_j \;\qquad k=1, \ldots, n-1.
\end{equation*}
\end{proposition}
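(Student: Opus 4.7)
The plan is to apply Develin's cited theorem for the combinatorial skeleton of the first tropical secant complex, then produce explicit height-vector representatives for each of its vertices from the secant parameterization.

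First, since $0 < i_1 < \ldots < i_n$ are distinct, the classical line $\cT C$ is generic in the sense of Develin, so Theorem~3.1 of the cited Develin paper identifies the first tropical secant complex of $\cT C$ with the complex of lower faces of the cyclic polytope $C(2, n-1)$. Any $n-1$ points on the parabola $\{y = x^2\}$ have a lower hull consisting of a polyline of consecutive edges, so this complex is combinatorially a chain graph with $n-1$ vertices. Under the standard correspondence between cells of the complex and regular two-cell subdivisions of $\{0, i_1, \ldots, i_n\}$ ordered by refinement, the $k$-th vertex of the chain corresponds to the unique coarsest subdivision with a single interior breakpoint at $i_k$, for $k = 1, \ldots, n-1$, while the edge joining $v^{(k)}$ to $v^{(k+1)}$ realizes the common refinement $[0, i_k] \cup [i_k, i_{k+1}] \cup [i_{k+1}, i_n]$.

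Second, a representative of the $k$-th vertex in $S^1(\cT C) \subset \RR^{n+1}$ is any height vector $h_j = \min(A + s\, i_j, B + t\, i_j)$ inducing the above subdivision: the line $L_1 : y = A + sx$ must achieve the minimum for $j \leq k$ and $L_2 : y = B + tx$ for $j \geq k$, with the two meeting at $x = i_k$. The lineality space $\cT C$ supplies exactly two gauge degrees of freedom, corresponding to the shifts $(A,B) \mapsto (A+c, B+c)$ (which adds $c\mathbf{1}$ to $h$) and $(s,t) \mapsto (s+c, t+c)$ (which adds $c(0, i_1, \ldots, i_n)$ to $h$). I would kill these by normalizing $A = 0$, $s = 1$ so that $L_1 : y = x$ and thus $h_j = i_j$ for $j \leq k$, matching the first sum in the formula.

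Third, this leaves one free parameter along the $1$-dimensional ray, which I would pin down by setting $h_n = 0$; together with the crossing condition $B + t\, i_k = i_k$ this forces $t = -i_k/(i_n - i_k)$ and $B = i_k i_n/(i_n - i_k)$. Substituting yields, for $k < j < n$,
\[
h_j = B + t\, i_j = \frac{i_k(i_n - i_j)}{i_n - i_k},
\]
reproducing the second sum of the formula, while $h_n = 0$ matches the absence of an $e_n$-term. A final check that this height vector really induces the claimed subdivision reduces to verifying $i_j \leq i_k$ for $j \leq k$ and $i_j \geq i_k$ for $j > k$, which is immediate. The substantive step is the invocation of Develin's theorem to extract the chain-graph structure from genericity of $\cT C$; everything else will be a direct computation with the explicit secant parameterization.
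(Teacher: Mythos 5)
Your proposal is correct and takes essentially the same route as the paper: the chain structure is obtained from Develin's Theorem~3.1 via genericity of $\cT C$, and each vertex $v^{(k)}$ is then realized as the height vector of the two-facet subdivision with breakpoint $i_k$, normalized so that $0$ and $i_n$ get height zero and $i_k$ gets height $i_k$. Your gauge-fixing computation simply makes explicit the normalization the paper states without detail.
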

Note that $v^{(k)}$ corresponds to the regular subdivision of the
configuration  $\{0, i_1, \ldots, i_n\}$ with exactly two
facets $\{0, i_1, \ldots, i_k\}$ and $\{i_{k}, \ldots, i_n\}$. It is
embedded as a height vector, where the points 0 and $i_n$ have height
zero, the point $i_k$ has height $i_k$ and the remaining points lie in
the interior of these two facets.

\begin{example}
  We describe the first secant complex of the curve
  $(1:t^{30}:t^{45}:t^{55}:t^{78})$, as shown in
  Figure~\ref{fig:FirstTropicalSecantComplexInP4}. It consists of
  three nodes $v^{(1)}= (0, 30, \frac{165}{8}, \frac{115}{8}, 0)$,
  $v^{(2)}=(0, 30, 45, \frac{345}{11},0)$ and $v^{(3)}=(0, 30, 45,
  55,0)$ and two edges $v^{(1)}v^{(2)}$ and $v^{(2)}v^{(3)}$. By
  taking the cone from the linear space $\cT C$ over this complex,  we
  get the first tropical secant variety of the line $\RR\langle (0,
  30, 45, 55, 78)\rangle$.
\end{example}
We now describe $S^1(\cT C)$ as a subgraph of the tropical secant
graph, and we show that the containment of $S^1(\cT C)$ in $\cT
Sec^1(C)$ for a monomial curve $C$ is strict in
general. 
\begin{proposition}\label{pr:tropSecantVsTropicalizedSecant} In the
  notation of Definition~\ref{def:tropSecantGraph}, the first tropical
  secant complex of the tropicalization of the monomial curve
  $(1:t^{i_1}:\ldots:t^{i_n})$ is the \emph{chain subgraph} of the
  tropical secant graph 
  with nodes $E_{i_1}, \ldots,
  E_{i_{n-1}}$.
\end{proposition}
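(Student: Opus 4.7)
The plan is to show that, although the representatives $v^{(1)},\ldots,v^{(n-1)}$ of the first tropical secant complex given by Proposition~\ref{pr:TropSecantComplex} are genuinely different in $\RR^{n+1}$ from the nodes $E_{i_1},\ldots,E_{i_{n-1}}$ of the tropical secant graph, they span the same positive rays modulo the lineality lattice $\cT C = \RR\langle \mathbf{1},(0,i_1,\ldots,i_n)\rangle$. Since both objects sit inside $\cT \Sec^1(C)$, whose lineality is exactly $\cT C$, this ray-level identification transports directly to an equality of graphs in the spherical complex $(\cT \Sec^1(C)/\cT C) \cap \Sn^{n-2}$.

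The central step is to establish the explicit identity
\[
v^{(k)} \;=\; \frac{i_n}{i_n-i_k}\,E_{i_k} \;-\; \frac{i_k}{i_n-i_k}\,(0,i_1,\ldots,i_n), \qquad 1\le k\le n-1.
\]
I would verify this coordinate by coordinate: at indices $0$ and $n$ both sides vanish; at each index $1\le j\le k$ the right-hand side collapses via $(i_n - i_k)^{-1}(i_n i_j - i_k i_j)=i_j$ to the value prescribed by Proposition~\ref{pr:TropSecantComplex}; and at each index $k<j<n$ it reduces to $i_k(i_n-i_j)/(i_n-i_k)$, matching the formula there as well. Because $i_n/(i_n - i_k) > 0$ and $(0,i_1,\ldots,i_n) \in \cT C$, this identity shows that $v^{(k)} \equiv \lambda_k\, E_{i_k} \pmod{\cT C}$ with $\lambda_k > 0$, so $v^{(k)}$ and $E_{i_k}$ project to the same point of $(\cT\Sec^1(C)/\cT C) \cap \Sn^{n-2}$.

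For the edges, Proposition~\ref{pr:TropSecantComplex} tells us that the only edges of the first tropical secant complex are the consecutive pairs $v^{(k)}v^{(k+1)}$ ($1\le k\le n-2$). Because $v^{(k)}$ and $E_{i_k}$ lie on the same ray modulo $\cT C$, the two-dimensional cones $\cT C + \RR_{\ge 0}\langle v^{(k)},v^{(k+1)}\rangle$ and $\cT C + \RR_{\ge 0}\langle E_{i_k},E_{i_{k+1}}\rangle$ coincide in $\RR^{n+1}$. Intersecting with $\Sn^{n-2}$ then identifies the edge $v^{(k)}v^{(k+1)}$ with the edge $E_{i_k}E_{i_{k+1}}$ of the tropical secant graph, which is present by Definition~\ref{def:tropSecantGraph}(i); concatenating these identifications for $k=1,\ldots,n-2$ yields the chain subgraph claimed in the statement. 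The main (and essentially only) obstacle is discovering the exact coefficients in the displayed identity, but once one notices that $v^{(k)}$ has vanishing last coordinate while $E_{i_k}$ has last coordinate $i_k$, the coefficient of $(0,i_1,\ldots,i_n)$ is essentially forced and the rest of the verification is mechanical.
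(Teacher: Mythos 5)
Your proposal is correct and follows essentially the same route as the paper: the paper's proof consists precisely of the identity $v^{(k)} = \frac{-i_k}{i_n-i_k}(0,i_1,\ldots,i_n) + \frac{i_n}{i_n-i_k}E_{i_k}$, from which it concludes that $v^{(k)}$ and $E_{i_k}$ span the same ray modulo $\cT C$ and the result ``follows immediately.'' Your coordinate-by-coordinate verification and the explicit identification of the edges $v^{(k)}v^{(k+1)}$ with the edges $E_{i_k}E_{i_{k+1}}$ of Definition~\ref{def:tropSecantGraph} simply spell out the details the paper leaves implicit.
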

\begin{proof}
  In the notation of Proposition~\ref{pr:TropSecantComplex}, 
 $v^{(k)}$ and $E_{i_k}$ generate the same ray in $\RR\otimes
  \Lambda$ because
\[
v^{(k)} = \frac{-i_k}{i_n-i_k}\cdot (0, i_1, \ldots, i_n) +
\frac{i_n}{i_n-i_k} \cdot E_{i_k}\qquad \qquad \text{for}\;  k=1, \ldots, n-1.
\] The result follows immediately.
\end{proof}

We finish this section by discussing briefly the relationship between
our tropical secant surface graph and compactifications of toric
arrangements. As we saw in Section~\ref{sec:comb-monom-curv}, the
geometric tropicalization of surfaces involves finding a suitable
compactification (a \emph{tropical compactification}) of a parametric
surface inside the torus $\TP^{n+1}$, or, equivalently, of the
complement of $n+1$ divisors in the torus $\TP^2$.  In our setting,
these divisors were the $n+1$ binomial curves $\{w^{i_j}-\lambda=0\}$
($0\leq j \leq n$).  It is well-known that this toric arrangement can
be compactified by a wonderful model in the sense of De
Concini-Procesi~\cite{Wonderful}. Recently, L.\ Moci~\cite{MociPaper}
gave an explicit compactification for toric arrangements and, at first
glance, his techniques are very similar to the ones we discussed in
Section~\ref{sec:comb-monom-curv}. We hope to clarify this connection
more explicit in the near future.


\section*{Acknowledgments}
\label{sec:acknowledgements}

We wish to thank Bernd Sturmfels for suggesting this project to us.
We also thank Melody Chan, Alex Fink and Eric Katz for fruitful
conversations. Special thanks go to Jenia Tevelev for discussions with
the first author on geometric tropicalization, which led to
Theorem~\ref{thm:GT}.


\bibliographystyle{abbrvnat}
\bibliography{bibliography}

\begin{thebibliography}{21}
\providecommand{\natexlab}[1]{#1}
\providecommand{\url}[1]{\texttt{#1}}
\expandafter\ifx\csname urlstyle\endcsname\relax
  \providecommand{\doi}[1]{doi: #1}\else
  \providecommand{\doi}{doi: \begingroup \urlstyle{rm}\Url}\fi

\bibitem[Bieri and Groves(1984)]{BG}
R.~Bieri and J.~Groves.
\newblock The geometry of the set of characters induced by valuations.
\newblock \emph{J. Reine Angew. Math.}, 347:\penalty0 168--195, 1984.
\newblock ISSN 0075-4102.

\bibitem[Bogart et~al.(2007)Bogart, Jensen, Speyer, Sturmfels, and
  Thomas]{BJSST}
T.~Bogart, A.~N. Jensen, D.~Speyer, B.~Sturmfels, and R.~R. Thomas.
\newblock Computing tropical varieties.
\newblock \emph{J. Symbolic Comput.}, 42\penalty0 (1-2):\penalty0 54--73, 2007.
\newblock ISSN 0747-7171.

\bibitem[Catalano-Johnson(1996)]{HankelSecants}
M.~L. Catalano-Johnson.
\newblock The possible dimensions of the higher secant varieties.
\newblock \emph{Amer. J. Math.}, 118\penalty0 (2):\penalty0 355--361, 1996.
\newblock ISSN 0002-9327.

\bibitem[Conca(1998)]{Conca}
A.~Conca.
\newblock Straightening law and powers of determinantal ideals of {H}ankel
  matrices.
\newblock \emph{Adv. Math.}, 138\penalty0 (2):\penalty0 263--292, 1998.
\newblock ISSN 0001-8708.

\bibitem[Cox and Sidman(2007)]{CoxSidman07}
D.~Cox and J.~Sidman.
\newblock Secant varieties of toric varieties.
\newblock \emph{J. Pure Appl. Algebra}, 209\penalty0 (3):\penalty0 651--669,
  2007.
\newblock ISSN 0022-4049.

\bibitem[Cueto(2011)]{Implicitizationsurfaces}
M.~A. Cueto.
\newblock Implicitization of surfaces via geometric tropicalization.
\newblock \url{arXiv:1105.0509}, 2011.

\bibitem[Cueto et~al.(2010)Cueto, Tobis, and Yu]{Mega09}
M.~A. Cueto, E.~A. Tobis, and J.~Yu.
\newblock An implicitization challenge for binary factor analysis.
\newblock \emph{J. Symbolic Comput.}, 45\penalty0 (12):\penalty0 1296--1315,
  2010.
\newblock ISSN 0747-7171.

\bibitem[De~Concini and Procesi(1995)]{Wonderful}
C.~De~Concini and C.~Procesi.
\newblock Wonderful models of subspace arrangements.
\newblock \emph{Selecta Math. (N.S.)}, 1\penalty0 (3):\penalty0 459--494, 1995.
\newblock ISSN 1022-1824.

\bibitem[Develin(2006)]{DevelinTropSecant}
M.~Develin.
\newblock Tropical secant varieties of linear spaces.
\newblock \emph{Discrete Comput. Geom.}, 35\penalty0 (1):\penalty0 117--129,
  2006.
\newblock ISSN 0179-5376.

\bibitem[Dickenstein et~al.(2007)Dickenstein, Feichtner, and
  Sturmfels]{TropDiscr}
A.~Dickenstein, E.~M. Feichtner, and B.~Sturmfels.
\newblock Tropical discriminants.
\newblock \emph{J. Amer. Math. Soc.}, 20\penalty0 (4):\penalty0 1111--1133
  (electronic), 2007.
\newblock ISSN 0894-0347.

\bibitem[Draisma(2008)]{DraismaTropSecant}
J.~Draisma.
\newblock A tropical approach to secant dimensions.
\newblock \emph{J. Pure Appl. Algebra}, 212\penalty0 (2):\penalty0 349--363,
  2008.
\newblock ISSN 0022-4049.

\bibitem[Dvornicich and Zannier(2009)]{SchurFunctions}
R.~Dvornicich and U.~Zannier.
\newblock Newton functions generating symmetric fields and irreducibility of
  {S}chur polynomials.
\newblock \emph{Advances in Mathematics}, 222\penalty0 (6):\penalty0
  1982--2003, 2009.
\newblock ISSN 0001-8708.

\bibitem[Einsiedler et~al.(2006)Einsiedler, Kapranov, and Lind]{EKL}
M.~Einsiedler, M.~Kapranov, and D.~Lind.
\newblock Non-{A}rchimedean amoebas and tropical varieties.
\newblock \emph{J. Reine Angew. Math.}, 601:\penalty0 139--157, 2006.
\newblock ISSN 0075-4102.

\bibitem[Fink(2010)]{ChowTropical}
A.~Fink.
\newblock Tropical cycles and {C}how polytopes.
\newblock \url{arXiv:1001.4784v2}, 2010.

\bibitem[Hacking et~al.(2009)Hacking, Keel, and Tevelev]{HKT}
P.~Hacking, S.~Keel, and J.~Tevelev.
\newblock Stable pair, tropical, and log canonical compactifications of moduli
  spaces of del {P}ezzo surfaces.
\newblock \emph{Invent. Math.}, 178\penalty0 (1):\penalty0 173--227, 2009.
\newblock ISSN 0020-9910.

\bibitem[Harris(1995)]{Harris}
J.~Harris.
\newblock \emph{Algebraic geometry, A first course}, volume 133 of
  \emph{Graduate Texts in Mathematics}.
\newblock Springer-Verlag, New York, 1995.
\newblock ISBN 0-387-97716-3.
\newblock Corrected reprint of the 1992 original.

\bibitem[Moci(2011)]{MociPaper}
L.~Moci.
\newblock Wonderful models for toric arrangements.
\newblock In press, {I}nt.\ {M}ath.\ {R}es.\ {N}ot., \url{doi:
  10.1093/imrn/rnr016}, 2011.

\bibitem[Ranestad(2006)]{SecantMonCurves}
K.~Ranestad.
\newblock The degree of the secant variety and the join of monomial curves.
\newblock \emph{Collect. Math.}, 57\penalty0 (1):\penalty0 27--41, 2006.
\newblock ISSN 0010-0757.

\bibitem[Sturmfels and Tevelev(2008)]{ElimTheory}
B.~Sturmfels and J.~Tevelev.
\newblock Elimination theory for tropical varieties.
\newblock \emph{Math. Res. Lett.}, 15\penalty0 (3):\penalty0 543--562, 2008.
\newblock ISSN 1073-2780.

\bibitem[Sturmfels et~al.(2007)Sturmfels, Tevelev, and
  Yu]{NPofImplicitEquation}
B.~Sturmfels, J.~Tevelev, and J.~Yu.
\newblock The {N}ewton polytope of the implicit equation.
\newblock \emph{Mosc. Math. J.}, 7\penalty0 (2):\penalty0 327--346, 351, 2007.
\newblock ISSN 1609-3321.

\bibitem[Valla(1980/81)]{Valla}
G.~Valla.
\newblock On determinantal ideals which are set-theoretic complete
  intersections.
\newblock \emph{Compositio Math.}, 42\penalty0 (1):\penalty0 3--11, 1980/81.
\newblock ISSN 0010-437X.

\end{thebibliography}
\label{sec:biblio}

\bigskip

\end{document}

